\DeclareMathOperator{\Lie}{Lie}
\DeclareMathOperator{\Char}{char}
\tikzset{
    labl/.style={anchor=south, rotate=90, inner sep=.5mm}
}
\begin{document}

\newcounter{rownum}
\setcounter{rownum}{0}
\newcommand{\ab}{\addtocounter{rownum}{1}\arabic{rownum}}

\newcommand{\x}{$\times$}
\newcommand{\bb}{\mathbf}
\newcommand{\UU}{\mathscr{U}}
\newcommand{\CC}{\mathscr{C}}
\newcommand{\GG}{\mathscr{G}}
\newcommand{\BB}{\mathscr{B}}
\newcommand{\NN}{\mathscr{N}}
\newcommand{\XX}{\mathscr{X}}

\newcommand{\Ind}{\mathrm{Ind}}
\newcommand{\R}{\mathrm{R}}
\newcommand{\RR}{\mathscr{R}}
\providecommand{\G}{\mathscr{G}}
\newcommand{\hra}{\hookrightarrow}
\newcommand{\sss}{\mathrm{ss}}

\numberwithin{equation}{section}
\numberwithin{figure}{section}

\newtheorem{lemma}{Lemma}[section]
\newtheorem{theorem}[lemma]{Theorem}
\newtheorem*{TA}{Theorem A}
\newtheorem*{TB}{Theorem B}
\newtheorem*{TC}{Theorem C}
\newtheorem*{CorC}{Corollary C}
\newtheorem*{TD}{Theorem D}
\newtheorem*{TE}{Theorem E}
\newtheorem*{PF}{Proposition E}
\newtheorem*{C3}{Corollary 3}
\newtheorem*{T4}{Theorem 4}
\newtheorem*{C5}{Corollary 5}
\newtheorem*{C6}{Corollary 6}
\newtheorem*{C7}{Corollary 7}
\newtheorem*{C8}{Corollary 8}
\newtheorem*{claim}{Claim}
\newtheorem{cor}[lemma]{Corollary}
\newtheorem{conjecture}[lemma]{Conjecture}
\newtheorem{prop}[lemma]{Proposition}
\newtheorem{question}[lemma]{Question}
\theoremstyle{definition}
\newtheorem{example}[lemma]{Example}
\newtheorem*{idea}{Basic Idea}
\newtheorem{examples}[lemma]{Examples}
\newtheorem{algorithm}[lemma]{Algorithm}
\newtheorem*{algorithm*}{Algorithm}
\theoremstyle{remark}
\newtheorem{remark}[lemma]{Remark}
\newtheorem{remarks}[lemma]{Remarks}
\newtheorem{obs}[lemma]{Observation}
\theoremstyle{definition}
\newtheorem{defn}[lemma]{Definition}

  \def\hal{\unskip\nobreak\hfil\penalty50\hskip10pt\hbox{}\nobreak
  \hfill\vrule height 5pt width 6pt depth 1pt\par\vskip 2mm}

\renewcommand{\labelenumi}{(\roman{enumi})}
\newcommand{\Hom}{\mathrm{Hom}}
\newcommand{\Int}{\mathrm{int}}
\newcommand{\Ext}{\mathrm{Ext}}
\newcommand{\opH}{\mathrm{H}}
\newcommand{\D}{\mathscr{D}}
\newcommand{\soc}{\mathrm{Soc}}
\newcommand{\SO}{\mathrm{SO}}
\newcommand{\Sp}{\mathrm{Sp}}
\newcommand{\SL}{\mathrm{SL}}
\newcommand{\GL}{\mathrm{GL}}
\newcommand{\PGL}{\mathrm{PGL}}
\newcommand{\OO}{\mathcal{O}}
\newcommand{\Y}{\mathbf{Y}}
\newcommand{\QQ}{\mathsf{Q}}
\newcommand{\X}{\mathbf{X}}
\newcommand{\diag}{\mathrm{diag}}
\newcommand{\End}{\mathrm{End}}
\newcommand{\tr}{\mathrm{tr}}
\newcommand{\Stab}{\mathrm{Stab}}
\newcommand{\red}{\mathrm{red}}
\newcommand{\Aut}{\mathrm{Aut}}
\renewcommand{\H}{\mathcal{H}}
\renewcommand{\u}{\mathfrak{u}}
\newcommand{\Ad}{\mathrm{Ad}}
\newcommand{\N}{\mathcal{N}}
\newcommand{\Z}{\mathbb{Z}}
\newcommand{\la}{\langle}\newcommand{\ra}{\rangle}
\newcommand{\gl}{\mathfrak{gl}}
\newcommand{\g}{\mathfrak{g}}
\newcommand{\F}{\mathbb{F}}
\newcommand{\m}{\mathfrak{m}}
\renewcommand{\b}{\mathfrak{b}}
\newcommand{\p}{\mathfrak{p}}
\newcommand{\q}{\mathfrak{q}}
\renewcommand{\l}{\mathfrak{l}}
\newcommand{\del}{\partial}
\newcommand{\h}{\mathfrak{h}}
\renewcommand{\t}{\mathfrak{t}}
\renewcommand{\k}{\mathfrak{k}}
\newcommand{\Gm}{\mathbb{G}_m}
\renewcommand{\c}{\mathfrak{c}}
\renewcommand{\r}{\mathfrak{r}}
\newcommand{\n}{\mathfrak{n}}
\newcommand{\s}{\mathfrak{s}}
\newcommand{\Q}{\mathbb{Q}}
\newcommand{\z}{\mathfrak{z}}
\newcommand{\pso}{\mathfrak{pso}}
\newcommand{\so}{\mathfrak{so}}
\renewcommand{\sl}{\mathfrak{sl}}
\newcommand{\psl}{\mathfrak{psl}}
\renewcommand{\sp}{\mathfrak{sp}}
\newcommand{\Ga}{\mathbb{G}_a}
\newcommand{\barB}{\overline{B}}
\newcommand{\barm}{\overline{\mathfrak{m}}}
\newcommand{\MM}{\mathsf{M}}
\newcommand{\LL}{\mathsf{L}}

\newenvironment{changemargin}[1]{%
  \begin{list}{}{%
    \setlength{\topsep}{0pt}%
    \setlength{\topmargin}{#1}%
    \setlength{\listparindent}{\parindent}%
    \setlength{\itemindent}{\parindent}%
    \setlength{\parsep}{\parskip}%
  }%
  \item[]}{\end{list}}

\parindent=0pt
\addtolength{\parskip}{0.5\baselineskip}

\subjclass[2010]{20G15}
\title{A construction of pseudo-reductive groups with non-reduced root systems}

\author[M.\  Bate]{Michael Bate}
\address
{Department of Mathematics,
University of York,
York YO10 5DD,
United Kingdom}
\email{michael.bate@york.ac.uk}
\author[G. R\"ohrle]{Gerhard R\"ohrle}
\address
{Fakult\"at f\"ur Mathematik,
Ruhr-Universit\"at Bochum,
D-44780 Bochum, Germany}
\email{gerhard.roehrle@rub.de}
\author[D.\ Sercombe]{Damian Sercombe}
\address
{Fakult\"at f\"ur Mathematik,
Ruhr-Universit\"at Bochum,
D-44780 Bochum, Germany}
\email{Damian.Sercombe@ruhr-uni-bochum.de}
\author{David I. Stewart}
\address{Department of Mathematics,
Alan Turing Building,
Manchester,
M13 9PL, UK}
\email{david.i.stewart@manchester.ac.uk}

\pagestyle{plain}
\begin{abstract}
We describe a straightforward construction of the pseudo-split absolutely pseudo-simple groups of minimal type with irreducible root systems of type $BC_n$; these exist only in characteristic $2$. We also give a formula for the dimensions of their irreducible modules.
\end{abstract}
\maketitle

\section{Introduction}
Let $k$ be an arbitrary field, and $G$ a smooth connected affine $k$-group which is pseudo-reductive; that is, the largest connected smooth normal unipotent $k$-subgroup $\RR_{u,k}(G)$ of $G$ is trivial. The classification of $G$ up to isomorphism in \cite{CGP15} and \cite{CP17} becomes exceedingly intricate when $k$ has characteristic $2$---which to a large extent can be traced back to degeneracies in commutator relations in split reductive $k$-groups. The purpose of this note is to simplify one aspect of the characteristic $2$ theory by giving a brief construction of the pseudo-split absolutely pseudo-simple groups of minimal type with root system of type $BC_n$. We generate these groups inside the Weil restrictions of rather special semidirect products with a property which only exists in characteristic $2$. The upshot is that we are able to establish the existence part of the classification theorem of such groups \cite[Thm.~9.8.6]{CGP15} without recourse to Weil's birational group laws. 

The structure of this paper is as follows. 

In Section \ref{setup}, we give a complete description of semidirect products $V\rtimes H$ of a split simple $k$-group $H$ and an irreducible $H$-module $V$ such that $V\rtimes H$ has a split simple subgroup $M$ with $MV=HV$ but 
the projection $M\to H$ is not an
isomorphism. This situation arises only in characteristic $2$. 

In Section \ref{defng} we expand on a method in \cite[\S9.1]{CGP15} that exhibits interesting pseudo-reductive subgroups of the Weil restrictions of $\SL_2$ and $\PGL_2$ across non-trivial purely inseparable extensions. We generate certain maximal rank subgroups of the Weil restrictions of the special semidirect products as described in Section \ref{setup}, whose root systems are of type $BC_n$. In more detail: we first generate an abstract subgroup of $k$-points from some specified root groups, together with the $k$-points of the normaliser of a Cartan subgroup. We observe that this subgroup admits a $(B,N)$-pair, in the terminology of Tits. We then compare the `big cells' of the abstract group and its closure, which turns out to be enough to determine the root groups of its closure and conclude the abstract group \emph{is} the set of $k$-points of its closure. Then we deviate a little from \cite{CGP15} and use some standard results about groups with $(B,N)$-pairs to infer pseudo-reductivity or absolute pseudo-simplicity of these groups. (The $n=2$ case requires extra work.) 

Finally in Section \ref{irreps}, we give a formula for the dimensions of the irreducible modules of the groups of interest; this is explicit if $n=1$ or $2$, and otherwise reduces the problem to calculating the dimensions of the irreducible modules of $\Sp_{2n}$ in characteristic $2$. We also highlight Theorem \ref{prodsofrk1}, which gives a formula for the dimensions of an irreducible module of a pseudo-reductive group that is a product of commutative pseudo-split groups of rank $1$, in terms of the weight and the minimal fields of definition of the geometric unipotent radicals of the factors.

\subsection*{Acknowledgements} We thank the referees for alerting us to some errors in the first version and suggesting improvements to the exposition. The fourth author is supported by a Leverhulme Trust Research
Project Grant RPG-2021-080. 

\section{Set-up}\label{setup}
Let $k$ be a field and $G$ a smooth affine $k$-group (scheme); so $G$ is a functor from $k$-algebras to groups, represented by a finitely presented $k$-algebra $k[G]$. We say $G$ acts on an affine $k$-scheme $S$ if there is a natural transformation of functors $\alpha:G\times S\to S$ such that $\alpha(A):G(A)\times S(A)\to S(A)$ is a group action for any $k$-algebra $A$. All actions will be left actions. From a finite-dimensional $k$-vector space $V$ one gets a $k$-vector group $\underline{V}$ with $\underline{V}(A)=A\otimes_k V$. By a module for $G$, we mean a $k$-vector space $V$ and an action of $G$ on $\underline{V}$ such that for each $k$-algebra $A$, the group of $A$-points $G(A)$ acts $A$-linearly on $\underline{V}(A)$. If $V$ is a finite-dimensional $G$-module then one can form the semidirect product $\underline{V}\rtimes G$---another $k$-group; we abuse notation and write $V\rtimes G$. If $G$ is generated by the smooth closed subgroups $N$ and $H$, with $N$ normal in $G$ and $N\cap H=1$ (scheme-theoretic intersection), then $G\cong N\rtimes H$ where $H$ acts on $N$ by conjugation. We write ${}^hn=hnh^{-1}$ for $h\in H(A)$, $n\in N(A)$.

\subsection{Abstract complements in semidirect products}
In this subsection and the next we establish the following:

\begin{prop}\label{sdpprop} Let $V = k^{2n}$ be the natural $\Sp_{2n}$-module and let $\rho:V \rtimes \Sp_{2n} \to \Sp_{2n}$ be the natural projection. If $\Char(k)=2$ then there exists a $k$-subgroup $\SO_{2n+1} \hookrightarrow V \rtimes \Sp_{2n}$ that maps onto $\Sp_{2n}$ via $\rho$. This construction is unique in the following sense.

Let $L$ be a split simple $k$-group, let $U$ be an irreducible $L$-module and let $M$ be a split simple $k$-subgroup of $U\rtimes L$ such that the natural projection $\pi:U \rtimes L \to L$ sends $M$ onto $L$. Then either $M \cong L$ via $\pi$, or $\Char(k)=2$ and there exists an isomorphism $\psi:U\rtimes L \to V \rtimes \Sp_{2n}$ for some $n \in \mathbb N$ such that the following diagram commutes:
\[
\begin{tikzcd}
M \arrow[hookrightarrow]{r}{} \ar[d, "\sim" labl, "\psi|_M"] & U \rtimes L \arrow{r}{\pi} \ar[d, "\sim" labl, "\psi"] & L \ar[d, "\sim" labl, "\psi|_L"]  \\
\SO_{2n+1} \arrow[hookrightarrow]{r}{} & V \rtimes \Sp_{2n} \arrow{r}{\rho} & \Sp_{2n}
\end{tikzcd}
\]
The inclusion of $\SO_{2n+1}$ is the one specified above.
\end{prop}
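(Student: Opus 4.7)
\textit{Existence.} I would realize $\SO_{2n+1}$ as the stabiliser in $\GL_{2n+1}$ of the non-degenerate quadratic form $q = x_0^2 + \sum_{i=1}^n x_i y_i$ on $V' = k^{2n+1}$, with basis $e_0$ together with a basis $\{e_i, f_i\}_{i=1}^n$ of the natural $2n$-dimensional symplectic $\Sp_{2n}$-module $V$. In characteristic $2$ the polarisation $b_q$ of $q$ has one-dimensional radical $ke_0$, and $q(e_0) = 1$ forces every $g \in \SO_{2n+1}(A)$ to fix $e_0$ pointwise; relative to the decomposition $V' = ke_0 \oplus V$ we therefore have
\[
g = \begin{pmatrix} 1 & \nu(g) \\ 0 & \bar g \end{pmatrix}, \qquad \bar g \in \Sp_{2n},\ \nu(g) \in V^*.
\]
Using the $\Sp_{2n}$-equivariant isomorphism $\sharp\colon V^* \xrightarrow{\sim} V$ induced by the symplectic form, I would set $\phi(g) := (\nu(g^{-1})^{\sharp}, \bar g) \in V \rtimes \Sp_{2n}$ (the $g^{-1}$ is there to align with the direction in which upper-triangular matrix products compound the first-row data). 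A direct matrix calculation confirms the cocycle property, so $\phi$ is a $k$-group homomorphism with $\rho \circ \phi$ equal to the very special isogeny $\SO_{2n+1} \to \Sp_{2n}$. Its restriction to the infinitesimal kernel $\alpha_2^{2n}$ of $\rho \circ \phi$ coincides with the canonical inclusion $V[F] \hookrightarrow V$, so $\phi$ is a closed immersion.

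\textit{Uniqueness.} Given $M \hookrightarrow U \rtimes L$ as in the hypotheses, set $N := M \cap U$ (scheme-theoretic intersection). The hypothesis $\pi(M) = L$ makes $\pi|_M\colon M \twoheadrightarrow L$ a surjection of split simple $k$-groups, hence an isogeny with kernel $N$; as a closed subgroup scheme of the vector $k$-group $U$, $N$ is unipotent. If $N = 1$ then $\pi|_M$ is an isomorphism and we are in the first case. Otherwise $\pi|_M$ is an inseparable isogeny of split simple $k$-groups with unipotent kernel. The classification of such isogenies (see, e.g., \cite[App.\ A]{CGP15} or Steinberg's Lectures) shows that, after stripping off central components---whose kernels are of multiplicative type and would contradict the unipotence of $N$---$\pi|_M$ must be a very special isogeny; these exist only in characteristic $2$ for types $B_n/C_n$ or $F_4$, and in characteristic $3$ for $G_2$. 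The additional constraint that $N$ embeds $L$-equivariantly into the \emph{irreducible} $L$-module $U$ is then very restrictive: $\Lie(N)$ is a non-zero $L$-submodule of the irreducible $U$, forcing $\Lie(N) = U$ and hence $N \supseteq U[F]$; combined with the order of $N$ being determined by the isogeny type, this pins down the possibilities. A case-by-case check---the main technical obstacle---eliminates the $C_n \to B_n^{(2)}$ direction (either via a toral contribution to the kernel in low rank, or via dimension mismatch with small irreducibles when $n \geq 2$) and likewise the $F_4$ and $G_2$ cases (the $L$-equivariant inclusion of $\Lie(N)$ into the candidate irreducible fails to extend to a $1$-cocycle $M \to U$), leaving $\pi|_M\colon \SO_{2n+1} \twoheadrightarrow \Sp_{2n}$ with $U \cong V$ as the only possibility.

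\textit{Building $\psi$.} With $M \cong \SO_{2n+1}$, $L \cong \Sp_{2n}$ and $U \cong V$ identified, I would choose compatible $k$-group isomorphisms $\psi|_L$ and $\psi|_U$ and assemble them into $\psi \colon U \rtimes L \xrightarrow{\sim} V \rtimes \Sp_{2n}$. Because the $L$-equivariant embedding $N \hookrightarrow V$ determines $\psi|_U$ up to an inner automorphism of $V \rtimes \Sp_{2n}$ normalising $\SO_{2n+1}$, $\psi$ can be arranged so that $\psi(M)$ is exactly the image of the embedding constructed in the existence part, whence the diagram commutes.
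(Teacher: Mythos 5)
Your existence construction is in essence the paper's own (the paper embeds $M$ into the stabiliser of the radical line of the polarisation and reads off the Levi decomposition $\RR_u(P)\rtimes\GL(U)$; your block matrices and the cocycle $\nu$ make the same map explicit), but the way you set it up has a characteristic-$2$ pitfall. If $\SO_{2n+1}$ is taken literally as the scheme-theoretic stabiliser of $q$ in $\GL_{2n+1}$, that group scheme is not smooth in characteristic $2$ and it has $A$-points that do not fix $e_0$: the $\mu_2$ of scalings $e_0\mapsto\lambda e_0$ with $\lambda^2=1$ lies in it, and your inference ``$q(e_0)=1$ forces $\lambda=1$'' fails over non-reduced $A$. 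The block form does hold for the split smooth simple group of type $B_n$, but the correct reason is that a semisimple group has no nontrivial characters, so its action on the radical line is trivial --- which is how the paper argues. This is fixable and not the main problem.

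The genuine gaps are in the uniqueness half. First, your identification of $\pi|_M$ is only sketched: the claim that an isogeny with non-central unipotent kernel must be a very special isogeny (rather than some composite) is itself what needs proof, and the elimination of the $C_n\to B_n$, $F_4$ and $G_2$ possibilities is explicitly deferred to ``a case-by-case check --- the main technical obstacle'' with reasons that are vague or off target (the uniform and correct reason is that the kernels of the very special isogenies out of $\Sp_{2n}$, $F_4$ and $G_2$ contain a nontrivial toral part in every rank, so they cannot embed into the vector group $U$; the ``fails to extend to a $1$-cocycle'' remark is not an argument). The paper bypasses all of this by citing Prasad--Yu and Vasiu: $\SO_{2n+1}$ in characteristic $2$ is the only split simple group with a nontrivial normal unipotent subgroup scheme, and that subgroup is unique ($\alpha_2^{2n}$, the short-root kernel); the uniqueness is also what identifies the $D$-weights of $U$ with the short roots and hence $U$ with the natural module. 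Second, and more seriously, your final paragraph asserts rather than proves that $\psi$ can be arranged with $\psi(M)$ equal to the standard copy of $\SO_{2n+1}$. Knowing $M\cong\SO_{2n+1}$, $L\cong\Sp_{2n}$ and $U\cong V$ does not yet rule out $M$ sitting in $U\rtimes L$ in a non-standard position with the same $N=U[F]$ (for instance, all $U(k)$-conjugates of $M$ contain the same $N$), and the sentence about $N\hookrightarrow V$ determining $\psi|_U$ ``up to an inner automorphism normalising $\SO_{2n+1}$'' does not address this. This is exactly the point the paper defers to its pinning analysis: after conjugating so that $D\subset M\cap L$, the subgroup $M$ is shown to be determined by the choice of a nonzero weight vector $e_{a_n}\in U$, and all such choices are related by conjugation (Corollary \ref{conjcor}), which is what makes the left-hand square commute with the specified inclusion. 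Without an argument of this kind, your proof of the commutative diagram is incomplete.
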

We will show later in Corollary \ref{conjcor} that $M$ is unique in $Q:=U \rtimes L$ up to $Q(k)$-conjugacy.

We start by establishing the existence part of Proposition \ref{sdpprop}.

Let $k$ be a field of characteristic $2$ and consider the split simple $k$-group $M=\SO_{2n+1}$ in its action on its natural $(2n+1)$-dimensional module $X$, stabilising the standard quadratic form $q=x_{0}^2 + x_1x_2 + \dots + x_{2n-1}x_{2n}$, with associated bilinear form $(v,w)=q(v+w)-q(v)-q(w)$. Since $p=2$, we have $(v,v)=4q(v)-2q(v)=0$ so this form is alternating and so cannot be non-degenerate on an odd-dimensional space. Moreover, $M$ centralises the $1$-dimensional radical $k\cdot x$ of the form $(\cdot,\cdot)$,
which gives an embedding $M\subset P\subset \SL(X)$, where $P:=\Stab_{\SL(X)}(k\cdot x)$ is a maximal parabolic subgroup of $\SL(X)$. Choosing a basis of $X$, one gets a Levi decomposition $P \cong \RR_u(P)\rtimes\GL(U)$, where $U:=X/(k\cdot x)$.
The induced action of $M$ on $U$ preserves a non-degenerate alternating form, so for dimension reasons, the image of $M$ in $\GL(U)$ is isomorphic to $L:=\Sp_{2n}$. It is easy to see that $\RR_u(P)$ is a vector group and the conjugation action of $\GL(U)$ on $\RR_u(P)$ furnishes it with the structure of  
$(\det^{-1})\otimes U^*$ as a $\GL(U)$-module, where $\det$ is the determinant representation and $U$ is the $2n$-dimensional natural module. Since $L$ preserves a non-degenerate bilinear form on $U$, the restriction of $U$ to $L$ is self-dual. Also, the determinant representation of $L$ is trivial, so $\RR_u(P)$ is $L$-equivariantly isomorphic to $U$;  abusing notation slightly, we write $M\subseteq U\rtimes L$. Since $M$ surjects onto $L$, we must have $UM=U\rtimes L$. This establishes the existence part of Proposition \ref{sdpprop}. Note also that under the identifications made in this paragraph, $L$ and $M$ share a common ``diagonal'' maximal torus.

Now consider a triple of $k$-groups $(L,M,U)$ that satisfies the hypotheses of Proposition \ref{sdpprop} and assume $M\to L$ is not an isomorphism. 
Since the quotient map $\pi:U\rtimes L\to L$ does not restrict to an isomorphism on $M$, we conclude that the scheme-theoretic intersection $M\cap U\neq 1$ and so $M$ has a non-trivial unipotent normal subgroup (the scheme-theoretic kernel of $\pi|_M$). 
It was proved independently in \cite[Lem.~2.2]{PY06} and \cite[Thm.~2.2]{Vas05} that $\SO_{2n+1}$ over $k$ of characteristic $2$ is the only split simple group with this property, and that the non-trivial normal unipotent subgroup in question is unique.
Precisely:
 
\begin{lemma}\label{lem:lemma2.2} The unique non-trivial normal unipotent subgroup of $\SO_{2n+1}$ is the height $1$ subgroup scheme isomorphic to $\alpha_2^{2n}$, which corresponds to the ideal of short root spaces in the Lie algebra, and can be characterised as the direct product of the kernels of the Frobenius map on the short root groups of $\SO_{2n+1}$.\label{height1}\end{lemma}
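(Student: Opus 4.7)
The plan is to split into existence and uniqueness.

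For existence, I pin a split maximal torus $T\subset G=\SO_{2n+1}$ and a Chevalley basis, with short roots $\pm e_1,\ldots,\pm e_n$ and long roots $\pm e_i\pm e_j$. The key characteristic-$2$ observation is that the Chevalley structure constants for the commutator of two short root groups in type $B_n$ carry a factor of $\pm 2$ (the sum of two short roots is a long root, bringing in a $\pm 2$), so short root groups pairwise commute. The product $\prod_{\alpha\text{ short}}U_\alpha$ is therefore an abelian unipotent subgroup, and the Frobenius kernel of each factor combines into a closed subgroup scheme $N\cong\alpha_2^{2n}$. Normality reduces to stability under $T$ (clear) and under each long root group $U_\beta$: the commutator $[U_\beta,U_\alpha]$ for short $\alpha$ lies in another short root group, and the infinitesimal Frobenius-kernel portion stays inside the Frobenius kernel of that target short root group, hence inside $N$.

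For uniqueness, consider the very special isogeny $\phi\colon\SO_{2n+1}\to\Sp_{2n}$ in characteristic $2$, induced from the action on $X/kx$ (with $x$ spanning the radical of the alternating form associated to $q$); its kernel is exactly $N$. If $K\trianglelefteq G$ is a non-trivial normal unipotent subgroup scheme, then $\phi(K)$ is a normal unipotent subgroup scheme of $\Sp_{2n}$. One shows separately that $\Sp_{2n}$ has no non-trivial normal unipotent subgroup scheme in characteristic $2$: the scheme-theoretic centre $\mu_2$ is of multiplicative type, not unipotent, and a weight-space analysis of $\sp_{2n}$ shows that the only $\Ad(\Sp_{2n})$-invariant subspace of $p$-nilpotent elements is $\{0\}$. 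Hence $\phi(K)=1$, giving $K\subseteq N$.

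The remaining step identifies $K$ with $N$. The Lie algebra $\k:=\Lie(K)$ is an $\Ad(G)$-stable subspace of $\n:=\Lie(N)$, which is the sum of the short root spaces. One verifies that $\n$ is an abelian ideal of $\so_{2n+1}$: $[e_\alpha,e_\beta]$ for two short roots vanishes as above, while $[e_\alpha,e_{-\alpha}]=h_\alpha$ vanishes because $\SO_{2n+1}$ is the adjoint form of $B_n$, so the coroot $\alpha^\vee=2e_i$ for short $\alpha=e_i$ is divisible by $2$. Via $\phi$, $\n$ is isomorphic to the natural $(2n)$-dimensional symplectic $\Sp_{2n}$-module, which is irreducible. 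Since $K$ is non-trivial infinitesimal, $\k\neq 0$, so irreducibility forces $\k=\n$. Then $K$ and $N$ are both height-$1$ subgroup schemes of $G$ with Lie algebra $\n$, so the height-$1$ subgroup-scheme / restricted Lie subalgebra correspondence gives $K=N$. The main obstacle will be the weight-space analysis showing that $\Sp_{2n}$ has no non-trivial normal unipotent subgroup in characteristic $2$, which is the technical core of the treatments in \cite{PY06} and \cite{Vas05}.
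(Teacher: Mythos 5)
The paper itself offers no proof of this lemma---it is imported verbatim from \cite[Lem.~2.2]{PY06} and \cite[Thm.~2.2]{Vas05}---so your attempt has to stand on its own, and as written it has one false step and one unproved core. The false step is in the existence part: the Chevalley commutator formula only governs pairs of \emph{linearly independent} roots, so while non-proportional short root groups of $\SO_{2n+1}$ do commute in characteristic $2$ (structure constant $\pm 2$), an opposite pair $U_{e_i}$, $U_{-e_i}$ does not: together they generate a copy of $\SO_3\cong\PGL_2$. Hence ``$\prod_{\alpha\ \mathrm{short}}U_\alpha$ is an abelian unipotent subgroup'' is simply wrong (it is not even a subgroup). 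The correct statement is infinitesimal: the span $\n$ of the short root vectors is an abelian restricted subalgebra with trivial $2$-operation---this uses $h_\alpha=0$ for short $\alpha$ in the adjoint group, the divisibility fact you only invoke later---so the height-one correspondence produces $N\cong\alpha_2^{2n}$ containing the Frobenius kernels of the short root groups. Relatedly, your normality check (``stability under $T$ and under each long root group'') is incomplete precisely because it leaned on the false commutation claim: $G$ is generated by $T$ and \emph{all} root groups, one simple root of $B_n$ is short, and conjugating the Frobenius kernel of $U_{-e_i}$ by $U_{e_i}$ does not preserve that single Frobenius kernel (it lands in the product of the kernels for $\pm e_i$), so the short-root-group case is the genuinely non-trivial one. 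The clean fix is to check that $\n$ is $\Ad(G)$-stable on $T$ and on every root group and use that normality of a height-one subgroup is equivalent to $\Ad(G)$-stability of its Lie algebra.

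In the uniqueness part the architecture (very special isogeny $\phi$ with $\ker\phi=N$, irreducibility of $\n$ as the pullback of the natural symplectic module, height-one correspondence) is sound, but the step that carries all the weight---$\Sp_{2n}$ has no non-trivial normal unipotent subgroup scheme in characteristic $2$---is asserted, not proved; as you acknowledge, it is exactly the content of \cite{PY06} and \cite{Vas05}, so the proposal is circular as a replacement for those citations. Moreover the sketched ``weight-space analysis of $\sp_{2n}$'' only addresses height-one subgroups: you must first observe that a normal unipotent subgroup scheme of a reductive group is necessarily infinitesimal (its smooth connected part is trivial by reductivity, and a finite \'etale normal subgroup is central, hence of multiplicative type, not unipotent) and then pass to its first Frobenius kernel, which is again normal, before any Lie-algebra argument applies; the same reduction is also implicitly needed before your final ``$K$ and $N$ are both height-one'' step, though there it does follow from $K\subseteq N$. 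Once these reductions are supplied, the weight-by-weight check is not deep and can in fact be run directly in $\so_{2n+1}$ (an $\Ad(G)$-stable subspace containing a long root vector or a nonzero element of $\t$ contains a nonzero toral element, so any stable subspace of nilpotent elements lies in $\n$, and irreducibility of $\n$ finishes), which would let you bypass the detour through $\Sp_{2n}$ altogether.
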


Let $D$ be a split maximal torus of $L$, and denote $Q:=U\rtimes L$. Since $U$ is unipotent, $D$ remains a split maximal torus of $Q$. Being of the same rank, any split maximal torus of $M$ is also one of $Q$. Since split maximal tori of $Q$ are conjugate, we may replace $M$ by a $Q(k)$-conjugate to assume $D\subset M \cap L$. Now by the irreducibility of $U$ as an $L$-module (and hence as an $M$-module), $\Lie(\ker\pi|_M)=\Lie(U)$ and so, by the uniqueness in Lemma \ref{lem:lemma2.2}, the set of weights of $D$ on $U$ is the set of short roots in $M$ without multiplicity and $L = \Sp_{2n}$ as a quotient of $M = \SO_{2n+1}$. It follows that there is only one possibility for $U$ up to isomorphism as an $M$-module, and hence $U$ must be the natural module for $M$ and $L$.
This gives rise to the map $\psi$ in Proposition \ref{sdpprop}.
In order to complete the proof of the Proposition, we just need to establish that the restriction of $\psi$ to $M$ gives the isomorphism claimed.
This part is delayed to the next section, where we show the stronger result Corollary \ref{conjcor}.

\begin{remarks}(i). The basic thrust of these observations probably goes back to \cite[Prop.~1.5]{LS96}, where $M$ is described as a complement to $U$ in $U\rtimes L$ on the basis that $M(k)U(k)=L(k)U(k)$ and $M(k)\cap U(k)=1$. The more expedient (scheme-theoretic) notion of a complement is given in \cite[\S4]{SteNon} which recovers the natural correspondence between the space $\opH^1(L,U)$ and the set of $U(k)$-conjugacy classes of complements to $U$ in $U\rtimes L$.

(ii). Unsurprisingly, one finds a closely related cohomological phenomenon: if $G$ is a split simple $k$-group and $V$ is an irreducible $G$-module with $V^{[1]}$, then one gets a natural map $\opH^1(G,V)\to \opH^1(G,V^{[1]})$ which is an isomorphism unless $p=2$, $G\cong \Sp_{2n}$ and $V=L(\varpi_1)$ is the natural module for $G$ \cite{CPS83}. In the latter case $\opH^1(G,V)=0$ and $\opH^1(G,V^{[1]})\cong k$. This is essentially the same fact that for the first Frobenius kernel $G_1$ of a split simple $k$-group $G$, one has $\opH^1(G_1,k)=0$ unless $p=2$ and $G$ is of type $C_n$, in which case $\opH^1(G_1,k)$ is isomorphic as a $G$-module to a Frobenius twist of the natural module $V^{[1]}$; see \cite[II.12.2]{Jan03}.

(iii). An important phenomenon in the analysis of the subgroup structure of reductive groups over algebraically closed fields $k$ of characteristic $2$ is the occurrence of subquotients of a parabolic subgroup $P$ isomorphic to $V\rtimes\Sp_{2n}$. Such subquotients are often the source of subgroups $M\cong\SO_{2n+1}$ of $G$ which are not contained in any Levi subgroup $L$ of $P$. Thus such a subgroup $M$ is non-$G$-completely reducible in the terminology of Serre \cite{Ser05}. It is the presence of non-$G$-completely reducible subgroups which obstructs a recursive classification of the lattice of all reductive subgroups of reductive groups from the list of maximal subgroups.\end{remarks}

\subsection{Pinning.}\label{pinningq}
Recall that a \emph{pinning} of a split reductive $k$-group $G$ is a choice of a split maximal torus $T$, a system of positive roots $\Phi^+\subseteq\Phi$, and for each simple root $a\in\Phi^+$ an isomorphism $x_a:\Ga\to G_a; t \mapsto x_a(t)$ for each root group $G_a$ of $G$. Because of the $T$-equivariant isomorphism $G_a\cong \Lie(G_a)$ this is equivalent to choosing a nonzero element $e_a\in \Lie(G_a)$ for each simple root $a$. The following well-known statements hold:
\begin{itemize}
\item[(C1)] The Chevalley commutator formula: for linearly independent $a,b\in \Phi$ we have \[[x_a(t),x_b(u)]=\prod_{\stackrel{i,j>0}{ia+jb\in \Phi}}x_{ia+jb}(c_{ij}t^iu^j),\] where $c_{ij}\in\Z$ depend only on the Chevalley basis and an ordering of positive roots. 
\item[(C2)] For $t\in \Gm$ and $a\in\Phi$ let $s_a(t):=x_a(t)x_{-a}(-t^{-1})x_a(t)$, and $h_a(t):=s_a(t)s_a(-1)$. Then $h_a$ defines a (coroot) homomorphism $h_a:\Gm\to T$. The images of the $h_a$ as $a$ varies over all simple roots $a$ generate the maximal torus $T$ of $G$. 
\item[(C3)] The $k$-subgroup $N_G(T)$ is generated by the $s_a(t)$ over $t\in \Gm$ and simple roots $a$. The element $s_a(1)$ maps into the Weyl group $W:=N_G(T)(k)/T(k)$ to a reflection $w_a$ in the hyperplane perpendicular to $a^\vee=h_a$; hence $W$ is generated by the images of the $s_a(1)$ over the simple roots $a$.
\item[(C4)] We have $s_a(1)x_b(t)s_a(1)^{-1}=x_{w_a\cdot b}(ct)$ for $c\in\{\pm1\}$ depending only on the Chevalley basis.\label{weylonroot}
\item[(C5)] We have $h_a(t)x_b(u)h_a(t)^{-1}=x_b(t^{\langle b,a^\vee\rangle}u)$.
\item[(C6)] We have $s_a(1)h_a(t)s_a(1)^{-1}=h_a(t^{-1})$, $h_a(t)s_a(u)=s_a(tu)$, and $s_a(t)s_a(v)=h_a(-t/v)$.\end{itemize}
  See \cite[p23 and Lems.~19,~20,~22,~28]{Ste16}; the very last relation in (C6) is a consequence of the other relations which we record here for later use. Note that as the Weyl group is transitive on roots of the same length, we can use (C4) to recover a pinning of $G$ from one function $x_a$ for each root length in $\Phi$, together with the simple reflections $s_i:=s_{a_i}(1)$ where $\{a_1,\dots,a_n\}$ is a set of simple roots.

With this in mind, consider $L\cong\Sp_{2n}$ as a Chevalley group. As per \cite{Bourb82}, we label the simple roots $\{a_1,\dots,a_{n-1},2a_n\}$ according to the Dynkin diagram 
\[\dynkin[edge length=.75cm, labels={a_1,a_2,a_{n-2},a_{n-1},2a_n}] C{},\] 
so that $a_i$ is short for $i<n$ and $2a_n$ is long. Then $L=\langle x_{a_1}(t),x_{2a_n}(t),s_i\rangle$, where  $s_i:=s_{a_i}(1)$ for $1\leq i\leq n-1$ and $s_n:=s_{2 a_n}(1)$.
As in the previous subsection, let $Q$ be the semidirect product of $L=\Sp_{2n}$ with its natural module $U$ and $\pi:Q\to L$ the projection. We have seen that $Q$ contains a subgroup $M=\SO_{2n+1}$ sharing a maximal torus $D$ with $L$. The (implicit) choice of positive roots of $L$ relative to $D$ now implies a presentation of $M$ as a Chevalley group compatible with that of $L$, in a manner that we now spell out. 


Thanks to Proposition \ref{sdpprop},  $\pi$ sends $M$ onto $L$ and has infinitesimal unipotent kernel. So it sends the maximal torus $D$ of $M$ isomorphically onto its image. Hence $\pi$ sends $N_M(D)$ onto $N_L(D)$ (cf.~\cite[Lem.~3.2.1]{CGP15}).

Finally, since the centre $Z$ of $\GL(U)$ acts on $Q$ by centralising $L$ and scaling $U$ linearly, any non-zero scalar multiple of $e_a$ could arise in this way. And since $\pi(N_M(D))=N_L(D)$ acts transitively on the non-zero weight vectors in $U$, by (C4) a choice of $e_{a_n}$ determines each $e_a$, and in turn determines $M$ completely as a subgroup of $Q$. In particular, we get a pinning of $M$, based on the Dynkin diagram
\[\dynkin[edge length=.75cm, labels={a_1,a_2,a_{n-2},a_{n-1},a_n}] B{}.\] 

Now the next result follows and incidentally completes the proof of Proposition \ref{sdpprop}.
\begin{cor}\label{conjcor} The subgroup $M$ is unique in $Q$ up to $Q(k)$-conjugacy.\end{cor}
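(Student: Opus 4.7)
My plan is to reduce, by $Q(k)$-conjugation, to the situation in which two candidate subgroups share a common split maximal torus, and then invoke the pinning analysis developed just above to conclude that they coincide.

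First, every subgroup $M \subseteq Q$ satisfying the hypotheses of Proposition \ref{sdpprop} contains a split maximal torus of $Q$: a split maximal torus $D_M$ of $M$ maps isomorphically onto its image in $L$ (since $\pi|_M$ has infinitesimal kernel) and has the same rank as $L$, so $D_M$ is a split maximal torus of $Q$. Given two such subgroups $M_1, M_2$, I use the standard $Q(k)$-conjugacy of split maximal tori in smooth connected $k$-groups to replace $M_2$ by a $Q(k)$-conjugate and arrange that $M_1, M_2$ share a common split maximal torus $D$.

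With this shared $D$, the weight decomposition of $\Lie(Q) = U \oplus \Lie(L)$ under $D$ dictates the Lie algebra of each $M_i$: the short root spaces of $M_i$ must coincide with the weight spaces $U_{\pm e_i}$ of $U$, and the long root spaces must coincide with the short root spaces of $L$. Hence $\Lie(M_1) = \Lie(M_2)$, and by the pinning analysis just concluded---using $\pi(N_M(D)) = N_L(D)$ and the transitivity of the Weyl action on the short roots---each $M_i$ is completely determined as a subgroup of $Q$ by the choice of pinning element $e_{a_n}^{(i)} \in U_{a_n}$.

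Since $U_{a_n}$ is one-dimensional, $e_{a_n}^{(2)} = c \cdot e_{a_n}^{(1)}$ for some $c \in k^\times$. The character $a_n$ is primitive in the character lattice of $D$, so $a_n : D(k) \to k^\times$ is surjective; choose $d \in D(k)$ with $a_n(d) = c$. Conjugation by $d$ preserves $D$ and rescales $U_{a_n}$ by $c$, so by the determinacy just asserted, $d M_1 d^{-1} = M_2$. The main obstacle I anticipate is making that determinacy claim fully rigorous---showing that $e_{a_n}$ really pins down $M_{a_n}$ as a closed subscheme of $Q$, not merely at the Lie-algebra level---which is a genuinely characteristic-$2$ phenomenon requiring careful use of the Chevalley commutator relations between $M_{a_n}$ and the long root groups $M_{a_i} = L_{a_i}$ (for $i < n$) inherited from the pinning of $L$.
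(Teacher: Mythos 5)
Your reduction to a common split maximal torus $D$ is the same first step as the paper's, and your observation that $\Lie(M_1)=\Lie(M_2)$ is correct. The gap is in your final step. Since $D\subseteq M_1$, every $d\in D(k)$ normalises $M_1$, so $dM_1d^{-1}=M_1$: conjugation by a torus element can never carry $M_1$ to a \emph{different} subgroup $M_2$ containing $D$. The underlying error is that the ``pinning element'' $e_{a_n}^{(i)}$ is not an invariant of the subgroup $M_i$: as you yourself note, all candidates have the same root spaces, so any nonzero vector of the one-dimensional space $U_{a_n}$ could serve as pinning element for either group, and it is in any case only defined up to $k^\times$. Concretely, the $a_n$-root group of a candidate $M\supseteq D$ has the shape $\{x^U_{a_n}(t)\,x^L_{2a_n}(\alpha t^2)\}$ for some constant $\alpha\in k^\times$ (its Lie algebra is $U_{a_n}$ for every $\alpha$), and conjugation by $d\in D(k)$ sends such an element to $x^U_{a_n}(a_n(d)t)\,x^L_{2a_n}(a_n(d)^2\alpha t^2)$, i.e.\ it merely reparametrises the same subgroup: the rescaling of the $U$-component is exactly compensated on the $2a_n$-component, so the constant $\alpha$ is unchanged. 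Hence torus conjugation cannot absorb the scalar ambiguity; if your argument were valid it would prove the far stronger (and false) statement that there is only \emph{one} candidate containing $D$, whereas distinct values of $\alpha$ give genuinely distinct subgroups through $D$ (for $n=1$ these are the stabilisers of the quadratic forms $x_1x_2+\alpha x_0^2$).

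This is exactly where the paper's route differs: there the scalar ambiguity in $e_{a_n}$ is accounted for by the action of the centre of $\GL(U)$ on $Q$, which scales $U$ and centralises $L$---an automorphism of $Q$ that is \emph{not} conjugation by an element of $Q$---combined with the determinacy statement from the pinning discussion. Passing from that outer rescaling to honest $Q(k)$-conjugacy is the substantive content behind Corollary \ref{conjcor}, and it is precisely the point your torus-conjugation step fails to supply; your proof needs a different mechanism (or a careful use of the pinning determinacy at the level of root \emph{groups} rather than root spaces) to close this gap.
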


Note that $\Phi(Q)$ contains three root lengths: from now on we call the $2n$ shorter roots in $\Phi(M)$ \emph{very short roots}; we call the $2n^2-2n$ longer roots in $\Phi(M)$ and shorter roots in $\Phi(L)$ \emph{short roots}; we call the $2n$ longer roots in $\Phi(L)$ \emph{long roots}. [In \cite{CGP15} roots of these lengths are referred to as \emph{multipliable}, \emph{non-divisible and non-multipliable}, and \emph{divisible}, respectively.]

For explicit calculations, we can work in $M$ to calculate commutators of very short and short root elements of $Q$. We can use $L$ to calculate commutators of short and long root elements. Applying the bijection $\pi$ and noting that long root elements from non-opposite root groups commute in $\Sp_{2n}$ we get:

\begin{itemize} \item[(C7)] Let $b$ be a very short root and $2a$ a long root. Then provided $2b\neq -2a$, we have for any $t,u\in k$, $[x_b(t),x_{2a}(u)]=1$; indeed, in that case $[M_b,L_{2a}]=1$, since $2a+b$ is not a weight of $U$.\end{itemize}

\section{Definition of \texorpdfstring{$G$}{G}}\label{defng}
We start by recalling some  concepts from \cite{CGP15}. Let $k$ be a field and $G$ be a pseudo-reductive $k$-group. Let $K/k$ be the minimal field of definition for the geometric unipotent radical $\RR_u(G_{\bar k})$. Let $G^{\mathrm{red}}_K:=G_K/\RR_u(G_K)$ be the maximal reductive quotient of $G_K$. 
The adjunction of Weil restriction and extension of scalars implies that for any finite field extension $k'/k$ the identity map $G_{k'}\to G_{k'}$ corresponds to a $k$-homomorphism 
\begin{equation}\label{canonemb}j_{G,k'}:G\to \R_{k'/k}(G_{k'})\end{equation}
that is a closed immersion.
The composition of $j_{G,K}$ with the Weil restriction of the canonical $K$-quotient map $G_K \to G^{\mathrm{red}}_K$ gives rise to a map \begin{equation}\label{importantmap} i_G:G \to \R_{K/k}(G^{\mathrm{red}}_K).\end{equation}
Importantly, the map $i_G$ is not always a closed immersion. The scheme-theoretic intersection of $\ker i_G$ with any Cartan subgroup $C$ of $G$ is central in $G$ and independent of $C$ by \cite[Prop. 9.4.2(i)]{CGP15}, and one says $G$ is of \emph{minimal type} if this intersection is trivial.  

We recall further that $G$ is called \emph{pseudo-simple} if it is non-commutative and has no non-trivial proper smooth connected normal $k$-subgroups; note that a pseudo-simple $k$-group $G$ is equal to its derived group $\D(G)$ 
\cite[Def.~3.1.1, Lem.~3.1.2]{CGP15}.
We say that $G$ is \emph{absolutely pseudo-simple} if $G_{k_s}$ is pseudo-simple for $k_s$ the separable closure of $k$ in $\bar k$. Say $G$ is \emph{pseudo-split} if it contains a split maximal torus.  

If $G$ is a pseudo-split absolutely pseudo-simple $k$-group of minimal type (so $\Phi$ is irreducible by \cite[Lem. 3.1.4]{CGP15}), then by \cite[Thm. 9.4.7]{CGP15} $\ker i_G$ itelf is non-trivial if and only if $k$ is imperfect, $\Char(k)=2$ and the irreducible root system of $G_{k_s}$ is non-reduced, i.e. a root system of type $BC_n$. Furthermore, \cite[Thm.~2.3.10]{CGP15} gives us  $G_{K}^\mathrm{red}\cong\Sp_{2n}$ and from \cite[Thm.~3.4.6]{CGP15} we see that for any chosen split maximal torus in $G$, there is a Levi subgroup of $G$ isomorphic to $\Sp_{2n}$ containing that torus.

Using the $k$-groups $L$, $M$, $U$, $Q=U\rtimes L$ and $D$ from the previous section, 
we construct pseudo-split pseudo-simple groups of minimal type with non-reduced root systems. 
The basic idea is first to take a Weil restriction $\smash{\QQ:=\R_{E/k}(Q_E)}$ across a suitable purely inseparable field extension $E/k$. This has the effect
of enlarging each of the $2n^2+2n$ root groups of $Q$ to become isomorphic to the additive group of the field $E$---recall that $\Phi(Q)$ contains $2n$
very short roots, $2n^2-2n$ short roots, and $2n$ long roots.
We then generate the desired subgroups from certain vector subgroups of these root groups. More precisely, given any root $a$ in the root system $\Phi:=\Phi(Q,D)$ and root group $\QQ_a=\R_{E/k}(Q_a)$ of $\QQ$, we may take any $k$-vector subspace $S$ of $\QQ_a(k)=Q_a(E)=E$ and find a uniquely corresponding vector subgroup $\underline{S}$ of $\QQ_a$ whose $k$-points coincide with $S$. To decide which parts of which root groups will be required, we need the following data.

\begin{defn}[{cf.~\cite[9.6.8]{CGP15}}]
	\label{Def:data}
	Let $k$ be an imperfect field of characteristic $2$ and let $K/k$ be a non-trivial purely inseparable finite field extension. Fix a pair of non-zero $K^2$-subspaces $V^{(2)}$, $V'\subseteq K$, such that the following conditions hold:
\begin{enumerate}\item $V'$ is a $kK^2$-subspace of $K$;
\item $V^{(2)}\cap V'=0$; and
\item $\dim_{K^2} V^{(2)}< \infty$.\end{enumerate}
Let $K_0:=k\langle V^{(2)}\oplus V'\rangle$ be the subfield of $K$ generated over $k$ by ratios of nonzero elements of $V^{(2)}\oplus V'$. 
Note that since $K_0$ is a field extension of $k$, by definition, and each of $V^{(2)}$ and $V'$ is a non-zero $K^2$-space, $K_0$ contains the subfield $kK^2$ of $K$. 
\end{defn}

\begin{remarks}(i). It turns out that the groups we construct will have $K/k$ as the minimal field of definition over $k$ for the geometric unipotent radicals. In the rank $1$ case we will even have $K=K_0$.

(ii). \label{vfromv2} The notation $V^{(2)}$ is chosen because $V^{(2)}$ is a $K^2$-vector space arising as the image under the Frobenius morphism $q$ of a $K$-vector space $V\subset K^{1/2}$; the map $q$ being the squaring map. Explicitly, to define $V$ in terms of $V^{(2)}$ one defines the abstract $K$-vector space $V:=K\otimes_{\iota,K^2}V^{(2)} \subseteq K\otimes_{\iota,K^2}K$ where $\iota:K^2\to K$ is the square root isomorphism. Since $\smash{K\otimes_{\iota,K^2}K}$ is a field, we have an inclusion of $K$-vector spaces $\smash{V \hookrightarrow K^{1/2}}$ given by $a \otimes b \mapsto a \sqrt{b}$. \end{remarks}

We now define the objects of interest. Fix an integer $n \geq 1$ (which will end up being the rank of our group). 

If $n\geq 2$, then let $(K/k, V',V^{(2)})$ be a triple as in Definition \ref{Def:data}.
If $n=1$, then choose such a triple with one \emph{additional} requirement: in this case, we insist on a choice of $V'$ so that $K_0=K$.

Let $V$ be the (finite-dimensional) $K$-vector space described in Remark \ref{vfromv2}$(ii)$. Let $L$, $M$, $U$ and $Q=U\rtimes L$ be $k$-groups as described in Proposition \ref{sdpprop}.

Let $E$ be the minimal extension of $K$ in $K^{1/2}$ containing $V$, noting that this is a non-zero finite purely inseparable extension (it is generated as an extension by a $K$-basis of $V$, each element of which squares to an element of $K$). Now, let $\QQ:=\R_{E/k}(Q_E)$, and let $\mathsf{M}:= \R_{E/k}(M_E)$, $\mathsf{L}:= \R_{E/k}(L_E)$ and $\mathsf{U}:=\R_{E/k}(U_E)$ be the corresponding subgroups, where $Q = U\rtimes L$ implies $\QQ=\mathsf{U}\rtimes \mathsf{L}$. Abusing notation, we write $\pi$ for the quotient map $\R_{E/k}(\pi):\QQ\to \mathsf{L}$.
Using (\ref{canonemb}) 
we may identify copies of $L$ and $M$ inside $\mathsf{L}$ and $\mathsf{M}$ 
and the common split maximal torus $D$ of $L$ and $M$ identifies with a common split maximal torus of $\mathsf{L}$ and $\mathsf{M}$, which is also a split maximal torus of $\QQ$. The set of roots $\Delta:=\{a_1,\dots,a_n,2a_n\}$ is such that $\{a_1,\ldots,a_n\}$ forms a base of $\Phi(\mathsf{M},D)$ and $\{a_1,\ldots,a_{n-1},2a_n\}$ forms a base of $\Phi(\mathsf{L},D)$. 
For a root $a$, the Weil restrictions of the maps $x_a:(\Ga)_E\to (L_a)_E\subset L_E$ or $x_a:(\Ga)_E\to (M_a)_E\subset M_E$ determine parametrisations of the corresponding root groups $\mathsf{M}_a$ or $\mathsf{L}_a$, giving isomorphisms with $\R_{E/k}(\Ga)$, whose $k$-points are isomorphic to $E$.
Now, define the following subgroups:
\begin{itemize}\item Let $G_{a_n}$ be the $k$-vector subgroup $\underline{V}$ of $\mathsf{M}_{a_n}$, with $\underline{V}(k)=V$.
\item  Let $G_{2a_n}$ be the $k$-vector subgroup $\underline{V'}$ of $\mathsf{L}_{2a_n}$, with $\underline{V'}(k)=V'$ (this makes sense, since $V'$ is chosen to be a $kK^2$-subspace of $K$, and hence is a $k$-subspace in particular).
\item If $n\geq 2$, let $G_{a_1}$ be the $k$-vector subgroup $\underline{K}$ of $\mathsf{L}_{a_1}$ such that $\underline{K}(k)=K$.
\item Finally, let $C:= \R_{K/k}(D_K)$, canonically a subgroup of $\R_{E/k}(D_E)\subset \mathsf{M}\cap\mathsf{L}$. 
\end{itemize}

(For a visual representation when $n=2$, see Figure \ref{BC2} below, taking $V''=K$.)

\begin{defn}\label{Defn:G}
With notation as above, let 
\begin{equation*}
G:= \langle G_{a_1},G_{a_n},G_{2a_n},C,s_i\mid 1\leq i\leq n\rangle \subset \QQ, 
\end{equation*}
where $s_i:=s_{a_i}(1)\in N_L(D)(k)=N_M(D)(k)$ are as defined in \S\ref{pinningq}.
\end{defn}

\begin{lemma}\label{lem:C_G(D)}
(i) The image of $G$ under $\pi$ is a subgroup of the canonical subgroup $\R_{K/k}(L_K)$ of $\mathsf{L}$. 

(ii) We have $C_G(D) = C$. In particular, $C$ is a Cartan subgroup of $G$.

(iii) The group $G$ is connected and smooth.
\end{lemma}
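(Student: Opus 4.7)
The plan is to address each of the three claims separately, using the setup from Section~\ref{setup} and the commutator relations (C1)--(C7).

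For (i), I would verify that each of the generators of $G$ maps under $\pi$ into $\R_{K/k}(L_K)$. Since $\pi$ restricts to the identity on $\mathsf{L}$, the generators $G_{a_1}$, $G_{2a_n}$, $C$, and each $s_i$ already lie in $\R_{K/k}(L_K)$: their $k$-points are contained in $K$ (respectively the subspaces $K$, $V' \subseteq K$, the torus $\R_{K/k}(D_K)$, and $L(k) \subseteq L(K)$). For $G_{a_n} \subseteq \mathsf{M}_{a_n}$, the pinning analysis in Section~\ref{pinningq} shows that $\pi$ restricts on $\mathsf{M}_{a_n}$ to a Frobenius morphism onto $\mathsf{L}_{2a_n}$; so $\pi(G_{a_n})$ has $k$-points $V^{(2)} \subseteq K$, and again lies in $\R_{K/k}(L_K)$.

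For (ii), the inclusion $C \subseteq C_G(D)$ is immediate, since $C \subseteq \R_{E/k}(D_E)$ is commutative and contains $D$. For the reverse: $U$ has no zero $D$-weight, so $C_Q(D) = D$ and $C_\QQ(D) = \R_{E/k}(D_E) \subseteq \mathsf{L}$. Since $\pi$ is the identity on $\mathsf{L}$, we get $C_G(D) = \pi(C_G(D)) \subseteq \pi(G)$; by (i), $C_G(D) \subseteq \R_{K/k}(L_K) \cap \R_{E/k}(D_E)$. A functor-of-points check (using that $D \subseteq L$ is defined over $k$) shows this intersection equals $\R_{K/k}(D_K) = C$, completing the equality. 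The ``Cartan'' claim then follows because $D$ is a maximal $k$-split torus of $G$ with centraliser $C$.

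For (iii), form $\widetilde H := \langle wHw^{-1} : w \in W_0\rangle \subseteq \QQ$, where $H := \langle G_{a_1}, G_{a_n}, G_{2a_n}, C\rangle$ and $W_0 \subseteq L(k)$ is the finite subgroup generated by $s_1, \ldots, s_n$ (finiteness follows from $s_i^2 = h_{a_i}(1) = 1$ in characteristic $2$ together with the usual Coxeter relations in $L$). Each $wHw^{-1}$ is a conjugate of a smooth connected subgroup, hence smooth connected; by the standard result that the subgroup generated by smooth connected closed subgroups is itself smooth connected, $\widetilde H$ is smooth connected. Conjugation by each $s_i$ permutes the family $\{wHw^{-1}\}$, so $\widetilde H$ is normalised by every $s_i$, and it suffices to show $s_i \in \widetilde H$ for each $i$ in order to conclude $G = \widetilde H$. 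For $i < n$, (C2) gives $s_i = x_{a_i}(1)\,x_{-a_i}(-1)\,x_{a_i}(1)$, and the $W_0$-conjugates of $G_{a_1}$ inside $\widetilde H$ include the full $\underline{K}$-subgroups of $\mathsf{L}_{\pm a_i}$, so both $x_{\pm a_i}(1) \in \widetilde H$ because $1 \in K$. The main obstacle is $s_n$: the analogous expression requires $1 \in V'$, which is not a hypothesis. To circumvent this I would work inside the rank-$1$ subgroup of $\widetilde H$ attached to the long root $2a_n$, generated by the $\underline{V'}$-parts of $\mathsf{L}_{\pm 2a_n}$, the very short root subgroups $G_{\pm a_n}$ (whose images under $\pi$ populate $\underline{V^{(2)}} \subseteq \mathsf{L}_{\pm 2a_n}$), and the cocharacter $h_{2a_n}(K^\ast)$ lying in $C$. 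Using (C1), (C6), and (C7), together with the squaring map $V \to V^{(2)}$, one can realise $s_n$ as a product inside this connected rank-$1$ piece, thereby concluding $G = \widetilde H$.
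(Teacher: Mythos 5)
Parts (i) and (ii) of your proposal follow essentially the paper's own route: check the generators under $\pi$ (the only non-obvious one being $G_{a_n}$, whose image has coordinates in $E^2\subseteq K$ — note you should also invoke density of the $k$-points of the vector group $G_{a_n}$ to pass from $k$-points to the subgroup scheme), and then compute $C_\QQ(D)=\R_{E/k}(D_E)$ (the paper cites \cite[Prop.~A.5.15(3)]{CGP15} for compatibility of centralisers with Weil restriction) and cut it down to $C=\R_{K/k}(D_K)$ using (i) and the fact that $\pi$ is injective on $\R_{E/k}(D_E)$. Your packaging of (iii) via $\widetilde H=\langle wHw^{-1}: w\in W_0\rangle$ is a harmless variant of the paper's argument, and your treatment of $s_i$ for $i<n$ (conjugating $G_{a_1}$ to the $\underline{K}$-subgroups of $\mathsf{L}_{\pm a_i}$ and using $1\in K$) is exactly the paper's.

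The gap is precisely where you say ``one can realise $s_n$ as a product inside this connected rank-$1$ piece'': that is the only non-routine step of (iii), and you assert it rather than prove it. The paper's argument is short but uses two specific facts you never state. First, $V'$ is a $K^2$-subspace of $K$, so for $0\neq t\in V'$ one has $t^{-1}=t^{-2}\cdot t\in K^2V'\subseteq V'$; only because of this does $x_{-2a_n}(t^{-1})$ lie in ${}^{s_n}G_{2a_n}$, so that $s_{2a_n}(t)=x_{2a_n}(t)x_{-2a_n}(t^{-1})x_{2a_n}(t)$ lies in the product $G_{2a_n}\cdot{}^{s_n}G_{2a_n}\cdot G_{2a_n}$ of smooth connected subgroups. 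Second, by (C6), $s_n=h_{2a_n}(t^{-1})\,s_{2a_n}(t)$, and the correction $h_{2a_n}(t^{-1})$ lies in $C(k)=D(K)$ because $t\in K^\times$. If instead you insist on the route through $G_{\pm a_n}$ and the squaring map, it can be made to work, but it needs its own verifications: $v^{-1}=v^{-2}v\in KV\subseteq V$ for $0\neq v\in V$ (so that $s_{a_n}(v)$ lies in $G_{a_n}\cdot{}^{s_n}G_{a_n}\cdot G_{a_n}$), and then the coroot identity $a_n^\vee=2\,(2a_n)^\vee$, giving $s_{a_n}(v)=h_{a_n}(v)s_n=h_{2a_n}(v^2)s_n$ with $v^2\in K$, so the torus correction again lies in $C(k)$. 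Without that last point the correction a priori only lies in $D(E)$, not in $C$, and the argument does not close; so the appeal to ``(C1), (C6), (C7) and the squaring map'' needs to be replaced by one of these explicit computations.
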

\begin{proof} Part (i) follows by noting that $\pi$ takes each generating subgroup into $\R_{K/k}(L_K)$. (For $t\in E$, an element $x_{a_n}(t)$ maps to $x_{2a_n}(t^2)$, which is in $L(K)$, since $E^2\subseteq K$; and as $\R_{E/k}(M_a)\cong \R_{E/k}(\Ga)$ is a vector group, its $k$-points $M_a(E)$ are dense.)

For part (ii), it follows from \cite[Prop.~A.5.15(3)]{CGP15} that $C_{\mathsf{Q}}(D) = \R_{E/k}(C_{Q_E}(D_E))$.
Since zero is not a weight of $D$ on $U$, we see that $C_{Q_E}(D_E) = C_{L_E}(D_E) = D_E$,
and hence $C_{\mathsf{Q}}(D) = \R_{E/k}(D_E)$.
Therefore, we have $C_G(D) = \R_{E/k}(D_E)\cap G$. But as $\pi$ is an isomorphism on $\R_{E/k}(D_E)$, and by (i), we have $\pi(G)\subseteq \R_{K/k}(L_K)$, and so also $C_G(D)\subseteq \R_{K/k}(L_K)$, hence the assertion.

Part (iii) will follow if we can show that $G$ is generated by smooth connected subgroups. As such, we aim to show that each $s_i$ is contained in a smooth connected subgroup of $G$. We appeal to (C2) and (C4) mainly. First note that if $n\geq 2$ and $i<n$, the simple short root $a_i$ is conjugate to $a_1$ under an element $w\in W$ represented by $s$, which is a product of the $s_i$. Therefore conjugating $x_{a_1}(1)\in G(k)$ by $s$, we may find $x_{a_i}(1)\in G(k)$. Conjugating once more by $s_i$, we also find $x_{-a_i}(1)\in G(k)$. Thus $s_i$ is in the set of $k$-points of the product of three smooth connected groups, namely ${}^sG_{a_1}{}^{s_is}G_{a_i}{}^sG_{a_1}$, as required. Finally, take any $x_{2a_n}(t)$ such that $0\neq t\in V'$. Since $V'$ is a $K^2$-subspace of $K$, we have $t^{-1}\in V'$. Thus $x_{-2a_n}(t^{-1})={}^{s_n}x_{2a_n}(t)\in G(k)$. We now see $s_{a_n}(t)$ in the $k$-points of ${}^sG_{2a_n}{}^{s_is}G_{2a_n}{}^sG_{2a_n}$, and right multiplying the latter by $C$ using (C6), we can in fact see $s_n$ in the product of four smooth connected groups.
\end{proof}


Since the $s_i$ generate $W$ modulo $D$ and and $W$ is transitive on roots of the same length, we can choose a set of pairs $(w,a)$ with $w\in W$ and $a\in\{a_1,a_n,2a_n\}$ so that the collection $w\cdot a$ contains each positive root of $\mathsf{Q}$ exactly once. In fact, as $p=2$, the choices in (C4) vanish and so 
\begin{equation}x_{w\cdot a}(t):={}^{s_w}x_a(t)\label{wdota}\end{equation} 
is well-defined, where $s_w\in N_G(D)(k)$ is some product of the $s_i$ representing $w$.
Let $G_{w\cdot a}$ be the closed subgroup generated by the elements $x_{w\cdot a}(t)$, where $t\in V$, $V'$, or $K$, depending on $a=a_n$, $2a_n$, or $a_1$ respectively (the last only when $n\geq 2$).

For convenience of exposition, order the positive roots as $\{b_1,\dots, b_{2n},\dots, b_{m}\}$, where $m:=n^2+n$ so that the roots $b_1,b_3,\dots,b_{2n-1}$ are very short and in height order, and for $1\leq i\leq n$, we have $b_{2i}=2b_{2i-1}$ is a long root. Thus $b_i$ is short for $i>2n$.
Now consider the multiplication map
\begin{equation}\label{eq:m} \mu:\prod_{i=1}^{m} G_{b_i}\to G.\end{equation}

\begin{lemma}\label{scru}
The map $\mu$ as just defined is injective on $k$-points. Let $\UU$ be the image of the $k$-points of the domain. Then $\UU$ is a subgroup of $\mathsf Q(k)$. 
\end{lemma}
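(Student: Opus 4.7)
My plan is to handle the two assertions in turn.

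For injectivity of $\mu$ on $k$-points, I would appeal to the standard big-cell decomposition of $\mathsf{Q}$. Writing $Q = U \rtimes L$ over $E$, the positive unipotent subgroup (with respect to $B = D \cdot L^+ \cdot U^+$) is $Q^+ = U^+ \rtimes L^+$, and the product-of-root-groups map $\prod_{i=1}^m Q_{b_i} \to Q^+$ is an isomorphism of $E$-schemes for any fixed ordering of the roots (this is only a statement about the underlying scheme, so the ordering is immaterial). Applying the exact functor $\R_{E/k}(-)$ yields an isomorphism of $k$-schemes $\prod_{i=1}^m \mathsf{Q}_{b_i} \xrightarrow{\sim} \mathsf{Q}^+$, hence an injection on $k$-points. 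Restricting to the closed $k$-subgroups $G_{b_i}\subseteq\mathsf{Q}_{b_i}$, the map $\mu$ is injective on $k$-points, and $\UU$ is identified with $\prod G_{b_i}(k)$ as a set.

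For the subgroup property, I would case-check the Chevalley commutator relations (C1) among the generators, then run a standard height-filtration induction. The case analysis is organised by root lengths in $\Phi^+(\mathsf{Q})$:
\begin{itemize}
    \item \emph{Very short + very short}, \emph{very short + long}, \emph{long + long}: all trivial, using respectively that $\mathsf{U}$ is abelian, (C7), and that sums of two positive long roots of $C_n$ are never roots.
    \item \emph{Short + very short}: linearity of the $\mathsf{L}$-action on $\mathsf{U}$ collapses (C1) to the single term $x_{a+b}(c\,tu)$ with $c\in\F_2$; the coefficient $tu$ lies in $K\cdot V = V = G_{a+b}(k)$, since $V$ is a $K$-subspace of $K^{1/2}$.
    \item \emph{Short + short}: in $\mathsf{L}$. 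If $a+a'$ is short, a single term with coefficient $tu\in K\cdot K = K$. If $a+a'$ is long, the Chevalley structure constant is $\pm 2$, which vanishes in characteristic $2$, so the commutator is trivial.
    \item \emph{Short + long}: in $\mathsf{L}$, a single term $x_{a+2a'}(c\,tu)$ with coefficient $tu\in K\cdot V' \subseteq K = G_{a+2a'}(k)$.
\end{itemize}
In every case $[G_{b_i}(k), G_{b_j}(k)]$ lies in a product of $G_c(k)$'s for positive roots $c = ib_i + jb_j$ (with $i,j>0$), all of which satisfy $h(c) > \max(h(b_i), h(b_j))$.

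Closure of $\UU$ under multiplication and inversion then follows by downward induction on $d$ applied to $\UU'_d := \prod_{h(b_i)\geq d}G_{b_i}(k)$ (ordered as in the given enumeration): the base case $d = 1 + \max h$ gives $\{1\}$, and in the inductive step the commutator relations above show that swapping two adjacent factors modifies the product by an element of $\UU'_{d+1}$, which is a subgroup by hypothesis and normal in $\UU'_d$. Setting $d = 1$ gives $\UU = \UU'_1$ as a subgroup of $\mathsf{Q}(k)$. The main obstacle, and the step requiring real input, is the short+short+=long case, which depends essentially on the characteristic $2$ hypothesis to kill the structure constant $\pm 2$; the remaining cases are bookkeeping with the $K$-module structure of $V$ and the containment $V' \subseteq K$.
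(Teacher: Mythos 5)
The central gap is your assumption that each generator $G_{b_i}$ lies inside the corresponding root group $\QQ_{b_i}$ of $\QQ$. This is false for the very short roots: by construction $G_b$ is a vector subgroup of $\mathsf{M}_b$, and $\mathsf{M}_b\not\subseteq\mathsf{U}$; indeed $\pi(x_b(t))=x_{2b}(t^2)$, so a very short generator has a nontrivial component in the long root group $\QQ_{2b}$ as well as in $\QQ_b\subseteq\mathsf{U}$ (in the coordinates of $\QQ_{(b)}=\QQ_b\times\QQ_{2b}$ it is the ``diagonal'' element $(t,t^2)$). Consequently $\mu$ is \emph{not} the restriction of the product-of-root-groups isomorphism to subgroups of the individual factors --- the factors $G_{b_{2i-1}}$ and $G_{b_{2i}}$ both contribute to the same $\QQ_{b_{2i}}$-coordinate --- and your injectivity argument does not stand as written. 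The big-cell idea can be repaired by actually computing the coordinates of $\mu(x_{b_1}(c_1),\dots,x_{b_m}(c_m))$: its $\QQ_{b_{2i-1}}$-coordinate is $c_{2i-1}$ and its $\QQ_{b_{2i}}$-coordinate is $c_{2i-1}^2+c_{2i}$, whence injectivity. Note that the paper instead proves the stronger statement that $\pi\circ\mu$ is injective on $k$-points; that is exactly where the hypothesis $V^{(2)}\cap V'=0$ is used, and that stronger fact is what gets exploited later (e.g.\ in Lemma \ref{bnpairlem}), so your route, even once repaired, buys less.

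The same misconception breaks your commutator analysis. For $a$ short and $b$ very short with $a+b$ very short, the commutator does not collapse to one term: because $x_b(u)$ carries the long component $x_{2b}(u^2)$, one has (computing in $\mathsf{M}$) $x_a(t)x_b(u)=x_b(u)x_{a+b}(tu)x_a(t)x_{a+2b}(tu^2)$, and the extra short-root coefficient $tu^2\in K\cdot V^{(2)}\subseteq K$ must be checked. Likewise in the short $+$ long case the Chevalley formula in type $C$ generally produces two terms, $x_{a+2a'}(tu)$ and $x_{2a+2a'}(t^2u)$; the second coefficient lies in $K^2V'\subseteq V'$ only because $V'$ is required to be a $kK^2$-subspace --- this is precisely where that hypothesis enters, and your single-term claims skip it. Finally, ``very short $+$ very short'' is not settled by ``$\mathsf{U}$ is abelian'' (the generators are not in $\mathsf{U}$); it holds because the relevant structure constant in $\SO_{2n+1}$ is $\pm 2=0$ in characteristic $2$, or by combining (C7) with the commutation of non-opposite long root groups and the abelianness of $\mathsf{U}$. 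Your height-filtration induction is fine in outline, but it only closes once these commutator computations are corrected.
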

\begin{proof}
The claim about injectivity will obviously follow if the post-composition with $\pi:\mathsf Q\to \mathsf{L}$ is injective.
Since for a very short root $a$, $\pi(x_a(t))=x_{2a}(t^2)$, we have on $k$-points, 
\[\pi\circ \mu:\left(x_{b_1}(c_1),\dots,x_{b_{m}}(c_{m})\right)\mapsto \prod_{i=1}^n x_{b_{2i}}\left(c_{2i-1}^2+c_{2i}\right)\prod_{i=2n+1}^{m}x_{b_i}(c_i)\in\Sp_{2n}(K).
\]
As the elements in the image of $\pi\circ \mu$ are entirely supported on positive roots, the image lies in $B(K)$ for $B$ a Borel subgroup of $(\Sp_{2n})_{K}$; and so does the subgroup they generate. Since the unipotent radical $\RR_{u,K}(B)$ of $B$ is isomorphic as a $K$-scheme to $\mathbb{A}^{n^2}$ via a direct product of its root groups,
two elements $(x_{b_1}(c_1),\dots,x_{b_{m}}(c_{m}))$ and $(x_{b_1}(d_1),\dots,x_{b_{m}}(d_{m}))$
have the same image in $\Sp_{2n}(K)$ if and only if $c_{2i-1}^2+c_{2i} = d_{2i-1}^2+d_{2i}$ for $1
\leq i\leq n$ and $c_i=d_i$ for $i>2n$.
Moreover, for each very short root $a_{2i-1}$ we have $c_{2i-1}$ and $d_{2i-1}$ naturally identified as elements of $V\subseteq E$; and for each long root, we have similarly $c_{2i},d_{2i}\in V'\subseteq K\subseteq E$, for $1\leq i\leq n$.
But $c_{2i-1}^2, d_{2i-1}^2\in V^{(2)}$ and $V^{(2)}\cap V'=0$, so we deduce that 
$c_{2i-1}^2+c_{2i} = d_{2i-1}^2+d_{2i}$ if and only if $c_{2i-1} = d_{2i-1}$ and $c_{2i} = d_{2i}$,
and this completes the proof of injectivity.

We now show $\UU$ is a subgroup of $\QQ(k)$. 

Let $\mathsf{x},\mathsf{y}\in \UU$. Consider $\mathsf{x}\mathsf{y}$ as a word in root elements of $\mathsf{Q}(k)$. We wish to use the commutator formula (C1) to reorder these root elements in this expression so that the element $\mathsf{x}\mathsf{y}$ is visibly equal to a member of $\UU$ by virtue of the definition of $\UU$. To wit, let $b$ be a very short root. By (C7), if $a\neq b$ is another positive root then $x_a(t)$ commutes with $x_b(u)$ unless $a$ is short and $a+b$ is another very short root. In the latter case, calculating in $\mathsf{M}(k)$ with (C1) we have \[x_a(t)x_b(u)=x_b(u)x_{a+b}(tu)x_a(t)x_{2b+a}(tu^2).\]
(One needs to check in this case that the relevant $c_{ij}=1$ in (C1), see for example \cite[Prop. 12.3.3 proof]{Car89}.)
As $t\in K$ and $u\in V$, we have $tu\in V$, so the right-hand side above is indeed an element of $\UU$. Using (C1) in $\mathsf{L}(k)=L(E)$ we may similarly reorder other pairs of root elements. After a finite number of commutations, one has $\mathsf{x}\mathsf{y}$ re-written as a product of root groups in the order specified just before (\ref{eq:m}). The above calculations show that the coefficients of each root element in that expression are already in the image of the map $\pi\circ \mu$ and so $\UU$ is a subgroup.
\end{proof}

Let $\GG$ be the (abstract) subgroup of $\QQ(k)$ generated by the $k$-points of the generators of $G$; explicitly,  \[\GG:=\langle G_{a_1}(k), G_{a_n}(k), G_{2a_n}(k),C(k),s_i\rangle.\] Our aim is to prove $\GG=G(k)$; in particular, that a root group of $G$ is isomorphic to one of $G_{a_1}$, $G_{a_n}$ or $G_{2a_n}$ depending on the length of the root. To that end we construct a $(B,N)$-pair (or Tits system) for $\GG$ in the sense of \cite[Sec.\ 6.2.6]{AB08}, \cite[Sec.\ 2.1]{Car93}. Standard results imply that $\GG$ has an abstract Bruhat decomposition, which we then compare to the algebraic Bruhat decomposition of $G$.

Recall that a pair of subgroups $(\mathcal{B},\mathcal{N})$ of an abstract group $\mathcal{G}$ form a $(B,N)$-pair if the following axioms are satisfied
\begin{enumerate}\item[(BN1)] $\mathcal{G}$ is generated by $\mathcal{B}$ and $\mathcal{N}$;
\item[(BN2)] $\mathcal{B}\cap \mathcal{N}$ is a normal subgroup of $\mathcal{N}$;
\item[(BN3)] the group $\mathcal{W}=\mathcal{N}/\mathcal{B\cap N}$ is generated by a set of elements $w_i, i\in I$ such that $w_i^2=1$;
\item[(BN4)] if $s_i\in\mathcal{N}$ maps to $w_i$ under the quotient map $\mathcal{N}\to \mathcal{W}$, and if $s$ is any element of $\mathcal{N}$, then 
\[\mathcal{B}s_i\mathcal{B}.\mathcal{B}s\mathcal{B}\subseteq \mathcal{B}s_is\mathcal{B}\cup \mathcal{B}s\mathcal{B},\]
\[\text{i.e. } s_i \mathcal{B}s\subseteq \mathcal{B}s_i s \mathcal{B}\cup \mathcal{B}s \mathcal{B};\]
\item[(BN5)] if  $s_i$ is as above, then $s_i\mathcal{B}s_i\neq \mathcal{B}$.\end{enumerate}

\begin{lemma}\label{bnpairlem}
Let $\BB$ be the subgroup of $G(k)$ generated by $\CC:=C(k)$ and $\UU$,
and let $\NN:=N_G(D)(k)$.
\begin{itemize}
\item[(i)] $\BB$ is the semidirect product of $\CC$ and $\UU$.
\item[(ii)] The subgroups $\BB$ and $\NN$ of $\GG$ form a $(B,N)$-pair for $\GG$ with Weyl group $W:=\NN/(\BB\cap \NN)$ that is generated by the images of the $s_i$ in $W$ with $1\leq i\leq n$.
\item[(iii)] The $(B,N)$-pair in (ii) is saturated; that is,
$$
\bigcap_{w\in W} s_w\BB s_w^{-1} = \CC = \BB\cap\NN.
$$
\item[(iv)] Let
$$
Z := \bigcap_{g\in \GG} g\BB g^{-1}.
$$
Then $Z=1$.
\item[(v)] $\GG$ is the disjoint union of the double cosets $\BB s_w\BB$ with $w\in W$.
\item[(vi)] Every element $x\in\GG$ has a unique expression $u_w s_w h u$, where $u_w$ is a product of positive root elements sent negative by $w$, $h\in \CC$, $u\in \UU$.
\item[(vii)] $\ker\pi|_{\GG}=1$.
\end{itemize}
\end{lemma}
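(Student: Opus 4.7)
The plan is to establish (i)--(vii) in order, with the verification of the BN-pair axioms forming the core of the argument.

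For (i), I would check that $\CC$ normalises $\UU$: conjugation by $c\in\CC=D(K)$ scales $x_a(t)$ by $a(c)\in K^*$, and each of $G_{a_1}(k)=K$, $G_{a_n}(k)=V$, and $G_{2a_n}(k)=V'$ is closed under the appropriate scaling (the last using $2a_n(c)=a_n(c)^2\in K^{*2}$ together with $K^2$-stability of $V'$); the other positive roots are $s_w$-conjugates of these and inherit the property. Then $\UU\cap\CC=1$ follows from the unique-expression statement in Lemma~\ref{scru}: a $D$-fixed element $\prod x_{b_i}(c_i)$ must have all $c_i=0$ because $D$ acts with non-trivial weight on each factor. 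This same observation powers (BN2) in (ii): any $cu\in\BB\cap\NN$ forces $u\in\UU$ to normalise $D$, whence $u=1$.

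For (ii), (BN1) is built into the definition of $\GG$; (BN2) is as above, with $\CC\trianglelefteq\NN$ automatic since $\CC=C_G(D)(k)$; (BN3) uses $s_i^2=h_{a_i}(-1)\in\CC$ from (C6), with the $s_i$ generating $W(L,D)$; and (BN5) is immediate because $s_i\UU s_i^{-1}\supseteq G_{-a_i}\not\subseteq\BB$ by unique expression again. The crucial axiom is (BN4). Writing $u\in\UU$ as $x_{a_i}(\alpha)u'$ with $u'\in\UU_i:=\prod_{0<b\neq a_i}G_b$, and using the standard fact that a simple reflection in $\Sp_{2n}$ permutes positive roots other than $a_i$ among themselves, (BN4) reduces to showing $x_{-a_i}(\alpha)\in\BB s_i\BB$ whenever $\alpha\neq 0$. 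The rank-one identity $x_{-a}(\alpha)=x_a(\alpha^{-1})h_a(\alpha^{-1})s_a(1)x_a(\alpha^{-1})$ extracted from (C6) does this once we know $\alpha^{-1}\in G_{a_i}(k)$. This is the \emph{main obstacle}: for $a_i=a_1$ (needing $n\geq 2$), $G_{a_1}(k)=K$ is a field and inversion is free; but for $a_i=a_n$, $G_{a_n}(k)=V$ is only a $K$-vector space in $E$ and is typically not closed under inversion. The workaround, mirroring the proof of Lemma~\ref{lem:C_G(D)}(iii), is that $a_n$ and $2a_n$ represent the same reflection in $W$ and $G_{2a_n}(k)=V'$ \emph{is} closed under inversion (via $t^{-1}=t^{-2}\cdot t\in K^2 V'=V'$); performing the rank-one Bruhat calculation in the long-root group and appealing to (C7) to commute $G_{\pm 2a_n}$ past root groups outside the rank-one situation delivers the desired containment.

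With (ii) in place, the remaining parts are largely formal. For (iii) I intersect $\BB$ with its $s_{w_0}$-conjugate (for $w_0$ the longest Weyl element), which contains only negative root groups apart from $\CC$, so unique expression collapses the intersection with $\UU$ to the trivial group. For (iv), $Z\subseteq\bigcap_{w\in W}s_w\BB s_w^{-1}=\CC$ by (iii); then the requirement that $[z,x_a(t)]$ lie back in $Z\subseteq\CC$ forces $a(z)=1$ for every root $a$, whence $z=1$ since the roots of $\Sp_{2n}$ span $X^*(D)$. Parts (v) and (vi) are the standard saturated-Bruhat decomposition and its uniqueness refinement (the latter using the unique expression from Lemma~\ref{scru}). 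Finally for (vii), if $x=u_w s_w h u\in\ker\pi|_\GG$ then the Bruhat decomposition of $L(E)$ forces $w=1$ (because $\pi(s_i)$ is a non-trivial Weyl element of $L$), then $\pi(h)=1$ forces $h=1$ (since $\pi|_\CC$ is the inclusion $D(K)\hookrightarrow D(E)$), and $\pi(u)=\pi(u_w)=1$ combined with the $V^{(2)}\cap V'=0$ calculation already carried out inside the proof of Lemma~\ref{scru} forces $u=u_w=1$, so $x=1$.
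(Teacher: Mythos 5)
Your overall skeleton (the axioms in turn, a Carter-style reduction for (BN4), standard references for (v)--(vi), and the treatment of (iii), (iv), (vii)) matches the paper's, but there is a genuine gap exactly where the $BC_n$ structure bites, namely in (BN4) at $i=n$. First, your reduction is set up for the wrong root system: in $\Phi(G,D)$ of type $BC_n$ the reflection $s_n$ sends \emph{both} $a_n$ and $2a_n$ negative, so your $\UU_i=\prod_{0<b\neq a_i}G_b$ is not normalised by $s_n$; one must split off $\XX_n=G_{a_n}(k)G_{2a_n}(k)$ (as the paper does, with $\Phi_n=\Phi^+\setminus\{a_n,2a_n\}$) and show ${}^{s_n}\XX_n\subseteq\BB\cup\BB s_n\BB$, not merely that single elements $x_{-a_n}(\alpha)$ lie in $\BB s_n\BB$. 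Second, your diagnosed ``main obstacle'' is not an obstacle, and the proposed workaround does not prove what is needed. In fact $V$ \emph{is} closed under inversion: for $0\neq t\in V$ we have $t^2\in V^{(2)}\subseteq K$, hence $t^{-2}\in K$ and $t^{-1}=t^{-2}\cdot t\in K\cdot V=V$ because $V$ is a $K$-subspace of $E$. Conversely, carrying out the rank-one Bruhat computation ``in the long-root group'' only treats elements $x_{-2a_n}(\beta)$ with $\beta\in V'$; the fact that $a_n$ and $2a_n$ induce the same Weyl reflection does not convert an element of $G_{-a_n}(k)$ into one of $G_{-2a_n}(k)$, and (C7) (commutation of very short with long root elements) cannot bridge this either. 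So the essential case of (BN4) --- nontrivial elements supported on the negative very short root --- is never established in your proposal.

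What actually requires care (and is what the paper checks) is that the identity $x_{-a_n}(t)=x_{a_n}(t^{-1})\,s\,h_{a_n}(t)\,x_{a_n}(t^{-1})$ takes place inside $\mathsf{M}_{\pm a_n}=\langle \mathsf{M}_{a_n},\mathsf{M}_{-a_n}\rangle\cong\R_{E/k}(\PGL_2)$ rather than an $\SL_2$; it is verified by applying $\pi$, which is injective on the $k$-points of $\mathsf{M}_{\pm a_n}$ and lands in $\R_{E/k}(\SL_2)$ where the standard computation holds, and its ingredients lie in $\BB\,s_n\,\BB$ because $t^{-1}\in V$ (as above) and $h_{a_n}(t)=h_{2a_n}(t^2)\in D(K)=\CC$ since $t^2\in K$. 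Two smaller omissions: you never verify $\NN=N_G(D)(k)\subseteq\GG$, which is needed even to regard $(\BB,\NN)$ as subgroups of $\GG$ (the paper deduces it from $C_G(D)=C$ and the injectivity of $\pi$ on $N_G(D)$); and the passage from ${}^{s_i}\BB\subseteq\BB\cup\BB s_i\BB$ to the full axiom (BN4) for arbitrary $s\in\NN$ still needs the standard two-case argument, which should at least be cited. The remaining parts of your outline are essentially the paper's arguments, with harmless variants (e.g.\ your weight argument for $\CC\cap\UU=1$ versus the paper's unipotent-versus-multiplicative one, and your direct computation in (iv) versus the paper's observation that $Z$ centralises $\UU$ and lies in the trivial centre of $\QQ$).
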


\begin{proof}
(i). We recall $\CC=D(K)$. As $\UU$ is unipotent, and $D$ is multiplicative, $\CC\cap \UU=1$. Since $\CC$ is generated by the elements $h_{a}(u)$, with $a\in\Delta$ and $u\in K^\times$, we see that $\CC$ normalises all the (closed) root groups of $\QQ$.  Then the assertion about $\BB$ follows from the relations $x_b(c)h_a(u)=h_{a}(u)x_b(u^{\langle b,a^\vee\rangle}c)$. (The interesting case is where $b$ is long and $c\in V'$; then $\langle b,a^\vee\rangle$ is always a multiple of $2$, so that $u^{\langle b,a^\vee\rangle}\in K^2$ and its product with $c$ remains in $V'$.) 

(ii). We first need to verify that $\NN\subseteq \GG$. To see this, note that since $\GG$ contains all the elements $s_i$ and $\CC$, $\pi(\NN)$ is
contained in $\pi(\GG)$. But since the weights of $D$ on $\Lie(\ker\pi)$ are non-zero, $\ker\pi\cap\NN=0$ and so $\pi$ is an isomorphism on restriction to $N_G(D)$; thus $\NN\subseteq \GG$.
We now establish each of the conditions BN1--BN5 holds.

\begin{enumerate}\item[(BN1)] Note that the elements of $\CC$ and the $s_i$ are members of $\NN$, and the subgroups $G_{a_1}(k)$, $G_{a_n}(k)$, $G_{2a_n}(k)$, are all contained in $\BB$; therefore, this follows by definition of $\GG$.
\item[(BN2)] We have seen that $\pi$ is an isomorphism on $\CC$, $\UU$ and $\NN$. 
Since $\pi(\CC)$ lies inside the maximal torus and $\pi(\UU)$
lies inside the product of the positive root groups in $\Sp_{2n}(K)$, we have $\pi(\CC)\cap \pi(\UU) = 1$.
Therefore, $\pi$ is an isomorphism on $\BB$ and $\BB\cap \NN$, using part (i).
From the Bruhat decomposition in $\Sp_{2n}(K)$, we have $\pi(\BB)\cap \pi(\NN)=\pi(\CC)=\CC$ so we are done.
\item[(BN3)] As $C_G(D)=C$, by Lemma \ref{lem:C_G(D)}, and $\NN$ contains all the elements $s_i$, we have that $\NN/\CC\cong N_L(D)/D \cong W$, hence is generated by the images of the $s_i$, which are involutions in $W$.
\item[(BN4)] For $i\leq n-1$, let $\Phi_i=\Phi^+\setminus\{a_i\}$ and $\Phi_n=\Phi^+\setminus\{a_n,a_{2n}\}.$ Then define $\UU_i$ to be the subgroup of $\UU$ generated by $G_b(k)$ for $b\in \Phi_i$. It is easy to see from the commutator formulas that $\UU_i$ is the image of the subgroup $\prod_{b\in\Phi_i} G_b(k)$ under the multiplication map $\mu$ from \eqref{eq:m}. Further, by \eqref{wdota} we see that $\UU_i$ is normalised by $s_i$. Now let $\XX_i:=G_{a_i}(k)$ for $1\leq i\leq n-1$ and $\XX_n:=G_{a_n}(k)G_{a_{2n}}(k)$. Then $\UU$ factorises as $\XX_i\UU_i$ for all $i$, using the same argument as in the proof of Lemma \ref{scru}. 

If $b$ is short or long, a standard calculation in a subgroup $\SL_2(E)$ of the Levi subgroup of $\mathsf{L}$ with roots $\pm b$, gives \[x_{-b}(t)=x_{b}(t^{-1})s_bh_{b}(t)x_{b}(t^{-1}),
\] 
where \[h_{b}(t)=x_{b}(t)x_{-b}(t^{-1})x_{b}(t)x_{b}(1)x_{-b}(1)x_{b}(1).\]
So every non-identity element of $G_{-b}(k)$ lies in $\BB s_b\BB$. 
If $b$ is very short then $G_{\pm b}$ lies in $\mathsf{M}_{\pm b}:=\langle \mathsf{M}_b,\mathsf{M}_{-b}\rangle\cong \R_{E/k}(\PGL_2)$, which is a subgroup of $\mathsf{M}$. We have $\pi:\mathsf{M}_{\pm b}\to \mathsf{L}_{\pm 2b}\cong \R_{E/k}(\SL_2)$ is injective on $k$-points, so the same calculation shows that every non-identity element of $G_{-b}(k)$ is contained in $\BB s_b \BB$ as required. Now as $\BB=\XX_i\UU_i\CC$, we get
\begin{equation} {}^{s_i}\BB\subseteq \BB\cup \BB s_i \BB.\label{negroot}\end{equation}

To establish (BN4) we now reproduce the proof one uses to investigate the Chevalley groups, as in \cite[Prop.~8.1.5]{Car89}. Suppose a product $s$ of the $s_i$ represents $w$ in the Weyl group $W$. One takes the cases $w(a_i)\in\Phi^+$ and $w(a_i)\in\Phi^-$ separately. In the first case, we calculate\begin{align*}s \BB s_i&=s \XX_i\UU_i\CC s_i\\
&= {}^s\XX_{i} s s_i \UU_i\CC \\
&\subseteq \BB s s_i \BB,\end{align*}
since ${}^s\XX_i$ is contained in a positive root group or a product of two positive root groups if $i=n$. If $w(a_i)\in \Phi^-$, then the image $w'$ of $ss_i$ has $w'(a_i)\in\Phi^+$. Then
\begin{align*}s \BB s_i&\subseteq ss_i(\BB\cup \BB s_i\BB)\qquad\text{(by \eqref{negroot})}\\
&\subseteq \BB ss_i\BB\cup \BB (ss_i \BB s_i) \BB \quad\text{(by the previous case)}\\
&\subseteq  \BB ss_i \BB \cup  \BB ( \BB ss_is_i \BB ) \BB \quad\text{(by the previous case)}\\
&\subseteq  \BB ss_i \BB \cup  \BB s \BB .\end{align*}
Finally one gets (BN4) by observing that if $g\in s_i \BB s$ then $g^{-1}\in s^{-1}  \BB s_i$. So we deduce $g^{-1}\in  \BB s^{-1}s_i \BB \cup  \BB s_i  \BB $ and $g\in  \BB s_is \BB \cup  \BB s_i  \BB $.

\item[(BN5)] For any element $s_w\in \NN$ mapping to $w\in W$, if $w\neq 1$ then there is a positive root $a$ of $\mathsf{Q}$ sent negative by the action of $w$. Therefore conjugation by $s_w$ sends $x_a(t)$ with $t\neq 0$ outside $\BB$.\end{enumerate}

(iii). The containment $\supseteq$ is clear as $\NN$ normalises $D$ (and $C=C_G(D)$). For the other containment, since $\pi$ is injective on $\BB$ it is injective on any conjugate. The result then follows from the analogous result for $L(K)$ \cite[Sec.~1.10]{Car93}.

(iv). Part (iii) implies that $Z\subseteq \CC$, and hence $Z$ normalises $\UU$. 
Since $\UU$ clearly normalizes $Z$, and $\CC\cap \UU = 1$,
we see that $Z$ in fact centralizes $\UU$. Now using formula (C5), we see that every positive root vanishes on $Z$. Indeed, taking a look at the root system of $\QQ$, one discovers that for each root $a$ there is some root $b$ such that ${}^{h_b(t)}x_a(u)=x_a(tu)$, and hence $Z$ is contained in the centre of $\QQ$, which is trivial.
%

(v). This now follows from \cite[Prop.~8.2.2, Prop.~8.2.3]{Car89}.

(vi). The proof of this follows as in \cite[Thm.~8.4.3, Cor.~8.4.4]{Car89}. 

(vii). For $x\in \ker\pi|_\GG$ write $x=b_1s_wb_2$. Since $\pi(b_1)\pi(s_w)\pi(b_2)=1$, we may use the Bruhat decomposition in $\Sp_{2n}(K)$ to see $w=1$; but then $x\in \BB$ and $1$ is the only preimage of $1$ under~$\pi$.
\end{proof}


Let $\lambda:\Gm\to D$ be a cocharacter which is positive on positive roots; i.e. for which $\langle\lambda,\alpha\rangle>0$ for all $\alpha\in\Phi^+\subset \Phi=\Phi(Q)$.

As in \cite[\S2.1]{CGP15}, define subgroup schemes of $\mathsf{Q}$ on $A$-points via 
\[P_{\mathsf{Q}}(\lambda)(A)=\{g\in \mathsf{Q}(A)\mid \lim_{t\to 0}\lambda(t)g(t)\lambda(t)^{-1}\text{ exists}\}\]
\[U_{\mathsf{Q}}(\lambda)(A)=\{g\in \mathsf{Q}(A)\mid \lim_{t\to 0}\lambda(t)g(t)\lambda(t)^{-1}=1\}.\]

We have $P_{\mathsf{Q}}(\lambda)=Z_{\mathsf{Q}}(\lambda)U_{\mathsf{Q}}(\lambda)$ by \cite[Prop.~2.1.8(2)]{CGP15}. The first factor is the Cartan subgroup $\R_{E/k}(D_E)$ and the second is the unipotent radical, $\RR_u(P_{\mathsf{Q}}(\lambda))=\prod_{a\in\Phi^+}\mathsf{Q}_a$. On the other hand, $U_{\mathsf{Q}}(-\lambda)=\prod_{a\in\Phi^-}\mathsf{Q}_a$. Now \cite[Prop.~2.1.2]{CGP15} tells us that the multiplication map $\mu:U_{\mathsf{Q}}(-\lambda)\times P_{\mathsf{Q}}(\lambda)\to G$ is an open immersion. Let $s_{w_0}$ be a word in the $s_i$ representing the longest word $w_0$ in the Weyl group. Then as $w_0$ induces the inversion map on $D$, we see that the map \[U_{\mathsf{Q}}(-\lambda)\times P_{\mathsf{Q}}(\lambda)\to G;\ (u,p)\mapsto s_{w_0}up = s_{w_0} u s_{w_0}^{-1} s_{w_0} p,\] also induces an open immersion \[\mu':U_{\mathsf{Q}}(\lambda)\times P_{\mathsf{Q}}(\lambda) \to \Omega:=U_{\mathsf{Q}}(\lambda)s_{w_0}P_{\mathsf{Q}}(\lambda).\] Now, the positive root groups of $\mathsf{Q}$ can be picked out as in \cite[Lem.~2.3.3]{CGP15}. Take $D_a=(\ker a)_\red^0$, some $\lambda_a:\Gm\to D$ with $\langle a,\lambda_a\rangle>0$ and then $\mathsf{Q}_{(a)}=U_{Z_\QQ(D_a)}(\lambda_a)$, where if $a$ is short, $\QQ_{(a)}=\QQ_a$ and if $a$ is very short, $\QQ_{(a)}=\QQ_a\times\QQ_{2a}$---the $k$-points of the two factors commute and are dense in each. With the above in place, we are now ready to prove:

\begin{lemma}\label{rootgroupswork} For each root $a$, if we let $U_a$ denote the $a$-root group of $G$, then we have $U_a = G_a$. 
In particular, $U_a$ is isomorphic to $G_{a_1}\cong \underline{K}$, $G_{a_n}\cong \underline{V}$ or $G_{a_{2n}}\cong \underline{V'}$ according to whether $a$ is short, very short or long, respectively.

Moreover, $G(k)=\GG$.\end{lemma}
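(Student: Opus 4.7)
The plan is to construct a ``big cell'' subscheme $\Sigma\subseteq G$ from the $G_a$'s and $C$, show it is open in $G$ by comparison with the abstract Bruhat decomposition of $\GG$, and then read off the root groups.

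Define $\Sigma := \prod_{a\in\Phi^-}G_a \times_k C \times_k \prod_{a\in\Phi^+}G_a$ (with some fixed ordering of factors) and its multiplication map $\nu:\Sigma\to G\subseteq\QQ$. Since each $G_a$ is closed in the root group $\QQ_a$ and $C$ is closed in $C_\QQ$, the open immersion $\mu'$ preceding the lemma makes $\nu$ a locally closed immersion, so $\Sigma$ is a smooth irreducible locally closed subscheme of $G$ with $\Sigma(k) = \UU^-\cdot\CC\cdot\UU$ (here $\UU^-$ is the evident negative analogue of the subgroup $\UU\subseteq\QQ(k)$ from Lemma~\ref{scru}). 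For each $w\in W$, an analogous construction---using the open immersion of the big cell of $\QQ$ translated by $s_w$---produces a locally closed subscheme $\tilde\Sigma_w\subseteq G$ with $\tilde\Sigma_w(k) = \BB s_w\BB$, and
\[\dim\tilde\Sigma_w = \dim C + \sum_{a\in\Phi^+\cap w\Phi^-}\dim G_a + \sum_{a\in\Phi^+}\dim G_a;\]
this is maximised uniquely at $w = w_0$ with value $\dim\Sigma$, because $w_0\Phi^- = \Phi^+$ while $w\Phi^-\cap\Phi^+ \subsetneq \Phi^+$ for $w\neq w_0$.

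To deduce that $\Sigma$ is open in $G$: since $k$ is infinite ($k$ is imperfect), each vector group $G_a$ and $C$ has Zariski-dense $k$-points, so $\GG$ is dense in $G$ (the algebraic subgroup generated by these together with the $s_i$), and each $\tilde\Sigma_w(k)$ is dense in $\tilde\Sigma_w$. Using Lemma~\ref{bnpairlem}(v),
\[G = \overline{\GG} = \bigcup_{w\in W}\overline{\tilde\Sigma_w(k)} = \bigcup_{w\in W}\overline{\tilde\Sigma_w}.\]
Since $G$ is smooth and connected (Lemma~\ref{lem:C_G(D)}(iii)) hence irreducible, some $\overline{\tilde\Sigma_w}$ equals $G$; maximality of the dimension formula above at $w=w_0$ then forces $\dim G = \dim\Sigma$, whence $\Sigma$ (being a smooth irreducible locally closed subscheme of the same dimension as the smooth irreducible $G$) is open in $G$.

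Comparing $\Sigma$ to the dynamic big cell $\Omega_G := U_G(-\lambda)\cdot C\cdot U_G(\lambda)$: both are open in $G$ and factor through the big cell of $\QQ$, so $\Sigma\subseteq\Omega_G$, and $\Sigma$ is also closed in $\Omega_G$ since its product factorisation $\prod_{a\in\Phi^-}G_a \times C \times \prod_{a\in\Phi^+}G_a$ is closed in $U_G(-\lambda)\times C\times U_G(\lambda)$. Being both open and closed in the irreducible $\Omega_G$ and non-empty, $\Sigma = \Omega_G$, so $U_G(\pm\lambda) = \prod_{a\in\Phi^\pm}G_a$ scheme-theoretically; decomposing by $D$-weights then yields $U_a = G_a$ for each root $a$, as claimed. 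Finally, for $G(k) = \GG$: given $x\in G(k)$, the non-empty open subset $\Sigma\cap x^{-1}\Sigma$ of $G$ has a $k$-point $y$ by Zariski density of $G(k)\supseteq\Sigma(k)$ in $G$; then $y$ and $xy$ both lie in $\Sigma(k)\subseteq\GG$, giving $x = (xy)y^{-1}\in\GG$.
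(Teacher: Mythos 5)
Your argument is correct, and although it follows the same broad strategy as the paper---playing the abstract Bruhat decomposition of $\GG$ from Lemma \ref{bnpairlem} off against the dynamic big cell of $\QQ$---the mechanism is genuinely different at each stage. The paper works inside the single translated cell $\Omega=U_{\QQ}(\lambda)s_{w_0}P_{\QQ}(\lambda)$: it shows the points of $\GG$ in $\Omega$ are exactly $\BB s_{w_0}\BB$, deduces by a complement/closure argument that this set is dense in $\Omega_G$, reads off the root groups by explicitly closing up the point sets (e.g.\ $\{(t,t^2+u)\mid t\in V,\,u\in V'\}$ inside $\QQ_{a_n}\times\QQ_{2a_n}$), and finally gets $G(k)=\GG$ from the algebraic Bruhat decomposition \cite[Thm.~C.2.8]{CGP15}. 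You instead assemble the candidate big cell $\Sigma$ from the $G_a$ and $C$ directly, prove it is open in $G$ by a dimension count over \emph{all} the cells $\BB s_w\BB$ (note this uses Lemma \ref{bnpairlem}(vi), not just (v), to realise each cell inside a locally closed $\tilde\Sigma_w$ of the stated dimension, together with the $\UU$-factorisation bookkeeping of Lemma \ref{scru}; also the immersion you need for $\Sigma$ itself is the untranslated $\mu$, not $\mu'$), identify $\Sigma$ with $\Omega_G$ by an open-and-closed argument in the irreducible $\Omega_G$, and recover $G(k)=\GG$ by the classical covering trick $x=(xy)y^{-1}$ with $y\in\Sigma\cap x^{-1}\Sigma$. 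Your route buys openness of $\Sigma$ by pure dimension theory and avoids invoking \cite[Thm.~C.2.8]{CGP15} at the end; the cost is having to verify the immersion property and dimension of every $\tilde\Sigma_w$ via translated big cells, where the paper only ever handles the top cell. One caution on the last line: for a multipliable (very short) root $a$ the weight decomposition should be read as the paper does, namely $U_{(a)}=G_aG_{2a}$ with $U_{2a}=G_{2a}$, so the very short root group is identified with $\underline V$ and the long one with $\underline{V'}$; the naive slice $G\cap\QQ_a$ is actually trivial since $V^{(2)}\cap V'=0$, so ``$U_a=G_a$'' must be interpreted in that sense (the same mild abuse as in the statement itself).
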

\begin{proof} Observe the closure $\overline{\GG}$ is a subgroup of $G$ containing all the generating groups for $G$, and therefore is equal to it. 

Since $\Omega$ meets $\GG$ non-trivially, it meets $G$ non-trivially, and hence $\Omega_G:=G\cap \Omega$ is non-empty and open in $G$. Now, the set of points of $\GG$ contained inside $\Omega_G(k)$ is, by Lemma \ref{bnpairlem}(v), precisely $\Omega_\GG:=\BB s_{w_0}\BB$: for if, a fortiori, $\Omega(k)$ hits another $(\BB,\BB)$-double coset in $\GG$, then 
applying $\pi$ and using the Bruhat decomposition in $L(E)$ we see that such a double coset must be determined by the same element of the Weyl group, namely $w_0$, hence is equal to $\BB s_{w_0}\BB$.

The complement $G\setminus\Omega_G$ is closed and since it contains the $k$-points $\GG\setminus\Omega_\GG$, the closure of $\GG\setminus\Omega_\GG$ does not meet $\Omega_G$. Thus $\Omega_\GG$ is dense in $\Omega_G$. Therefore the preimage $(\mu')^{-1}(\BB s_{w_0}\BB)$ is dense in $(\mu')^{-1}(\Omega_G)=(U_{\mathsf{Q}}(\lambda)\cap G) \times (P_{\mathsf{Q}}(\lambda)\cap G)$. But $U_{\mathsf{Q}}(\lambda)\cap G=U_G(\lambda)$ is isomorphic as a $k$-scheme to the product $\prod_{a\in\Phi^+} U_a$ of the root groups $U_a$ of $G$, and $P_{\mathsf{Q}}(\lambda)\cap G=Z_G(\lambda)U_{G}(\lambda)$ for $Z_G(\lambda)$ a Cartan subgroup of $G$, which according to Lemma \ref{lem:C_G(D)} is $C$. The preimage $(\mu')^{-1}(\Omega_\GG)$ is $\prod_{a\in\Phi^+} G_a(k)\times \BB$ because this set surjects onto $\Omega_\GG$ and $\mu'$ is injective.
Since $\Omega_\GG$ is dense in $\Omega_G$, the projection of this preimage to $U_\QQ(\lambda)=\prod_{a\in\Phi^+}\QQ_{a}$ can be used to determine the root groups of $G$. For $a$ short, $(a)=a$ and we have $U_a=U_{(a)}=\QQ_a\cap G$. Now $G_a(k)=K$ closes up to $G_a\cong\underline{K}$; thus a short root group of $G$ is isomorphic to $G_{a_1}$. On the other hand, we hit the factor $\QQ_{(a_n)}=\QQ_{a_n}(k)\times \QQ_{2a_n}(k)$ in the set $\{(x_{a_n}(t),x_{2a_n}(t^2)x_{2a_n}(u))\mid t\in V, u\in V'\}$. Under the isomorphism $\QQ_{a_n}\times \QQ_{2a_n}\cong (\mathbb A^2)_E$, we may write the latter as $\{(t,t^2+u)\mid t\in V, u\in V'\}\cong V\times V'$. The closure of this set is evidently the closed subgroup $\underline{V}\times \underline{V'}$ embedded via $(t,u)\mapsto (t,t^2+u)$. In $\QQ_{a_n}\times \QQ_{2a_n}$ this corresponds to the closed subgroup $G_{a_n}G_{2a_n}$. Thus a very short root group of $G$ is isomorphic to $G_{a_n}$ and a long root group of $G$ is isomorphic to $G_{2a_n}$, as required.

Now we see that $\GG = G(k)$ by using the Bruhat decomposition for $G$ with respect to $B=P_G(\lambda)$ \cite[Theorem~C.2.8]{CGP15}:
since the root groups of $G$ are as expected, we see that $B(k) = \BB$ and so the result follows.
\end{proof}

The above results furnish us with a method to see that $G$ is also pseudo-reductive. (In \cite[\S9]{CGP15} the most common  tool used is \cite[Lem.~7.2.4]{CGP15} which says that a subgroup of a pseudo-reductive group containing a Levi is also pseudo-reductive.) 

\begin{prop}The group $G$ is pseudo-reductive of minimal type; $\D(G)$ is absolutely pseudo-simple of minimal type.\label{ispseudo}
\end{prop}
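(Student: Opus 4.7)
The plan breaks into three stages: (I) show $\RR_{u,k}(G)=1$; (II) verify minimal type; (III) pass to the derived group. For (I), I would exploit the $(B,N)$-pair from Lemma~\ref{bnpairlem}. Since $B=P_G(\lambda)=C\ltimes U_G(\lambda)$ and $s_{w_0}Bs_{w_0}^{-1}=C\ltimes U_G(-\lambda)$ are minimal pseudo-parabolic $k$-subgroups and $U_G(\lambda)\cap U_G(-\lambda)=1$, their scheme-theoretic intersection is precisely $C$. The unipotent radical $N:=\RR_{u,k}(G)$ is contained in every such intersection, so $N\subseteq C$. For any root $a$ and any points $x$ of $N$ and $u$ of $U_a$, the commutator $[x,u]$ lies in $U_a$ (since $C$ normalises $U_a$ via (C5)) and in $N\subseteq C$ (by normality of $N$ and commutativity of $C$); but $U_a\cap C=1$ scheme-theoretically because $U_a$ carries a non-zero $D$-weight while $C=Z_G(D)$. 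Hence $N\subseteq Z(G)$. Unwinding (C5) with the explicit vector-group root groups from Lemma~\ref{rootgroupswork}, centrality forces $a(t)=1$ for every root $a$; since the $BC_n$-roots span $X^{\ast}(D)$ this yields $Z(G)=1$, and so $N=1$.

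For (II), Lemma~\ref{lem:C_G(D)}(i) gives $\pi(G)\subseteq \R_{K/k}(L_K)$. I would identify $G_K^{\mathrm{red}}$ with $L_K\cong(\Sp_{2n})_K$ by showing that $\pi_K$ factors through the maximal reductive quotient of $G_K$ --- essentially, it kills the unipotent subgroup scheme supported on the Frobenius kernels of the very short root groups and then induces an isomorphism on reductive quotients. Under this identification $i_G$ is nothing other than $\pi|_G$ into $\R_{K/k}(L_K)$, whose restriction to $C=\R_{K/k}(D_K)$ is the canonical closed immersion $\R_{K/k}(D_K)\hookrightarrow \R_{K/k}(L_K)$, in particular a monomorphism. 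Hence $\ker i_G\cap C=1$, which is the definition of minimal type.

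For (III), the construction and the preceding stages are compatible with base change to $k_s$, so $G_{k_s}$ is pseudo-reductive of minimal type with irreducible $BC_n$ root system and trivial centre. The derived group $\D(G)$ contains every root group of $G$, since for any root $a$ and any point $h$ of $C$ with $a(h)\neq 1$ the commutator $[h,x_a(t)]=x_a((a(h)-1)t)$ ranges over $U_a$ as $h$ varies; thus $\D(G)_{k_s}=\D(G_{k_s})$ is generated by its root groups, and irreducibility of the root system together with $Z(G_{k_s})=1$ forces it to be pseudo-simple. Minimal type for $\D(G)$ follows from that of $G$ via the compatibility of $i_{\D(G)}$ with $i_G$ supplied by \cite[Prop.~9.4.2]{CGP15}. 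The main obstacle will be Stage (II): explicitly identifying $G_K^{\mathrm{red}}$ with $L_K$ and verifying that $i_G$ is induced by $\pi$. The rest amounts to $(B,N)$-pair bookkeeping and the vector-group descriptions already established.
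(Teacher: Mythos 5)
Your Stage (I) contains a genuine gap, and it sits exactly at the heart of the proposition. You assert that $\RR_{u,k}(G)$ is contained in $B\cap s_{w_0}Bs_{w_0}^{-1}=C$ because $B=P_G(\lambda)$ and its $s_{w_0}$-conjugate are "minimal pseudo-parabolic subgroups". But $P_G(\lambda)$ need not contain $\RR_{u,k}(G)$ for a non-pseudo-reductive group: this is precisely why pseudo-parabolic subgroups are \emph{defined} in \cite{CGP15} as $P_G(\lambda)\cdot\RR_{u,k}(G)$. (Already for $G=\Ga\rtimes\Gm$ with $\Gm$ acting with weight $-1$ and $\lambda$ the identity cocharacter, one has $P_G(\lambda)=\Gm$ while $\RR_{u,k}(G)=\Ga$.) If you use the honest pseudo-parabolics $P_G(\pm\lambda)\RR_{u,k}(G)$, their intersection contains $C\cdot\RR_{u,k}(G)$ and yields no information. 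So the step "$N\subseteq C$" is circular: it amounts to assuming that the $k$-unipotent radical avoids the root groups, which is the whole difficulty (a bad choice of $V^{(2)},V'$ could in principle produce a normal unipotent subgroup hiding inside the product of root groups). The paper rules this out by a quite different mechanism: reduce to $k=k_s$, use that the $k_s$-points of the smooth connected group $\RR_{u,k_s}(G_{k_s})$ are dense and form a normal subgroup of $\GG=G(k_s)$, and then invoke the $(B,N)$-pair dichotomy for normal subgroups (\cite[Lem.~6.61]{AB08}) together with Lemma \ref{bnpairlem}(iv) ($Z=1$) and non-solvability of $\GG$. Your argument as written has no substitute for this input. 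A secondary (patchable) issue in the same stage: even granting $N\subseteq C$, your conclusion "$Z(G)=1$" is a scheme-theoretic overstatement; the kernel of all roots on $C=\R_{K/k}(D_K)$ is a non-trivial (non-smooth) subgroup scheme of $\R_{K/k}(\mu_2^n)$-type in characteristic $2$, so one must use smoothness/connectedness of $N$ and density of separable points to conclude $N=1$, which your write-up omits.

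Stages (II) and (III) are essentially the paper's route, so no objection in principle: the identification of $i_G$ with $\pi|_G$ and of $G_K^{\mathrm{red}}$ with $L_K$ is exactly Lemma \ref{igmap} (which needs the normalisation $1\in V'$ via Lemma \ref{lem:1inv'} and a Levi-subgroup argument — you correctly flag this as the real work), minimal type then follows from $\ker\pi\cap C=1$ and Lemma \ref{lem:C_G(D)}(ii), and minimal type for $\D(G)$ from \cite[Prop.~9.4.5]{CGP15}. For pseudo-simplicity of $\D(G)$, your sketch (root groups lie in $\D(G)$ via commutators with $C$, then "irreducibility plus trivial centre") should be replaced by the standard references the paper uses: perfectness of $\D(G)$ from \cite[Prop.~1.2.6]{CGP15} and then \cite[Lem.~3.1.2]{CGP15}; "irreducible root system and trivial centre forces pseudo-simple" is not a proof as stated. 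But none of this rescues Stage (I), which is where the proposal fails.
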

\begin{proof}
For the first part, since $G$ is pseudo-reductive if and only if $G_{k_s}$ is, it suffices to show that the $k_s$-points of $\RR_{u,k_s}(G_{k_s})$ are trivial since they are dense. 
Now by \cite[Prop.~2.47]{milne17}, $G_{k_s}$ is generated by the base change to $k_s$ of each of $G_{a_1}$, $G_{a_n}$, $G_{2a_n}$ and $C$ together with $s_1,\dots s_n$. Let $K':=K\otimes_k k_s$ and $E':=E\otimes_k k_s$, which are both fields, since $E/k$ and $K/k$ are purely inseparable. 
It is easy to see that $G_{k_s}$ is a $k_s$-group satisfying Definition \ref{Defn:G}; we therefore assume $k=k_s$ and proceed.

Since $\RR:=\RR_{u,k}(G)(k)$ is a normal subgroup of $\GG=G(k)$, we conclude from \cite[Lem.~6.61]{AB08} that either $\RR\subseteq Z=\bigcap_{g\in \GG}g^{-1}\BB g$ or $\RR \BB=\GG$. The latter is impossible since $\GG$ is not solvable; so by Lemma \ref{bnpairlem}(v), we deduce $\RR=1$ as required.

This proves the first part. 
Since $G$ is pseudo-reductive, $\D(G)$ is perfect by \cite[Prop.~1.2.6]{CGP15}, and hence \cite[Lem.~3.1.2]{CGP15} implies that $\D(G)$ is absolutely pseudo-simple. 
For the final point, that $\D(G)$ has minimal type, recall that the quotient map $\pi:\mathsf Q\to \mathsf{L}$ restricts to a closed immersion on $C$; in particular, the intersection of $\ker \pi$ with $C$ is trivial. 
Hence $G$ is of minimal type by Lemma \ref{lem:C_G(D)}(ii). It is then immediate from \cite[Prop.~9.4.5]{CGP15} that $\D(G)$ is of minimal type.
\end{proof}


Given an arbitrary absolutely pseudo-simple pseudo-split $k$-group $G$ of minimal type, one wishes to characterise it up to isomorphism by the data of the form given in 
Definition \ref{Def:data}. In particular, one wants to calculate the minimal field of definition of its unipotent radical. Owing to the maps (\ref{canonemb}) and (\ref{importantmap}), this is tantamount to finding a Levi subgroup. For the groups we have constructed in Definition \ref{Defn:G}, this process is rather easier if one assumes $1\in V'$.

To that end, we give our version of \cite[Prop.~9.7.10]{CGP15} which discusses a natural automorphism $i_\lambda$ of $\Sp_{2n}$ scaling the long root groups by $\lambda$ and centralising $C$ and the simple short root groups; \textit{loc.~cit.} extends $i_\lambda$ to an isomorphism $G\to i_\lambda(G)$ by passing through the group law construction. We are able to take a more undemanding approach by viewing $i_\lambda$ as conjugation inside $\mathsf{Q}$.

Consider $\Sp_{2n}(E)$ as elements of $\GL_{2n}(E)$ preserving the standard symplectic form $J=\begin{pmatrix}0& I\\-I& 0\end{pmatrix}$, i.e. $g^TJg=J$. We identify $D$ with a diagonal torus. For $\lambda\in E^\times$ one checks $h(\lambda):=\begin{pmatrix}\lambda I& 0\\0 &\lambda^{-1}I\end{pmatrix}\in \Sp_{2n}(E)$, defining a cocharacter $h:\Gm\to \mathsf{L}$. Conjugation by $h(\lambda)$ gives
\[h(\lambda)\begin{pmatrix}a & b\\c &d\end{pmatrix} h(\lambda)^{-1}=\begin{pmatrix}a & \lambda^2 b\\\lambda^{-2}c &d\end{pmatrix},\]
where $a$, $b$, $c$ and $d$ are $n\times n$ block matrices. The centraliser $\mathsf{L_0}$ of the image $D_0$ of $h$ is the stabiliser of the two totally isotropic subspaces $\langle e_1,\dots,e_n\rangle$ and $\langle e_{n+1},\dots, e_{2n}\rangle$. As such $\mathsf{L_0}$ is a Levi subgroup of $\Sp_{2n}$ of type $A_{n-1}$, with central torus $D_0$; and indeed $D_0$ centralises the short simple root groups of $\mathsf{L}$ and their negatives. On the other hand, since a long root group is contained in an $\R_{E/k}(\Sp_{2})\cong\R_{E/k}(\SL_2)$-subgroup of $\mathsf{L}$ stabilising a non-degenerate $2$-dimensional subspace of $\overline{X}$, we can take $a=b=d=1$, $c=0$ in the matrix above with $n=1$, and observe that $h(\lambda)$ scales a positive long root group by $\lambda^2$; similarly, $h(\lambda)$ scales a negative long root group by $\lambda^{-2}$. As the set of weights of $D$ on the normal subgroup $\mathsf{U}$ of $\mathsf{Q}$ is the set of very short roots, and every very short root is half a long root, we see that conjugation by $h(\lambda)$ scales a positive very short root group by $\lambda$ and its negative by $\lambda^{-1}$.
\begin{lemma}\label{lem:1inv'} Suppose $G$ is constructed from data $(K/k,V^{(2)},V')$ as in Definition \ref{Def:data}; and $\lambda\in E^\times$. Then conjugation by $h(\lambda)$ induces an automorphism of $\mathsf{Q}$ in which $G$ is sent to the isomorphic subgroup $G'$ of $\mathsf{Q}$ constructed via the data $(K/k,\lambda^2 V^{(2)},\lambda^2 V')$.

Thus $G$ is $k$-isomorphic to a group constructed from data in which $1\in V'$.\label{1inV'}\end{lemma}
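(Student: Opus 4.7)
The plan is to handle the two assertions in sequence, exploiting the explicit action of $h(\lambda)$ on root groups of $\mathsf{Q}$ recorded immediately before the statement.

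For the first assertion, I would check generator-by-generator that $h(\lambda) G h(\lambda)^{-1} = G'$ inside $\mathsf{Q}$. Since $h(\lambda)$ lies in the central torus $D_0$ of the type-$A_{n-1}$ Levi $\mathsf{L}_0 \subseteq \mathsf{L}$, it centralises the short simple root group $G_{a_1} = \underline{K}$; since $h(\lambda) \in \R_{E/k}(D_E)$, it centralises the Cartan $C = \R_{K/k}(D_K)$. By the weight computation recorded above the lemma, conjugation sends $G_{a_n} = \underline{V}$ to $\underline{\lambda V}$, whose square is $\lambda^2 V^{(2)}$, and sends $G_{2a_n} = \underline{V'}$ to $\underline{\lambda^2 V'}$. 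Each Weyl generator $s_i$ for $i<n$ is fixed because the short simple roots pair trivially with $h(\lambda)$; while $h(\lambda) s_n h(\lambda)^{-1} = h_{2a_n}(\lambda^2) s_n$ by relation (C6), and $h_{2a_n}(\lambda^2) \in D(K) = C \subseteq G'$, so the modified Weyl element still lies in $G'$. Note that non-simple short root groups of $G$, obtained by Weyl-conjugating $\underline{K}$, may be scaled by $\lambda^2 \in K^\times$, but remain equal to $\underline{K}$.

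In parallel, I need to verify that the scaled triple $(K/k, \lambda^2 V^{(2)}, \lambda^2 V')$ satisfies Definition \ref{Def:data}. Since $\lambda^2 \in E^2 \subseteq K$, the $K^2$-stability of $\lambda^2 V^{(2)}$ and the $kK^2$-stability of $\lambda^2 V'$ are immediate, as are inclusion in $K$, finite $K^2$-dimension, nontriviality, and the disjointness $\lambda^2 V^{(2)} \cap \lambda^2 V' = \lambda^2 (V^{(2)} \cap V') = 0$. Moreover the subfield $K_0 = k\langle V^{(2)} \oplus V'\rangle$ is preserved, because the common rescaling cancels in ratios of nonzero elements of $V^{(2)} \oplus V'$; so the $n=1$ side condition $K_0 = K$ persists.

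For the ``Thus'' clause, I would select $\lambda \in E^\times$ so that $\lambda^{-2}$ is a nonzero element of $V'$; the first part then transports $G$ $k$-isomorphically onto the group built from $(K/k, \lambda^2 V^{(2)}, \lambda^2 V')$, for which $1 = \lambda^2 \cdot \lambda^{-2}$ belongs to the rescaled $V'$. The hard part will be finding such a $\lambda$: it exists precisely when $V'$ meets $(E^\times)^2 = (E^2)^\times$, and since $E^2 = K^2[V^{(2)}]$, this reduces to producing a nonzero element of $V' \cap K^2[V^{(2)}]$. I anticipate extracting one by combining the $kK^2$-stability of $V'$ with the nontriviality of $V^{(2)}$ (whose nonzero elements are themselves squares in $E$), which should supply $\lambda^{-2}$ as an appropriate $K^2$-multiple of an element of $V'$.
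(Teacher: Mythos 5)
Your first part is essentially the paper's argument (centralisation of $G_{a_1}$ and $C$, the scalings of $G_{a_n}$ and $G_{2a_n}$, and ${}^{h(\lambda)}s_n=h_{2a_n}(\lambda^2)s_n$ with $h_{2a_n}(\lambda^2)\in C(k)$), and the verification that the rescaled triple again satisfies Definition \ref{Def:data} is correct.

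The gap is in the ``Thus'' clause. You keep $\lambda$ inside the \emph{original} $E^\times$ and so need a nonzero element of $V'\cap E^2$, where $E^2=K^2[V^{(2)}]$; but no such element need exist, and your anticipated mechanism ($kK^2$-stability of $V'$ plus nontriviality of $V^{(2)}$) cannot produce one, precisely because $V^{(2)}\cap V'=0$. Concretely, take $k=\mathbb{F}_2(x,y)$, $K=k(x^{1/2},y^{1/2})$, so $K^2=k$; with $V^{(2)}=K^2$ and $V'=k\cdot x^{1/2}$ (a legitimate triple for $n\geq 2$) one gets $E=K$ and $V'\cap E^2=V'\cap k=0$, so there is no $\lambda\in E^\times$ with $\lambda^{-2}\in V'\setminus\{0\}$. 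The paper avoids this by not insisting that $\lambda$ lie in the originally chosen $E$: it first observes that enlarging $E$ to any bigger finite intermediate extension of $K^{1/2}/K$ does not change $G$ (the construction sits inside $\R_{E'/k}(Q_{E'})\supseteq \R_{E/k}(Q_E)$), then picks $0\neq v\in V'$, adjoins $\sqrt{v}$ to $E$, and conjugates by $h(\sqrt{v})$, producing the data $(K/k,vV^{(2)},vV')$; here $1\in vV'$ because $v^{-1}=(v^{-1})^2v\in K^2v\subseteq V'$. If you add this enlargement step (or equivalently allow $\lambda\in (K^{1/2})^\times$ with $\lambda^2\in K$), your argument closes; as written, the existence claim for $\lambda$ fails.
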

\begin{proof}We consider the action of conjugation by $h(\lambda)$ on the generators in Definition \ref{Def:data}. We have already checked the action of $h(\lambda)$ on the simple root groups and $C$. Thus for $i<n$, we have $s_i:=x_{a_i}(1)x_{-a_i}(1)x_{a_i}(1)$ is centralised by $h(\lambda)$. If $i=n$, then  conjugation by $h(\lambda)$ sends $s_i=x_{2a_n}(1)x_{-2a_n}(1)x_{2a_n}(1)$ to $x_{2a_n}(\lambda^2)x_{-2a_n}(\lambda^{-2})x_{2a_n}(\lambda^2)=h_{2a_n}(\lambda^2)\cdot s_i$. Since $h_{2a_n}(\lambda^2)\in C(k)$, ${}^{h(\lambda)}s_i\in G'$ for all $i$, whence the first claim.

Now let $0\neq v\in V'\subseteq K$. 
It does no harm in our constructions to enlarge $E$ to a bigger finite intermediate extension in $K^{1/2}/K$, so we may assume $\sqrt{v}\in E$. Conjugating by $h(\sqrt{v})$ takes $G$ to a group $G'$ defined by the data $(K/k,vV^{(2)},vV')$. Since $V'$ is a $K^2$-subspace it follows that $1\in vV'$.\end{proof}

\begin{lemma}\label{igmap} If $1\in V'$, then the derived group $\D(G)$ contains the canonical $k$-subgroup $L\subseteq \R_{K/k}(L_K)$. Indeed $L$ is a Levi subgroup of both $G$ and $\D(G)$.

In any case, the minimal field of definition of $\RR_u(G_{\bar k})$ is $K$ and the restriction $\pi|_G$ of the quotient map $\pi:\mathsf Q\to \mathsf{L}$ identifies with the map $i_G:G\to \R_{K/k}(\Sp_{2n})$.
\end{lemma}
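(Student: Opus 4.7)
My strategy is to prove Part 1 assuming $1\in V'$, then use Lemma \ref{lem:1inv'} to reduce Part 2 to the same assumption and derive the Levi property of $L$ along the way.

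\emph{Part 1 (assuming $1\in V'$).} I would verify $L\subseteq G$ on Chevalley generators. Since $V'$ is a $k$-subspace containing $1$, it contains $k$, hence $L_{2a_n}(k)\subseteq G_{2a_n}(k)$, and Weyl conjugation by the $s_i$ (which lie in $G$ by definition) covers every long root group of $L$. When $n\geq 2$, the inclusion $k\subseteq K$ gives $L_{a_1}(k)\subseteq G_{a_1}(k)$, and Weyl conjugation covers every short root group. The maximal torus $D\subseteq L$ lies in $C$ via $j_{D,K}$. Since $L$ is split, its $k$-points are Zariski dense, so $L(k)\subseteq G(k)$ upgrades to a closed immersion $L\hookrightarrow G$. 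As $L=\Sp_{2n}$ is perfect, $L=\D(L)\subseteq\D(G)$; the Levi claim will follow from Part 2.

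\emph{Part 2.} Lemma \ref{lem:1inv'} allows me to assume $1\in V'$: conjugation by $h(\lambda)\in\mathsf{L}\subseteq\mathsf{Q}$ commutes with the projection $\pi$, so both the minimality of $K$ and the identification of $\pi|_G$ with $i_G$ are preserved when passing to an isomorphic $G'$ built from scaled data. By Lemma \ref{lem:C_G(D)}(i), $\pi|_G$ lands in $\R_{K/k}(L_K)$ and thus corresponds, under Weil restriction adjunction, to a $K$-homomorphism $\tilde\pi:G_K\to L_K$. The central observation is that $\pi$ restricts to the identity on the canonical $L\subseteq\mathsf{L}$, so $L\hookrightarrow G\xrightarrow{\pi|_G}\R_{K/k}(L_K)$ coincides with $j_{L,K}$; translating back through the adjunction gives $\tilde\pi|_{L_K}=\mathrm{id}_{L_K}$. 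Since $L_K$ is reductive, $\tilde\pi$ factors through the quotient $q:G_K\to G_K^{\red}$ as $\tilde\pi=\phi\circ q$ for some $\phi$, and restricting to $L_K$ shows $q|_{L_K}$ is a section to $\phi$, in particular injective. Combined with \cite[Thm.~2.3.10]{CGP15} giving $\dim G_K^{\red}=\dim L_K=2n^2+n$, the image of $q|_{L_K}$ is closed connected of full dimension in $G_K^{\red}$, so $q|_{L_K}$ is in fact an isomorphism. This both shows $L$ is a Levi of $G$ (hence of $\D(G)$) and yields $\tilde\pi=q$, i.e.\ $\pi|_G=i_G$.

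For minimality of $K$: if $K_0\subsetneq K$ were the true minimal field, then $G_{K_0}^{\red}$ would be a $K_0$-form of $L=\Sp_{2n}$; since it contains the split torus $D_{K_0}$, it is split and hence isomorphic to $L_{K_0}$, and $i_G$ would factor through $\R_{K_0/k}(L_{K_0})\hookrightarrow\R_{K/k}(L_K)$, forcing $\pi(G)(k)\subseteq L(K_0)$. For $n\geq 2$ this contradicts the fact that $\pi(G_{a_1}(k))$ supplies short-root parameters filling all of $K$. For $n=1$, the images of $G_{a_n}$ and $G_{2a_n}$ in the long root group of $L$ contribute the parameter set $V^{(2)}+V'$, whose field-closure over $k$ equals $K_0=K$ by the additional requirement imposed in Definition \ref{Defn:G} for $n=1$---again a contradiction.

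\emph{Main obstacle.} The crux is the identification $\tilde\pi|_{L_K}=\mathrm{id}_{L_K}$, which rests on carefully tracking the compatibility of the three Weil restriction adjunctions $L\hookrightarrow\R_{K/k}(L_K)\hookrightarrow\mathsf{L}$ with $\pi$. Once that is in hand, the Levi property and minimality of $K$ follow by routine dimension and root-parameter bookkeeping together with the invocation of \cite[Thm.~2.3.10]{CGP15}.
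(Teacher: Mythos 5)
Your Part 1 and the opening of Part 2 track the paper's argument closely: the paper likewise reduces to $1\in V'$ via Lemma \ref{lem:1inv'} and works with the adjoint map $\psi=q_{L_K}\circ(\pi|_G)_K:G_K\to L_K$, noting that its restriction to $L_K$ is the identity. (Your derivation of $L\subseteq\D(G)$ from perfectness of $\Sp_{2n}$ is a harmless variant of the paper's commutator argument.) The divergence, and the one genuine soft spot, is what you do next. You factor $\tilde\pi$ through ``the quotient $q:G_K\to G_K^{\red}$'' and invoke $\dim G_K^{\red}=2n^2+n$ via \cite[Thm.~2.3.10]{CGP15}. But $G_K^{\red}$, as a \emph{reductive} quotient of $G_K$ of that dimension, only makes sense once you know $\RR_u(G_{\bar k})$ is defined over $K$ --- which is precisely part of what the lemma asserts; if instead $G_K^{\red}$ means $G_K/\RR_{u,K}(G_K)$, that quotient need not be reductive and Thm.~2.3.10 does not control its dimension, so the dimension count is not yet available over $K$. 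The argument is repairable: run it over $\bar k$ (where $q:G_{\bar k}\to G_{\bar k}^{\red}$ exists and the dimension bound from the $BC_n$ root system applies, using Proposition \ref{ispseudo} and Lemma \ref{rootgroupswork}), conclude $\ker\tilde\pi_{\bar k}=\RR_u(G_{\bar k})$, and then observe that $\ker\tilde\pi$ is a $K$-subgroup, giving the upper bound on the field of definition. The paper avoids this detour and the appeal to Thm.~2.3.10 altogether: from $G_K\cong\ker\psi\rtimes L_K$ it deduces that $\ker\psi$ is smooth and connected, and since $\psi$ is a surjection between groups of the same rank $n$, $\ker\psi$ contains no tori and is therefore unipotent.

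Your minimality argument (that the field of definition cannot be a proper subfield of $K$) also needs more care, and here the paper's route is both different and simpler. You claim that if the radical were defined over a smaller field then $\pi|_G$ ``would factor through $\R_{K_0/k}(L_{K_0})$, forcing $\pi(G)(k)\subseteq L(K_0)$''; but the identification of the hypothetical split quotient with $L$ over the smaller field is only canonical up to an automorphism of $L_K$ and a rescaling of root-group parametrisations, so the literal containment needs justification (one can patch it using that $1$ lies in the relevant parameter sets, but you do not address this, and your reuse of the symbol $K_0$ clashes with the paper's $K_0=k\langle V^{(2)}\oplus V'\rangle$). The paper instead gets the lower bound in one line: $G$ contains the Cartan subgroup $C\cong\R_{K/k}(\Gm^n)$ (Lemma \ref{lem:C_G(D)}), and $C_K\cap\ker\psi$ is the geometric unipotent radical of $C_K$, whose minimal field of definition is $K$; hence the field of definition for $G$ contains $K$. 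I would adopt that argument rather than the root-parameter bookkeeping, which in particular makes your separate treatment of the $n=1$ case unnecessary.
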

\begin{proof}  Since $[h_a(c),x_b(t)]=x_b(c^{\langle b,a\rangle}t+t)$, we can reproduce any root element  as a commutator; thus $\D(G)$ contains all the root groups of $G$. As $1\in V'$, we have $k\subseteq V'$ and so $x_{2b}(t)\in G(k)$ for $t\in k$ for any long root $2b$. Besides, $x_a(t)\in G(k)$ for all short roots $a$ and $t\in k$. As these elements are a set of generators of $L$, we have $L\subset \D(G)$.

For the second part, by Lemma \ref{1inV'}, we may assume $1\in V'$. Now, we argue as in Step 3 of the proof of \cite[Thm.~9.8.1]{CGP15}. Let \[q_{L_K}: \R_{K/k}(L_K)_K \to L_K\] be the $K$-morphism associated to the identity map $\R_{K/k}(L_K) \to \R_{K/k}(L_K)$ under the adjunction property of Weil restriction and extension of scalars. Since the composition \[L_K\longrightarrow G_K\stackrel{(\pi|_G)_K}{\longrightarrow} \R_{K/k}(L_K)_K\stackrel{q_{L_{\!K}}}\longrightarrow L_K\] is an isomorphism, we have $G_K\cong \ker\psi\rtimes L_K$ where $\psi=q_{L_K}\circ(\pi|_G)_K$. Then as $G_K$ is smooth and connected, so is $\ker\psi$. Lastly, as $\psi$ is a surjective map between two groups of rank $n$, $\ker\psi$ contains no $K$-tori. Hence $\ker\psi$ is unipotent and so the minimal field of definition of $\RR_{u}(G_{\bar k})$ is contained in $K$. Since $G$ contains the subgroup $C\cong \R_{K/k}(\Gm^n)$, and $C_K\cap \ker\psi$ is the geometric unipotent radical of $C_K$, the minimal field of definition of $\RR_{u}(G_{\bar k})$ must contain $K$. 

Moreover, the map $\psi$ must be the quotient of $G_K$ by its unipotent radical and by naturality of Weil restriction, we see $i_G:G\to\R_{K/k}(L_K)$ identifies with the restriction of $\pi$ to $G$.
\end{proof}

\begin{remark} In fact, Step 5 of the proof of \cite[Thm.~9.8.1]{CGP15} shows that $K$ is also the minimal field of definition of $\RR_u(\D(G)_{\bar k})$. Evidently it is a subfield of $K$. To show the other inclusion, one deals with the case $n=1$ separately and assuming $n\geq 2$, one shows that for a short root $b$, the subgroup with $k$-points $\langle x_b(t),x_{-b}(t)\mid t\in K\rangle$ is isomorphic to $\R_{K/k}(\SL_2)$ and contains a Cartan subgroup isomorphic to $\R_{K/k}(\Gm)$, for which the minimal field of definition of the geometric unipotent radical is $K$.
 \end{remark}

In case the rank $n$ of $G$ is $2$, there are more possibilities for the isomorphism class of $G$, which we describe now. For a visual representation, see Figure \ref{BC2} below. We need to fix some additional notation: 
Given $(K/k,V^{(2)},V')$ as in Definition \ref{Def:data}, take $V''$ a proper $K_0$-subspace of $K$ such that $k\langle V''\rangle=K$ where $k\langle V''\rangle$ denotes the subfield of $K$ generated by $k$ and the ratios of nonzero elements of $V''$. 
For the simple roots $a_1,a_2,2a_2$, let $G_{a_2}(k)=V$ and $G_{2a_2}(k)=V'$ as before, but set $G_{a_1}=\underline{V''}$ the vector subgroup of the $a_1$-root subgroup of $\mathsf{L}$ whose $k$-points are $V''\subseteq K\subseteq E=\mathsf{L}_{a_1}(k)$. 
Finally, take $\mathsf{s}_1=s_{a_1}(v_1)$ and $\mathsf{s}_2=s_{2a_2}(v_2)$, for some $0\neq v_1\in V''$ and $0\neq v_2\in V'$.

\begin{figure}[b]\begin{mdframed}
\centering
\captionsetup{font=small,labelfont=bf,
   justification=justified,
   format=plain}
   
    \begin{minipage}[b]{0.38\textwidth}
    \centering
\usetikzlibrary{arrows,positioning}
\begin{tikzpicture} [%
    blue/.style = {circle,fill=blue,text=black,inner sep=3pt},
    red/.style = {circle,fill=red,text=black,inner sep=3pt},
   green/.style = {circle,fill=green,text=black,inner sep=3pt}, 
    line/.style={->,shorten >=0.4cm,shorten <=0.4cm},thick, scale=0.6, every node/.style={transform shape}]

\draw [brown, thin,fill=none,-] (-0.2,-0.3) node [above left, black] {} to [out=-120,in=30](-2.1,-2.1) node[left, black] {$(V'')^*_{K/k} \times (V_0)^*_{K_0/k}$};
\draw [black, thick,fill=none,->_______________________] (0,0) node [above left, black] {} -- (-2,0) node[above left, black] {$V\hspace{0.6mm}$};
\draw [black, thick,fill=none,->______________________] (-2,0) node [above left, black] {} -- (-4,0) node[above left, black] {$V''\hspace{-0.2mm}$};
\draw [black, thick,fill=none,->_______________________] (0,0) node [above left, black] {} -- (2,0) node[above right, black] {$\hspace{0.4mm}V$};
\draw [black, thick,fill=none,->______________________] (2,0) node [above left, black] {} -- (4,0) node[above right, black] {$V''$};
\draw [white, thick,fill=none,->__________] (0,0) node [above left, black] {} -- (0,1) node[left, black] {$\boldsymbol{a_2}\hspace{0.3mm}$};
\draw [black, thick,fill=none,->__________] (0,0) node [above left, black] {} -- (0,1) node[above right, black] {$\hspace{0.4mm}V$}; 
\draw [black, thick,fill=none,->__________] (0,0) node [above left, black] {} -- (0,-1) node[below right, black] {$\hspace{0.4mm}V$};
\draw [white, thick,fill=none,->__________] (0,1) node [above left, black] {} -- (0,2) node[left, black] {$\boldsymbol{2a_2}\hspace{0.3mm}$}; 
\draw [black, thick,fill=none,->__________] (0,1) node [above left, black] {} -- (0,2) node[above right, black] {$V''$}; 
\draw [black, thick,fill=none,->__________] (0,-1) node [above left, black] {} -- (0,-2) node[below right, black] {$V''$};
\draw [black, thick,fill=none,->___________________________] (0,0) node [above left, black] {} -- (2,1) node[above right, black] {$V'$};
\draw [black, thick,fill=none,->___________________________] (0,0) node [above left, black] {} -- (-2,-1) node[below left, black] {$V'$};
\draw [white, thick,fill=none,->___________________________] (0,0) node [above left, black] {} -- (2,-1) node[above, black] {$\boldsymbol{a_1}$};
\draw [black, thick,fill=none,->___________________________] (0,0) node [above left, black] {} -- (2,-1) node[below right, black] {$V'$};
\draw [black, thick,fill=none,->___________________________] (0,0) node [above left, black] {} -- (-2,1) node[above left, black] {$V'$};
\node[blue] at (2,0) {};
\node[blue] at (-2,0) {};
\node[blue] at (0,1) {};
\node[blue] at (0,-1) {};
\node[green] at (2,1) {};
\node[green] at (-2,1) {};
\node[green] at (2,-1) {};
\node[green] at (-2,-1) {};
\node[red] at (4,0) {};
\node[red] at (-4,0) {};
\node[red] at (0,2) {};
\node[red] at (0,-2) {};
\node[black] at (4,2) {\scalebox{1.2}{$BC_2$}};

\draw [brown, thin,fill=none,-] (-0.2,-6.3) node [above left, black] {} to [out=-120,in=30](-1.8,-8.1) node[left, black] {$(V'')^*_{K/k} \times (V^{(2)})^*_{K_0/k}$};
\draw [black, thick,fill=none,->_______________________] (0,-6) node [above left, black] {} -- (-2,-6) node[above left, black] {$V\hspace{0.6mm}$};
\draw [black, thick,fill=none,->_______________________] (0,-6) node [above left, black] {} -- (2,-6) node[above right, black] {$\hspace{0.4mm}V$};
\draw [white, thick,fill=none,->__________] (0,-6) node [above left, black] {} -- (0,-5) node[left, black] {$\boldsymbol{a_2}\hspace{0.3mm}$};
\draw [black, thick,fill=none,->__________] (0,-6) node [above left, black] {} -- (0,-5) node[above right, black] {$\hspace{0.4mm}V$}; 
\draw [black, thick,fill=none,->__________] (0,-6) node [above left, black] {} -- (0,-7) node[below right, black] {$\hspace{0.4mm}V$};
\draw [black, thick,fill=none,->___________________________] (0,-6) node [above left, black] {} -- (2,-5) node[above right, black] {$V'$};
\draw [black, thick,fill=none,->___________________________] (0,-6) node [above left, black] {} -- (-2,-7) node[below left, black] {$V'$};
\draw [white, thick,fill=none,->___________________________] (0,-6) node [above left, black] {} -- (2,-7) node[above, black] {$\boldsymbol{a_1}$};
\draw [black, thick,fill=none,->___________________________] (0,-6) node [above left, black] {} -- (2,-7) node[below right, black] {$V'$};
\draw [black, thick,fill=none,->___________________________] (0,-6) node [above left, black] {} -- (-2,-5) node[above left, black] {$V'$};
\node[blue] at (2,-6) {};
\node[blue] at (-2,-6) {};
\node[blue] at (0,-5) {};
\node[blue] at (0,-7) {};
\node[green] at (2,-5) {};
\node[green] at (-2,-5) {};
\node[green] at (2,-7) {};
\node[green] at (-2,-7) {};
\node[black] at (3.5,-4.5) {\scalebox{1.2}{$B_2$}};

\draw [brown, thin,fill=none,-] (-0.2,-12.2) node [above left, black] {} to [out=-140,in=20](-2.8,-13.5) node[left, black] {$(V^{(2)})^*_{K_0/k}$};
\draw [black, thick,fill=none,->_______________________] (0,-12) node [above left, black] {} -- (-2,-12) node[above left, black] {$V\hspace{0.6mm}$};
\draw [white, thick,fill=none,->_______________________] (0,-12) node [above left, black] {} -- (2,-12) node[below, black] {\textcolor{white}{\large T}$\boldsymbol{a_1\!+\! a_2}$\textcolor{white}{T}};
\draw [black, thick,fill=none,->_______________________] (0,-12) node [above left, black] {} -- (2,-12) node[above right, black] {$\hspace{0.4mm}V$};
\draw [white, thick,fill=none,->__________] (0,-12) node [above left, black] {} -- (0,-11) node[left, black] {$\boldsymbol{a_2}\hspace{0.3mm}$};
\draw [black, thick,fill=none,->__________] (0,-12) node [above left, black] {} -- (0,-11) node[above right, black] {$\hspace{0.4mm}V$}; 
\draw [black, thick,fill=none,->__________] (0,-12) node [above left, black] {} -- (0,-13) node[below right, black] {$\hspace{0.4mm}V$};
\node[blue] at (2,-12) {};
\node[blue] at (-2,-12) {};
\node[blue] at (0,-13) {};
\node[blue] at (0,-11) {};
\node[black] at (3.5,-11) {{\scalebox{1.2}{$(A_1)^2$}}};

\node[circle,fill=black,text=black,inner sep=1.4pt] at (3.04,-9.6) {};
\node[circle,fill=black,text=black,inner sep=1.4pt] at (3.28,-9.25) {};
\node[circle,fill=black,text=black,inner sep=1.4pt] at (3.4,-8.85) {};
\node[circle,fill=black,text=black,inner sep=1.4pt] at (-3.04,-9.6) {};
\node[circle,fill=black,text=black,inner sep=1.4pt] at (-3.28,-9.25) {};
\node[circle,fill=black,text=black,inner sep=1.4pt] at (-3.4,-8.85) {};

\node[circle,fill=black,text=black,inner sep=1.4pt] at (3.04,-3.6) {};
\node[circle,fill=black,text=black,inner sep=1.4pt] at (3.28,-3.25) {};
\node[circle,fill=black,text=black,inner sep=1.4pt] at (3.4,-2.85) {};
\node[circle,fill=black,text=black,inner sep=1.4pt] at (-3.04,-3.6) {};
\node[circle,fill=black,text=black,inner sep=1.4pt] at (-3.28,-3.25) {};
\node[circle,fill=black,text=black,inner sep=1.4pt] at (-3.4,-2.85) {};
\end{tikzpicture}
\end{minipage}\hfill
\begin{minipage}[b]{0.6\textwidth}
\caption{Illustration of the structure of the group $G_{K'/k,V^{(2)},V',V''}$ with root system $BC_2$ and Dynkin diagram $\dynkin[scale=2,labels={a_2,\!\!\!\!\!\! a_1,2 a_2}]{A}[2]{oo}$. Each layer indicates a subgroup generated by root groups of certain lengths. Above each root appears the vector space given by the $k$-points of the corresponding root group. Cartan subgroups are also indicated.}\label{BC2}
\end{minipage}
\end{mdframed}
\end{figure}
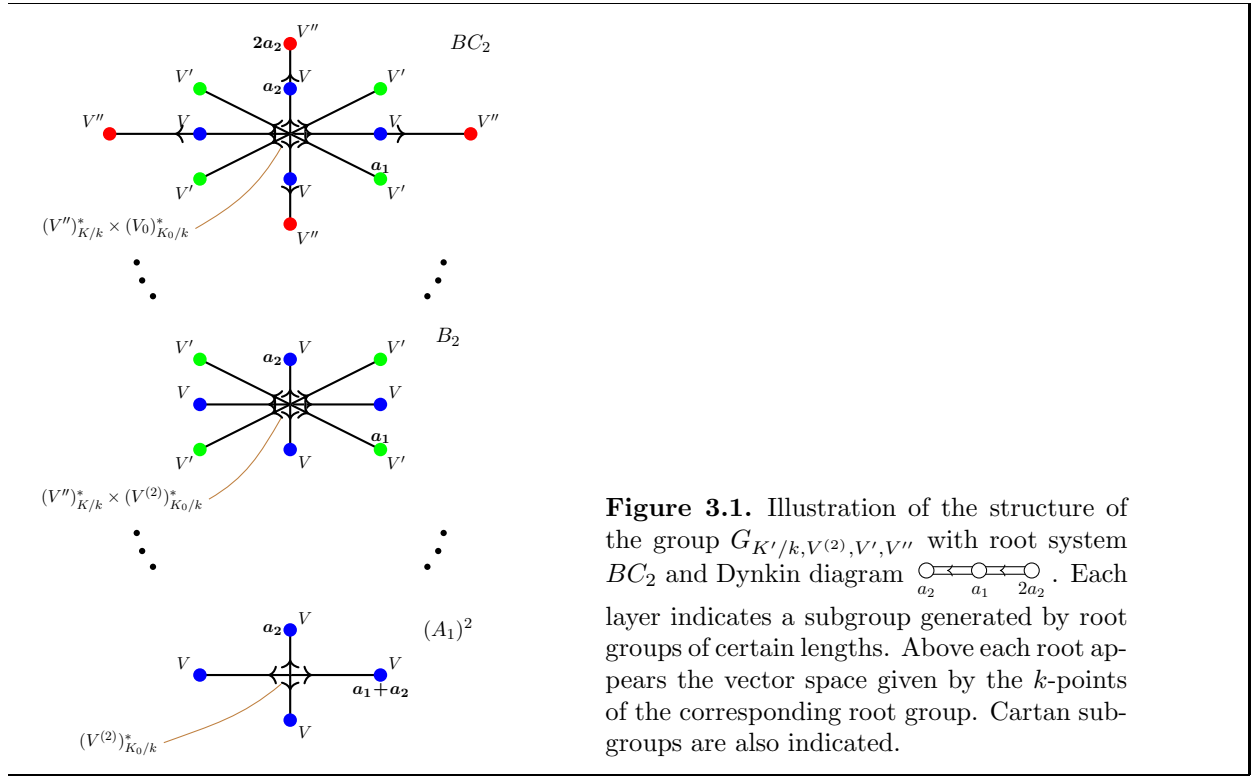

\begin{prop}\label{prop:othertype}  The subgroup 
\[G:=G_{K/k,V^{(2)},V',V''}:=\langle G_{a_1},G_{a_2},G_{2a_2},\mathsf{s}_1,\mathsf{s}_2\rangle\subset\mathsf{Q},\] 
is pseudo-split and absolutely pseudo-simple of minimal type. 
If $H$ is a pseudo-split absolutely pseudo-simple group of minimal type with root system $BC_n$, but $H$ is not isomorphic to the derived subgroup of a group as in Definition \ref{Defn:G}, then $H$ has rank $2$ and there is a tuple $(K/k,V^{(2)},V',V'')$ such that $H\cong G_{K/k,V^{(2)},V',V''}$.
\end{prop}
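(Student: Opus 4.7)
The plan splits into two parts mirroring the statement.

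\emph{Part 1: the constructed $G$ is as claimed.} The strategy is to repeat the chain of arguments from Lemma \ref{lem:C_G(D)} through Lemma \ref{igmap} with $\underline{V''}$ in place of $\underline{K}$ for the short simple root group and with $\mathsf{s}_1, \mathsf{s}_2$ playing the role of the simple Weyl representatives. The substantive novelty is the verification that the analogue of Lemma \ref{scru} produces an abstract subgroup of $\mathsf{Q}(k)$: by (C1)--(C7) the Chevalley commutators of root elements drawn from the specified subspaces must still land in the prescribed subspaces. The non-trivial cases are (i) short$+$very short yielding short (requires $V''\cdot V^{(2)}\subseteq V''$), and (ii) short$+$long yielding short (requires $V''\cdot V'\subseteq V''$); both follow from $V^{(2)}, V'\subseteq K_0$ together with $V''$ being a $K_0$-subspace. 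The commutator of two short roots with long sum has integer coefficient $\pm 2$ and so vanishes in characteristic $2$, imposing no constraint on $V''\cdot V''$.

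With this in hand, the $(B,N)$-pair analogue of Lemma \ref{bnpairlem} also goes through; the only additional point is that the formula $x_{-b}(t)=x_b(t^{-1})s_bh_b(t)x_b(t^{-1})$ used to place negative root elements in $\BB s_b \BB$ requires $v^{-1}\in V''$ whenever $0\neq v\in V''$. This holds because $v^{-1}=v\cdot v^{-2}$ and $v^{-2}\in K^2\subseteq K_0$; the same trick handles $V'$ as a $K^2$-subspace. Pseudo-reductivity, minimal type, and absolute pseudo-simplicity then follow by the same arguments as in Proposition \ref{ispseudo}.

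\emph{Part 2: the classification of $H$.} The next step is to invoke \cite[Thm.~9.8.6]{CGP15}, which attaches to any pseudo-split absolutely pseudo-simple $k$-group of minimal type with root system $BC_n$ a tuple $(K/k, V^{(2)}, V', V'')$ of the required form. The task is to show that for every $n\neq 2$ the data forces $V''=K$, in which case $H$ is (the derived subgroup of) a group from Definition \ref{Defn:G}. The case $n=1$ has no short roots, and Definition \ref{Defn:G} imposes precisely $K_0=K$ to cover it. For $n\geq 3$ one exploits the existence of two short simple roots $a_i,a_j$ whose sum $a_i+a_j$ is again a short root: computing in a Levi copy of $\mathrm{Sp}_{2n}$ gives $[x_{a_i}(s), x_{a_j}(t)] = x_{a_i+a_j}(\pm st)$, whence $V''\cdot V''\subseteq V''$.

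The main obstacle is to deduce $V''=K$ from multiplicative closure. Pick $0\neq v\in V''$. Since $K^2\subseteq K_0$, we have $v^2\in K_0^{\times}$, so $v^{-2}\in K_0$ and hence $v^{-1}=v\cdot v^{-2}\in K_0\cdot V''=V''$. Thus $V''$ is closed under inversion and contains $v^2K_0=K_0$; being also closed under multiplication, it is a subfield of $K$ containing $K_0$. The classification-data condition $k\langle V''\rangle=K$ then forces $V''=K$, completing the reduction. Only the case $n=2$ with $V''\subsetneq K$ remains, and here $H\cong G_{K/k, V^{(2)}, V', V''}$ by Part 1 together with the uniqueness in \cite[Thm.~9.8.6]{CGP15}.
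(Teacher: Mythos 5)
Your overall strategy coincides with the paper's: rerun the $(B,N)$-pair machinery of Lemmas \ref{scru}--\ref{rootgroupswork} with the modified root-group data, and quote \cite[Thm.~9.8.6]{CGP15} for the classification statement. In particular your Part 2 is essentially the paper's argument; the extra derivation of $V''=K$ for $n\geq 3$ via $V''\cdot V''\subseteq V''$ is redundant once the theorem is cited, and taken on its own it presupposes exactly the structure you are citing (that the short root groups of an arbitrary $H$ sit inside an ambient $\QQ$ as $\underline{V''}$ for a $K_0$-subspace satisfying Chevalley commutator relations). Your commutator checks for the analogue of Lemma \ref{scru} and the inversion observation $v^{-1}=v\cdot v^{-2}\in K_0\cdot V''=V''$ are the right points and match the paper, modulo the fact that $V^{(2)},V'\subseteq K_0$ is not automatic from Definition \ref{Def:data} (it requires the normalisation $1\in V'$, or a careful tracking of the scalars $v_1,v_2$ that enter the conjugated root groups $G_{a_1+2a_2}$, $G_{2a_1+2a_2}$).

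The genuine gap is in Part 1: you conclude ``absolute pseudo-simplicity \dots by the same arguments as in Proposition \ref{ispseudo}'', but that proposition only gives that $G$ is pseudo-reductive of minimal type and that $\D(G)$ is absolutely pseudo-simple. The present statement asserts that $G_{K/k,V^{(2)},V',V''}$ \emph{itself} is absolutely pseudo-simple, and this is precisely where the rank-$2$ case differs from Definition \ref{Defn:G}, where $G$ is genuinely not perfect (its Cartan subgroup $\R_{K/k}(D_K)$ is too large). So you must prove $G=\D(G)$, and this is not automatic: the paper devotes a substantial computation to it, first showing $h_{a_1}(c)\in\GG$ for $c\in k\langle V''\rangle^\times=K^\times$ and $h_{2a_2}(c)\in\GG$ for $c\in K_0^\times$, then using identities such as $[h_{2a_2}(c/v_2),x_{a_1}(t)]=x_{a_1}((v_2/c+1)t)$ together with $V''$ being a $K_0$-space, $V'$ a $K^2$-space and $V$ a $K$-space to place every simple root group, and finally $\mathsf{s}_1,\mathsf{s}_2$, inside $\D(\GG)$. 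Relatedly, your sketch never specifies the Cartan $\CC$ of the new $(B,N)$-pair (it is no longer $\R_{K/k}(D_K)(k)$ but the group generated by $h_{a_1}(c/v_1)$, $c\in V''\setminus\{0\}$, and $h_{2a_2}(c/v_2)$, $c\in V_0\setminus\{0\}$), and the Levi-subgroup/field-of-definition step of Lemma \ref{igmap} needs the additional conjugating element $\tilde h(\mu)=h_{2a_1+2a_2}(\mu)$ to arrange $1\in V''$, since the element $h(\lambda)$ of Lemma \ref{lem:1inv'} centralises the short root groups and cannot rescale $V''$. Without these steps neither the pseudo-simplicity of $G$ nor the data identification underlying your appeal to uniqueness in \cite[Thm.~9.8.6]{CGP15} is established.
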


\begin{proof} Our construction adapts reasonably easily to this case. 

A calculation in $G_{a_1}{}^{\mathsf{s}_1}G_{a_1}G_{a_1}$ shows $\mathsf{s}_1$ is generated by connected subgroups. Similarly for $\mathsf{s}_2$. Thus $G$ is connected and smooth.

Set $G_{a_1+2a_2}:={}^{\mathsf{s}_2}(G_{a_1})$, $G_{a_1+a_2}:={}^{\mathsf{s}_1}(G_{a_2})$ and $G_{2a_1+2a_2}:={}^{\mathsf{s}_1}(G_{2a_2})$. Then define $\UU$ to be the image of the $k$-points of the  multiplication map  \[G_{a_1}\times G_{a_2}\times G_{2a_2}\times G_{a_1+2a_2}\times  G_{2a_1+a_2}\times G_{2a_1+2a_2}\to G.\] To see $\UU$ is a subgroup, we should recheck the commutator calculations of Lemma~\ref{scru}. 
Note that, as before, all commutators between roots can be calculated inside a $B_2$ or a $C_2$ subgroup; in particular, then,
we see that the only two short positive root groups commute, as they are the two positive longer roots in the $B_2$ subsystem
(or, alternatively, use the fact that shorter roots at right angles for $C_2$ in characteristic $2$ commute; it amounts to the same thing).  
Similar considerations allow us to move other roots past each other in a product with less difficulty than in the general case.
Note finally that the requirement that $V''$ be a $K_0$-vector space legitimises the use of the calculation $[x_{2a_2}(t),x_{a_1}(u)]=x_{2a_2+a_1}(tu)\cdot x_{2a_2+2a_1}(tu^2)$ to reorder the $x_\bullet(\bullet)$ in a product of two elements of $\UU$.

With (C6) in mind, we let $\CC$ be the subgroup of $\R_{K/k}(D)(k)$ generated by the elements $h_{a_1}(c/v_1)=s_{a_1}(c)\mathsf{s}_1$, for all $c\in V''\setminus \{0\}$, and $h_{2a_2}(c/v_2)=s_{a_2}(c)\mathsf{s}_2$ for all $c\in V_0\setminus\{0\}$. Then using (C5), we check that $\BB:=\langle \CC,\UU\rangle$ is a semidirect product of $\CC$ and $\UU$. Let also $\NN=\langle \CC,\mathsf{s}_1,\mathsf{s}_2\rangle$. Then the proof that $(\BB,\NN)$ is a $(B,N)$-pair goes through as before, and the proof of Lemma \ref{rootgroupswork} similarly shows $G(k)=\GG$. We claim $\GG$ (hence also $G$) is perfect.

For $a$ a root of $L$, the formula $h_a(c)=s_a(c)s_a(1)$ implies $h_a(c)\in \GG$ for $a=a_1$ or $2a_2$ with $c\in V''$ or $c\in V'\oplus V^{(2)}$ respectively. (For the latter, we use the fact that $\pi$ is an isomorphism on $\CC$.) As $h_a$ is multiplicative we have $h_a(c)\in \GG$ for $c\in k\langle V''\rangle=K^\times$ or $c\in k\langle V'\oplus V^{(2)}\rangle=K_0^\times$, respectively. 

We have $c/v_2\in K_0$ for any fixed $c\in V_0$. As also $V''$ is a $K_0$-subspace, the calculation \[[h_{2a_2}(c/v_2),x_{a_1}(t)]=x_{a_1}((v_2/c+1)t)\] implies we can choose $t$ to reproduce any element $x_a(u)$ with $u\in V''$ as a commutator, so $G_{a_1}(k)\subseteq\D(\GG)$. Similarly, $[h_{a_1}(c/v_1),x_{2a_2}(t)]=x_{2a_2}((c^2/v_1^2+1)t)$, and $V'$ is a $K^2$-subspace of $K$, so $G_{2a_2}(k)\subset \D(\GG)$. 
For the last simple root group, observe $[h_{a_1}(c/v_1),x_{a_2}(t)]=x_{a_2}((c/v_1+1)t)$, and $V$ is a $K$-subspace of $E$, so $G_{a_2}(a)\subset \D(\GG)$. Finally, \begin{align*}[\mathsf{s}_1,x_{a_1}(t)]&=x_{-a_1}\left((v_1^{-2})t\right)x_{a_1}(t),\quad\text{and}\\
[\mathsf{s}_2,x_{2a_2}(t)]&=x_{-2a_2}\left((v_2^{-2})t\right)x_{2a_2}(t).\end{align*}
Substituting $t=v_1$ and $v_2$, respectively, we see $x_{-a_1}(v_1^{-1}),x_{-a_2}(v_2^{-1})\in \D(\GG)$. Thus $\mathsf{s}_1$ and $\mathsf{s}_2$ are in $\D(\GG)$. This proves the claim.

To see $\D(G)$ has a Levi subgroup, we may argue as in Lemma \ref{1inV'}. Let $\tilde h(\mu):=h_{2a_1+2a_2}(\mu)\in D(E)$ for $\mu\in E^\times$ and let it act on $\QQ$ by conjugation. Since the root group $\mathsf{Q}_{2a_1+2a_2}$ commutes with $\mathsf{Q}_{2a_2}$ and $\mathsf{Q}_{a_2}$, $\tilde h(\mu)$ centralises $\mathsf{Q}_{2a_2}$, $\mathsf{Q}_{a_2}$ and the element $\mathsf{s}_2$. On the other hand $(-a_1-2a_2)+(2a_1+2a_2)=a_1$ as a sum of roots and so $\tilde h(\mu)$ scales the $\QQ_{a_1}$ root group by $\mu$ and takes $\mathsf{s}_1$ to $\tilde h_{a_2}(\mu)\mathsf{s}_1$. If $0\neq c\in V''$ then setting $\mu=c^{-1}$, we have $h_{a_2}(\mu)\in {}^{\tilde h(\mu)}\GG$, so $s_2\in {}^{\tilde h(\mu)}\GG$. Furthermore the element $h(\lambda)$ of Lemma \ref{1inV'} lies in $D(E)$ and so commutes with $\tilde h(\mu)$. Replacing $G$ by its conjugate by $h(\lambda)\tilde h(\mu)$ we may assume that the tuple $(K/k,V^{(2)},V',V'')$ has $1\in V'$ and $1\in V''$. In particular $\GG$ contains the elements $x_{a_1}(t),x_{2a_2}(t)$ for $t\in k$,  $\mathsf{s}_1$ and $\mathsf{s}_2$, which together generate a copy of $\Sp_4$. This is sent isomorphically to itself under $\pi$. As per the proof of Lemma \ref{igmap}, this must be a Levi subgroup, and the field of definition of its unipotent radical is $K$.

To verify that $G$ is isomorphic to the group constructed in \cite[\S9.8.3]{CGP15}, just observe that the data $(K/k,V^{(2)},V',V'')$ can be recovered from the field of definition of $\RR_u(G_{\bar k})$ and the root groups of $G$. Then the classification of \cite[Thm.~9.8.6]{CGP15} yields the second sentence.
\end{proof}

\begin{remarks}(i). We have shown that each pseudo-split absolutely pseudo-simple group of minimal type with non-reduced root system contains a split simple subgroup of type $C_n$, specifically a (Levi) subgroup isomorphic to $\Sp_{2n}$. Gopal Prasad has asked us if it also contains a split simple subgroup of type $B_n$. Indeed, if $G$ is constructed from the data $(K/k,V^{(2)},V')$ or $(K/k,V^{(2)},V',V'')$, then suppose $0\neq v\in V\subseteq E$. Then conjugation by $h(v^{-1})$ sends $G\subseteq \QQ$ to a group constructed from analogous data with $1\in V$. Similarly, if $0\neq w\in V''$, then conjugation by the commuting element $\tilde h(w^{-1})$ permits us to assume $1\in V''$. It is easy to see that this modification is enough to guarantee that $G$ contains the canonical copy $M=\langle x_{a_1}(t),\dots,x_{a_n}(t),s_1,\dots, s_n,D\mid t\in k\rangle \cong\SO_{2n+1}\subseteq \mathsf{M}$ .

(ii). In each case, the quotient $\pi(\D(G))$ of $G$ by $\mathsf{U}\cap G$ is also absolutely pseudo-simple of minimal type and it has root system $C_n$: conjugating by $h(\lambda)$ or $h(\lambda)\tilde h(\lambda)$ we may assume it contains the canonical $k$-Levi subgroup $L$ of $\R_{K/k}(L_K)$. Thus it is pseudo-reductive and its root groups are known. One can in fact identify it with one of the groups of type $C$ described in \cite[Thm.~3.4.1(iii)]{CP17}. Essentially, one gets $\pi(\D(G))$ by replacing the long root groups of $\Sp_{2n}(K)$ with the subspace $V^{(2)}\oplus V'$. \end{remarks}

\section{Irreducible representations of \texorpdfstring{$\D(G_{K/k,V^{(2)},V'})$ and $G_{K/k,V^{(2)},V',V''}$}{DG and G}}\label{irreps}

We start with some generalities. Temporarily, let $k$ be an arbitrary field of characteristic $p$ and $G$ a pseudo-split pseudo-reductive $k$-group. Then $G$ has a Levi subgroup $M$ containing a split maximal torus $T$ \cite[Thm~3.4.6]{CGP15}. A choice of a system of positive roots of $M$ determines a subset $X(T)_+\subseteq X(T)$ of dominant weights of $T$. The following is \cite[Thm.~1.2]{BS22}:

\begin{theorem}\label{bs21thm}The isomorphism classes of irreducible representations of $G$ are in $1$--$1$ correspondence with the dominant weights of $T$. If $\lambda\in X(T)_+$ we denote by $L_G(\lambda)$ a corresponding irreducible representation.

On restriction, $L_G(\lambda)$ is $M$-isotypic and semisimple. Furthermore,
\[\dim L_G(\lambda)=\dim L_M(\lambda)\cdot \dim L_C(\lambda).\]\end{theorem}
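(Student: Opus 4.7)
The plan is to adapt the standard highest-weight theory from the reductive setting to the pseudo-reductive one, with the Cartan subgroup $C = Z_G(T)$ taking on the role that $T$ plays for a reductive group. First, fix a minimal pseudo-parabolic $B = CU$ of $G$ containing $T$, with unipotent radical $U = \prod_{a > 0} G_a$. For any finite-dimensional irreducible $G$-module $V$, the smooth connected unipotent group $U$ has a fixed point on $\mathbb{P}(V)$, yielding a non-zero subspace $V^U \subseteq V$; this is $B$-stable with $U$ acting trivially, so it is naturally a $C$-module.

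The first step is to prove $V^U = V_\lambda$ (the highest-weight space) and that it is $C$-irreducible. If $v \in V^U$ has $T$-weight $\mu$, the big cell decomposition $G = U^- B$ gives $G \cdot v = U^- \cdot C \cdot v \subseteq U^- \cdot V_\mu$, all of whose $T$-weights are $\leq \mu$; since $V$ is irreducible, we must have $\mu = \lambda$. Conversely, $V_\lambda \subseteq V^U$ since positive root groups only raise weights and $V_{\lambda + a} = 0$. Applying the same cyclicity argument to any non-zero $C$-submodule $W \subseteq V^U = V_\lambda$ shows $(G \cdot W)_\lambda = W$, forcing $W = V_\lambda$; hence $V^U \cong L_C(\lambda)$ as $C$-modules. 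For the converse direction, given dominant $\lambda$, one constructs $L_G(\lambda)$ as the socle of $\mathrm{ind}_B^G(L_C(\lambda))$, where $L_C(\lambda)$ is extended to $B$ by letting $U$ act trivially. This gives the asserted bijection $\lambda \leftrightarrow L_G(\lambda)$.

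For the dimension formula and the $M$-isotypic, semisimple statement, the strategy is to establish $V^U = V^{U_M}$, where $U_M := U \cap M$ is the unipotent radical of a Borel of $M$. Given this equality, every $M$-highest-weight vector of $V$ has $T$-weight $\lambda$, so every composition factor of $V|_M$ is $L_M(\lambda)$; the multiplicity then equals $\dim V^{U_M} = \dim L_C(\lambda)$, yielding $\dim L_G(\lambda) = \dim L_M(\lambda) \cdot \dim L_C(\lambda)$. Semisimplicity requires the additional input that no non-trivial self-extensions of $L_M(\lambda)$ occur inside $V|_M$; this should follow from $G$-irreducibility combined with a careful accounting of the $M$- and $C$-actions on the big cell of $G$.

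The main obstacle is the equality $V^U = V^{U_M}$: a weight vector $v \in V^{U_M}_\mu$ with $\mu < \lambda$ is fixed by each Levi root group $M_a \subseteq G_a$, but not \emph{a priori} by the larger $G_a$. One must show the ``extra'' elements of $G_a \setminus M_a$ act trivially on such a $v$, which we expect to follow from the conjugation action of $C$ on root groups (axiom (C5)) combined with the $G$-cyclicity of $v$. Alternatively, in the minimal-type case, one can pass through the embedding \eqref{importantmap} to the Weil restriction $\R_{K/k}(G^{\mathrm{red}}_K)$, whose irreducible modules restrict to the canonical Levi in a manifestly semisimple, $L_M(\lambda)$-isotypic way, and deduce the general statement from there.
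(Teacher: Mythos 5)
Your highest-weight framework (the first two paragraphs: $V^U=V_\lambda$, its $C$-irreducibility via the big cell, uniqueness, and realising $L_G(\lambda)$ inside $\mathrm{ind}_B^G(L_C(\lambda))$) is sound in outline, and note for context that the paper itself does not prove this statement at all --- it is imported verbatim from \cite{BS22}, so there is no in-paper argument to match. The problem is that everything which actually constitutes the content of the theorem --- the $M$-isotypicity, semisimplicity, and the dimension formula --- is reduced by you to the equality $V^U=V^{U_M}$, and for that you give no argument. Saying that the elements of $G_a\setminus M_a$ should kill a $U_M$-fixed vector of weight $\mu<\lambda$ ``by (C5) and $G$-cyclicity'' does not engage the difficulty: (C5) only says that conjugation by Cartan elements rescales the parameter of a root group, which can never upgrade ``fixed by $M_a$ (parameters in $k$)'' to ``fixed by $G_a$ (parameters in $K$, $V$ or $V'$)''; and the weight argument only gives $x_a(t)v-v\in\bigoplus_{\nu>\mu}V_\nu$, which is non-zero territory precisely when $\mu<\lambda$. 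Indeed $V^U=V^{U_M}$ is an immediate \emph{consequence} of the theorem (both sides are then $V_\lambda$), so what you have identified as ``the main obstacle'' is the theorem itself, left unproved. (One small simplification: once isotypicity is known, semisimplicity is automatic from $\Ext^1_M(L_M(\lambda),L_M(\lambda))=0$ for the reductive group $M$; no ``careful accounting on the big cell'' is needed there.)

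The proposed fallback is also not viable. Passing through $i_G$ as in \eqref{importantmap} fails exactly in the situation this paper cares about: for the pseudo-split absolutely pseudo-simple groups of minimal type with non-reduced root system, $\ker i_G$ is non-trivial (only its intersection with a Cartan vanishes), so irreducible $G$-modules need not factor through $\R_{K/k}(G^{\mathrm{red}}_K)$ --- for instance any $L_G(\lambda)$ on which the very short root groups act non-trivially cannot be obtained by pullback. Moreover the theorem is applied in the paper to arbitrary pseudo-split pseudo-reductive groups (e.g.\ to $C$ itself and to products of commutative rank-one groups in Theorem \ref{prodsofrk1}), and the claim that for a Weil restriction the restriction to the canonical Levi is ``manifestly'' isotypic and semisimple is again essentially the statement to be proved, not an input. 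Finally, even the existence half of the bijection needs more care than you allow: in the pseudo-reductive setting $G/B$ is not proper, so the non-vanishing of $\mathrm{ind}_B^G(L_C(\lambda))$ for dominant $\lambda$ and the simplicity of its socle require justification. As it stands your text is a plausible outline of the highest-weight formalism with the substantive assertions of Theorem \ref{bs21thm} asserted rather than proved.
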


A general formula for $\dim L_M(\lambda)$ is an exceedingly difficult problem except for low rank, even if we were to assume $p=2$ and $M$ simple of type $C$. However, for the pseudo-simple groups $G$ under consideration in this paper, we give a reduction to this problem by describing $\dim L_C(\lambda)$.

Given a split torus $T$ and a choice of isomorphism $T\cong \Gm^n$, a dominant weight $\lambda\in X(T)_+$ identifies with a homomorphism $(\Gm)^n\to \Gm$; via $(x_1,\dots,x_n)\mapsto \prod x_i^{\lambda_i}$ for certain non-negative integers $\lambda_i$, and we so identify $\lambda$ with the sequence $(\lambda_1,\dots,\lambda_n)$.
 If $(k_1,\dots, k_n)$ is a sequence of finite purely inseparable field extensions of $k$, then denote by $k_i(\lambda_i)$ the subfield $k(k_i^{\lambda_i})=k(k_i^{p^{e_i}})$ of $k_i$, where $e_i=v_p(\lambda_i)$ is the exponent of the highest power of $p$ dividing $\lambda_i$. 
 If $C\cong \prod_{i=1}^n\R_{k_i/k}(\Gm)$ with $T$ identified as a subgroup in the obvious way, then \cite[Thm.~5.8]{BS22} gives the dimension of $L_C(\lambda)$ as $[\kappa:k]$ where $\kappa$ is the compositum of the $k_i(\lambda_i)$ in $\bar k$. We need a slight upgrade of this result.

\begin{theorem}Let $k$ be any field and $C$ a $k$-group with $C\cong C_1\times \dots \times C_n$ where each $C_i$ is a commutative pseudo-split pseudo-reductive $k$-group of rank $1$. Let $T\cong T_1\times \dots\times T_n$ be the maximal split torus of $C$, with each $T_i$ the maximal split torus of $C_i$, and suppose the minimal field of definition of the geometric unipotent radical of $C_i$ is $k_i$. Then for $\lambda\in X(T)$, $L_C(\lambda)$ identifies with the compositum $K$ of the subfields $k_i(\lambda_i)$ of $\bar k$. In particular, $\dim L_C(\lambda)=[K:k]$.\label{prodsofrk1}\end{theorem}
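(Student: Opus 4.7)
The plan is to realise $L_C(\lambda)$ concretely on the compositum $K$ and to verify irreducibility directly; uniqueness of irreducibles of a given highest weight (Theorem~\ref{bs21thm}) then identifies the constructed module with $L_C(\lambda)$. The argument generalises that of \cite[Thm.~5.8]{BS22}, which handles the case $C_i=\R_{k_i/k}(\Gm)$.

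For the construction, for each $i$ the canonical map $i_{C_i}\colon C_i\to\R_{k_i/k}((C_i)^{\mathrm{red}}_{k_i})$ lands in $\R_{k_i/k}(\Gm)$, since $(C_i)^{\mathrm{red}}_{k_i}$ is a pseudo-split rank-$1$ reductive group. Writing $\lambda_i=p^{e_i}m_i$ with $(m_i,p)=1$, the $\lambda_i$-power endomorphism of $\R_{k_i/k}(\Gm)$ factors through $\R_{k_i(\lambda_i)/k}(\Gm)\hookrightarrow\R_{K/k}(\Gm)$, yielding a $k$-group homomorphism $\chi_i\colon C_i\to\R_{K/k}(\Gm)$. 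Multiplying the $\chi_i$ in the commutative target defines $\chi\colon C\to\R_{K/k}(\Gm)$, and pulling back the multiplication action of $\R_{K/k}(\Gm)$ on $K$ equips $K$ with a $C$-module structure on which $T=T_1\times\dots\times T_n$ acts via the character $\lambda$.

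It remains to show that $K$ is irreducible as a $C$-module. Given a nonzero $C$-subrepresentation $W\subseteq K$, the aim is to show that $W$ is closed under multiplication by each $k_i(\lambda_i)\subseteq K$; since the $k_i(\lambda_i)$ generate $K$ as a ring, this forces $W=K$. When $C_i=\R_{k_i/k}(\Gm)$ this is the content of \cite[Thm.~5.8]{BS22}: the image of the $\lambda_i$-power map is scheme-theoretically $\R_{k_i(\lambda_i)/k}(\Gm)$ by definition of $k_i(\lambda_i)$ (together with $(\cdot)^{m_i}$ being an automorphism of $\R_{k_i/k}(\Gm)$), and the multiplication action of $\R_{k_i(\lambda_i)/k}(\Gm)$ on $k_i(\lambda_i)$ is manifestly irreducible. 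For a general commutative pseudo-reductive $C_i$, one exploits the minimality of $k_i$ as the field of definition of $\RR_u((C_i)_{\bar k})$: this minimality guarantees that the scheme-theoretic image of $i_{C_i}$ in $\R_{k_i/k}(\Gm)$ is not contained in $\R_{F/k}(\Gm)$ for any proper subfield $k\subseteq F\subsetneq k_i$, and propagating this through the $\lambda_i$-power map yields an analogous non-containment inside $\R_{k_i(\lambda_i)/k}(\Gm)$, which then suffices for the $k_i(\lambda_i)$-invariance of $W$.

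The hardest step will be the last implication: that the image of $\chi_i$ being ``large enough'' in the above sense implies $k_i(\lambda_i)$-linearity of $C_i$-invariant subspaces of $K$. A direct differential argument is delicate because the $\lambda_i$-power map has zero Lie-algebra differential whenever $p\mid\lambda_i$, so a naive Lie-algebra calculation does not suffice. A cleaner route is likely via the structure theory of commutative pseudo-reductive rank-$1$ groups in \cite[Section~9]{CGP15}, which classifies the closed $k$-subgroups of $\R_{k_i/k}(\Gm)$ and, in combination with the explicit form of the $\lambda_i$-power map, allows one to conclude the required density statement directly.
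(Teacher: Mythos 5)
Your overall strategy---realising $L_C(\lambda)$ on the compositum $K$ via the maps $i_{C_i}$ composed with the $\lambda_i$-power maps into $\R_{k_i(\lambda_i)/k}(\Gm)$---is the same as the paper's, but there is a genuine gap at exactly the step you yourself flag as hardest. The entire content of the theorem beyond the split case $C_i=\R_{k_i/k}(\Gm)$ of \cite[Thm.~5.8]{BS22} is the claim that invariance of a nonzero subspace $W\subseteq K$ under the image of $C_i$ forces closure under multiplication by all of $k_i(\lambda_i)$; equivalently, that the $k$-span of the $\lambda_i$-th powers of the points in the image of $i_{C_i}$ is the whole field $k_i(\lambda_i)$ and not a proper subfield. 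Your proposed reduction---``the scheme-theoretic image of $i_{C_i}$ is not contained in $\R_{F/k}(\Gm)$ for any proper $F\subsetneq k_i$, and this propagates through the power map''---does not deliver this: non-containment in smaller Weil restrictions is a statement about the image as a $k$-group scheme, whereas what is needed is a spanning statement about its points after the $\lambda_i$-power map, and passing from one to the other requires a further argument (density or a functorial reformulation of invariance, plus control of the $p$-power part of $\lambda_i$) that you do not supply. This missing statement is precisely what the paper quotes from \cite[Prop.~5.11]{BS22}: the element $1\in L_Y(\lambda)$ generates the subfield $k_i(\lambda_i)$ already under the action of $C_i$ (this is where minimality of $k_i$ enters). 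With that in hand the paper concludes quickly by pulling back $L_Y(\lambda)$ for $Y=\prod_i\R_{k_i/k}(\Gm)$, noting it is a $\lambda$-weight module hence $L_C(\lambda)$-isotypic, and normalising an irreducible submodule to contain $1$ by multiplying by $w^{-1}$ (field multiplication commutes with the $Y$-action). As a minor point, your parenthetical that the $m_i$-power map is an automorphism of $\R_{k_i/k}(\Gm)$ is incorrect (it is an isogeny with kernel the Weil restriction of $\mu_{m_i}$), though this does not affect which subfield is generated.

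More seriously, the fallback you propose for the hard step is not available: \cite[\S 9]{CGP15} does not classify the closed $k$-subgroups of $\R_{k_i/k}(\Gm)$; it only constructs particular families (such as the subgroups $(V)^*_{K/k}$ used elsewhere in this paper), and commutative pseudo-reductive groups---equivalently Cartan subgroups---are exactly the part of the theory that resists classification. So as written your argument reduces the theorem to an unproved claim with no viable route indicated; to repair it you should either prove the spanning statement for $C_i$ directly from minimality of $k_i$ or cite \cite[Prop.~5.11]{BS22}, which is what the paper does.
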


\begin{proof}As usual let $i_{C_i}:C_i\to \R_{k_i/k}(\Gm)$ denote the map corresponding under adjunction to the quotient of $(C_i)_{k_i}$ by its unipotent radical. As each $T_i$ is a Levi subgroup for $C_i$, the map $\prod i_{C_i}:C\to Y:=\prod\R_{k_i/k}(\Gm)$ is an inclusion on restriction to $T$. Consider $L_Y(\lambda)$ as a $C$-module via the map $\prod i_{C_i}$. Now, $L_Y(\lambda)$ is a $\lambda$-weight module for $T$---see \cite[\S5.1]{BS22}---hence as a $C$-module it is isotypic with composition factors isomorphic to $L_C(\lambda)$. Also, $L_Y(\lambda)$ identifies with the compositum of the $k_i(\lambda_i)$ in $\bar k$ by \cite[Thm.~5.8, proof]{BS22}; more specifically, $1\in L_Y(\lambda)$ generates the subfield $k_i(\lambda_i)$ of $K$ under the action of the subgroup $\R_{k_i/k}(\Gm)$. By \cite[Prop.~5.11]{BS22}, $1\in L_Y(\lambda)$ also generates the subfield $k_i(\lambda_i)$ under $C_i$. If $W\cong L_C(\lambda)$ is an irreducible $C$-stable submodule of $L_Y(\lambda)$ containing $w\neq 0$, then because multiplication in the field $L_Y(\lambda)$ commutes with the $Y$-action, multiplying by $w^{-1}$ gives another $C$-stable submodule of $L_Y(\lambda)$, so we may assume $W$ contains $1$. Therefore $W$ also contains the compositum of the $k_i(\lambda_i)$. But this is the whole of $L_C(\lambda)$.\end{proof}

Keep the assumptions as above but return to the case where $p=2$ and $G$ is pseudo-simple and of minimal type with root system $BC_n$ and $K$ the minimal field of definition of the geometric unipotent radical. We treat the following cases in parallel: (i) $G=G_{V^{(2)},V',n}$; (ii) $G=\D(G_{V^{(2)},V',n})$; or if $n=2$, (iii) $G=G_{V^{(2)},V',V''}$. In each case there is a Levi subgroup $M\cong \Sp_{2n}$ containing the split maximal torus $D$. Respectively, we have $C=C_G(D)$ isomorphic to:
\begin{enumerate}
\item $\prod_{i=1}^n \R_{K/k}(\Gm)$; 
\item $\prod_{i=1}^{n-1}\R_{K/k}(\Gm)\times (V_0^*)_{K_0/k}$---\cite[(9.7.6)]{CGP15}; or
\item $(V'')^*_{K/k}\times (V_0)^*_{K_0/k}$---\cite[(9.8.2), Prop.~9.8.4(1)]{CGP15}.\end{enumerate} 
Here $V_0=V^{(2)}\oplus V'$, $K_0=k\langle V_0\rangle$ is the extension of $k$ inside $K$ generated by ratios of nonzero elements from $V_0$, and $(V_0)^*_{K_0/k}$ is the Zariski closure in $\R_{K_0/k}(\Gm)$ of the subgroup generated by ratios of nonzero elements from $V_0$---see \cite[Defn.~9.1.1, $\S$9.7.8]{CGP15} for more discussion. 
We assume that $1\in V'$, which is no loss by Lemma \ref{lem:1inv'} or the proof of Proposition \ref{prop:othertype};
this also ensures that $V_0\subseteq K_0$ in case (iii).
We always take the rank $1$ factors of $C$ in order corresponding to simple roots $\{a_1,\dots,a_{n-1},a_n\}$ of $G$.

If $(k_1,\dots, k_n)$ is a sequence of finite purely inseparable field extensions of $k$, then denote by $k_i(\lambda_i)$ the subfield $k(k_i^{\lambda_i})=k(k_i^{v_p(\lambda_i)})$ of $k_i$, where $v_p(\lambda_i)$ is the highest power of $p=2$ dividing $\lambda_i$. Then \cite[Thm.~5.8]{BS22} gives the dimension of $L_C(\lambda)$ as $[\kappa:k]$ where $\kappa$ is the compositum of the $k_i(\lambda_i)$ in $\bar k$. That establishes the first case of the following:

\begin{theorem} If $\lambda=0$ then $L_G(\lambda)$ is trivial and $\dim L_G(\lambda)=1$. Otherwise:

in case (i), $\dim L_C(\lambda)=\max\{[K(\lambda_i):k]\mid 1\leq i \leq n\}$;

in case (ii), $\dim L_C(\lambda)=\max(\{[K(\lambda_i):k]\mid 1\leq i\leq n-1\}\cup [K_0(\lambda_n):k])$;

in case (iii), $\dim L_C(\lambda)=\max([K(\lambda_1):k],[K_0(\lambda_2):k])$.\end{theorem}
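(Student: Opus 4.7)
The plan is to invoke Theorem \ref{prodsofrk1} in each of the three cases, reducing $\dim L_C(\lambda)$ to $[\kappa:k]$, where $\kappa$ is the compositum in $\bar k$ of the subfields $k_i(\lambda_i)$ (with $k_i$ the minimal field of definition of the geometric unipotent radical of the $i$-th rank $1$ factor of $C$, as specified in (i)--(iii)). The bulk of the proof then consists of showing that $\kappa$ equals the largest of these subfields; equivalently, that the collection is totally ordered by inclusion.

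Case (i) is already settled in the paragraph preceding the statement: every $k_i$ equals $K$, so the subfields $K(\lambda_i) = k(K^{2^{e_i}})$ (with $e_i := v_2(\lambda_i)$) form a chain in $K$ since smaller $e_i$ yields larger $K(\lambda_i)$; hence $\kappa = K(\lambda_{i^*})$, where $i^*$ minimises $e_i$.

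For cases (ii) and (iii), the crucial extra ingredient is the chain of inclusions $K^2 \subseteq K_0 \subseteq K$. The second containment holds by construction; for the first, $V'$ is a non-zero $kK^2$-subspace of $K$ contained in $V_0$, so ratios of non-zero elements of $V'$ generate the subfield $kK^2 \supseteq K^2$ inside $K_0$. Set $e^* := \min_{i<n} e_i$ in case (ii) and $e^* := e_1$ in case (iii); this yields $K(\lambda_{i^*})$ as the largest among the $K$-factors by case (i). It remains to compare $K(\lambda_{i^*}) = k(K^{2^{e^*}})$ with $K_0(\lambda_n) = k(K_0^{2^{e_n}})$. If $e^* \leq e_n$, then $K_0^{2^{e_n}} \subseteq K^{2^{e_n}} \subseteq K^{2^{e^*}}$, so $K_0(\lambda_n) \subseteq K(\lambda_{i^*})$. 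If $e^* > e_n$, then using $K^2 \subseteq K_0$ we get $K^{2^{e^*}} = (K^2)^{2^{e^*-1}} \subseteq K_0^{2^{e^*-1}} \subseteq K_0^{2^{e_n}}$, so $K(\lambda_{i^*}) \subseteq K_0(\lambda_n)$. Either way, $\kappa$ is the larger of the two subfields, giving the claimed maximum; case (iii) is the $n=2$ specialisation of the same dichotomy.

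No step is a genuine obstacle: Theorem \ref{prodsofrk1} supplies all the substantive content, and the remaining comparison of purely inseparable subfield chains is elementary once the sandwich $K^2 \subseteq K_0 \subseteq K$ is in hand. The $\lambda = 0$ claim is automatic, since each $k_i(0) = k$ forces $\kappa = k$ and $\dim L_C(0) = 1$; as $\dim L_M(0) = 1$ as well, Theorem \ref{bs21thm} gives $\dim L_G(0) = 1$.
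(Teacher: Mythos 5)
Your reduction via Theorem \ref{prodsofrk1}, the treatment of $\lambda=0$, and the comparison of the purely inseparable subfields are all sound: the dichotomy on $e^*$ versus $e_n$ using the sandwich $kK^2\subseteq K_0\subseteq K$ is a correct (and slightly more explicit) version of the paper's observation that one of the fields $K(\lambda_i)$, $K_0(\lambda_n)$ contains all the others, so the compositum is the largest of them.

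There is, however, a genuine gap at the point where you feed data into Theorem \ref{prodsofrk1}: you take the fields $k_i$ to be ``as specified in (i)--(iii)'', but the case descriptions only specify the isomorphism type of $C$ as a product of rank-$1$ groups; they do not identify the minimal field of definition of the geometric unipotent radical of each factor. For the factors $\R_{K/k}(\Gm)$ this field is indeed $K$, but for $(V_0)^*_{K_0/k}$ (cases (ii) and (iii)) and $(V'')^*_{K/k}$ (case (iii)) it must be \emph{proved} that the fields are $K_0$ and $K$ respectively: a priori they are only \emph{contained} in $K_0$, resp.\ $K$, since these groups are defined as Zariski closures inside $\R_{K_0/k}(\Gm)$, resp.\ $\R_{K/k}(\Gm)$. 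Without this identification, Theorem \ref{prodsofrk1} expresses $\dim L_C(\lambda)$ in terms of unknown subfields, and the stated formulas involving $K_0(\lambda_n)$ and $K(\lambda_1)$ do not follow. The paper's proof devotes its first part to exactly this verification: the containment in $\R_{K_0/k}(\Gm)$ bounds the field above by $K_0$, while the fact that the $k$-points of $(V_0)^*_{K_0/k}$ generate $K_0$ as a $k$-algebra (because $K_0=k\langle V_0\rangle$) forces irreducible action on $K_0$ and hence bounds the field below by $K_0$; the same argument with $k\langle V''\rangle=K$ handles $(V'')^*_{K/k}$. Once you add this step, the rest of your argument goes through and coincides with the paper's conclusion.
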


\begin{proof} 
We first observe that the rank $1$ subgroups $(V_0)^*_{K_0/k}$ and $(V'')^*_{K/k}$ have minimal fields of definition of their  unipotent radicals equal to $K_0\subseteq K$ and $K$ respectively.
To see this for $(V_0)^*_{K_0/k}$ note that,  by definition, $(V_0)^*_{K_0/k}\subseteq \R_{K_0/k}(\Gm)$, so the field of definition of its unipotent radical is contained in $K_0$. 
On the other hand, $(V_0)^*_{K_0/k}(k)$ contains generators for $K_0$ as a $k$-algebra, and hence acts irreducibly on $K_0$ through the inclusion $(V_0)^*_{K_0/k}\subseteq \R_{K_0/k}(\Gm)$; this forces the field of definition of its unipotent radical to be at least $K_0$ because that is the field of definition of the unipotent radical of $\R_{K_0/k}(\Gm)$. 
A similar argument works for $(V'')^*_{K/k}$.
Therefore, by Theorem \ref{prodsofrk1}, $L_C(\lambda)$ identifies with the compositum of the $K(\lambda_i)$ for $i\leq n-1$ with either $K(\lambda_n)$ in case (i), or $K_0(\lambda_n)$ in cases (ii) and (iii). Now $K_0$ is sandwiched between $K$ and $kK^2$; thus either some $K(\lambda_i)$ or $K_0(\lambda_n)$ contains all others. The theorem follows.\end{proof}

\begin{remark}In case $n=1$ or $2$ we can be completely explicit about $\dim L_G(\lambda)$ since $\dim L_M(\lambda)$ is so easy to describe.

If $n=1$, then $K=K_0$ and we identify dominant weights with non-negative integers. By Steinberg's Tensor Product Theorem \cite[II.3.17]{Jan03}, a simple module with high weight $\lambda$ is a tensor product of Frobenius twists. If $\lambda=r_0+2r_1+\dots$ with $r_i\in \{0,1\}$ is its $2$-adic expansion, then \begin{equation}\label{stpt}L_M(\lambda)\cong L_M(r_0)\otimes L_M(r_1)^{[1]}\otimes L_M(r_2)^{[2]}\otimes \dots.\end{equation} Each $L(r_i)$ is either trivial or the $2$-dimensional natural module according as $r_i=0$ or $1$. Therefore we obtain that either $\lambda=0$ and $L_G(\lambda)$ is the $1$-dimensional trivial module, or \[\dim L_G(\lambda)=2^{\sum r_i} \cdot [(K)^{2^{j}}:k],\] where $j$ is the minimum integer such that $r_j$ is non-zero.

In case $n=2$, we have $M\cong\Sp_4$ and we can appeal to the exceptional isogeny $\tau:M\to M$ which is a square root of the Frobenius map. Given an $M$-module $V$, one gets another $V^{[\tau]}$ by acting through $\tau$. For $V=L_M(\lambda)$ irreducible, say of high weight $\lambda\in X(D)_+=X(\Gm\times\Gm)_+$, then if $\lambda$ is viewed as a pair of non-negative integers $(a,b)$, we have $V^{[\tau]}$ is another irreducible module, of high weight $\tau^*(\lambda)=(2b,a)$. It is now clear we may write $\lambda$ as a $\tau$-adic expansion $r_0\varpi+\tau^*(r_1\varpi)+(\tau^*)^2(r_2\varpi)+\dots$, where each $r_i\in\{0,1\}$ and $\varpi:=(1,0)$. A version of Steinberg's theorem \cite[\S11]{Ste63} states that \eqref{stpt} still holds, with $\varpi$ appropriately inserted. Since $\varpi$ is the high weight of the natural $4$-dimensional $M$-module, we have $\dim L_M(\lambda)=4^{\sum r_i}$.

Let $G:=G_{V^{(2)},V',V''}$, where we allow $V''=K$. A Cartan subgroup of $G$ is $C=(V'')^*_{K/k}\times (V_0)^*_{K_0/k}$. Note that $K^2\subseteq K_0\subseteq K$. 

Let $r = \sum r_i$.
If $r=0$ then $\lambda = 0$ and $L_G(\lambda)\cong k$ has dimension $1$. 
Otherwise let $j$ be minimal such that $r_j\neq 0$. Then \[\dim L_{G}(\lambda)=\begin{cases}[(K)^{2^{j/2}}:k]\cdot 4^r\text{\quad if $j$ is even;}\\
[(K_0)^{2^{\frac{j-1}{2}}}:k]\cdot 4^r\text{\quad if $j$ is odd.}\end{cases}\]\end{remark}

\appendix\section{An alternative construction}

Following a suggestion of one of the referees, in this short appendix we outline an alternative construction of the groups from Section \ref{defng}.
The idea is to employ \cite[Thm.~C.2.29]{CGP15}, which acts as a ``black box'' producing pseudo-reductive groups from root group data.

We first give a statement of the theorem. 
Let $X$ be a smooth connected affine group over an arbitrary field $k$, and let $S$ be a nontrivial $k$-split torus in $X$. Let $\Delta$ be a non-empty linearly independent subset of the character group $X(S)$ of $S$, and let $C$ be a smooth connected $k$-subgroup of $Z_X(S)$ containing $S$ as a maximal split $k$-torus. For each $a\in \Delta$ suppose there is given a smooth connected $k$-subgroup $F_a$ of $X$ containing $C$ such that $Z_{F_a}(S)=C$ and assume that the set $\Phi(F_a,S)$ of non-zero $S$-weights on $\Lie(F_a)$ contains $\pm a$ and is contained in $\mathbb{Z}\cdot a$. Since $Z_{F_a}(S)=C$, the maximal $k$-split torus $S$ of $C$ is also a maximal $k$-split torus of $F_a$ for all $a\in \Delta$. For $a\in \Delta$, let $E_{\pm a}$ be the root groups of $F_a$ corresponding to $\pm a$. 

\begin{theorem}\label{CGPC.2.29}Assume that for every $a,b\in \Delta$ with $a\neq b$, $E_a$ commutes with $E_{-b}$. For the smooth connected $k$-subgroup $F$ of $X$ generated by the $k$-subgroups $\{F_a\}_{a\in \Delta}$, the following hold:
	\begin{enumerate}
		\item The centraliser $Z_F(S)$ is equal to $C$. So $S$ is a maximal $k$-split torus of $F$ and, if $S$ is a maximal torus of $C$, then it is a maximal torus of $F$.
		\item Any nonzero weight of $S$ on $\Lie(F)$ is either a nonnegative or a nonpositive integral linear combination of elements in $\Delta$.
		\item For each $a\in \Delta$, the $\pm a$-root groups in $F$ are $E_{\pm a}$.
		\item If for every $a\in \Delta$, $F_a$ is quasi-reductive (resp.~pseudo-reductive) then $F$ is quasi-reductive (resp.~pseudo-reductive), and in such cases $\Delta$ is a basis of the root system of $F$ with respect to $S$.
		\item If the $k$-groups $F_a$ are reductive for every $a\in\Delta$ then so is $F$.
		\item If $F_a$ is quasi-reductive for all $a\in \Delta$ then $F$ is functorial with respect to isomorphisms in $(S,\Delta,\{F_a\}_{a\in \Delta})$ in the following sense: if $(X',S',C',\Delta',\{F'_{a'}\}_{a'\in\Delta})$ is a second $5$-tuple such that $E'_{a'}$ commutes with $E'_{-b'}$ for all $a'\neq b'\in \Delta$ and there are a given $k$-isomorphism $f_C:C\cong C'$ restricting to $f_S:S\to S'$ satisfying $X(f_S)(\Delta')=\Delta$ and $k$-isomorphisms $f_{a'}:F_{a'\circ f_S}\cong F'_{a'}$ extending $f_C$ for all $a'\in \Delta'$---so that $F_{a'}$ and hence $F$ is also quasi-reductive---then there is a unique $k$-isomorphism $f:F\cong F'$ extending $f_{a'}$ for all $a'\in \Delta'$.
	\end{enumerate}
\end{theorem}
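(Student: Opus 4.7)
The plan is to adapt the dynamic-method toolkit of \cite[\S2.1]{CGP15} to build a big cell for $F$ out of the rank-one big cells of the subgroups $F_a$. Since $\Delta$ is linearly independent in $X(S)$, one can choose a cocharacter $\lambda \in X_*(S)$ with $\langle a,\lambda\rangle > 0$ for every $a\in\Delta$. For each such $a$, the hypothesis $\Phi(F_a,S)\subseteq \mathbb{Z}\cdot a$ implies that $U_{F_a}(\pm\lambda)$ is a smooth connected unipotent subgroup whose nonzero $S$-weights on the Lie algebra are precisely the strictly positive (resp.~negative) multiples of $a$ appearing in $\Phi(F_a,S)$; in particular $E_{\pm a}\subseteq U_{F_a}(\pm\lambda)$, and \cite[Prop.~2.1.8]{CGP15} yields an open immersion $U_{F_a}(-\lambda)\times C\times U_{F_a}(\lambda)\hookrightarrow F_a$.

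The core step is to show that for the smooth connected unipotent subgroups $U^{\pm}:=\langle U_{F_a}(\pm\lambda) : a\in\Delta\rangle \subseteq U_X(\pm\lambda)$, the multiplication map
\[
\mu\colon U^-\times C\times U^+\longrightarrow F
\]
is an open immersion. Granting this, each clause of the theorem will follow. Clause~(1): $Z_F(S)$ meets the open cell only in $C$, because $\Lie(U^{\pm})$ carries no zero $S$-weights, and maximality of $S$ in $C$ then forces maximality in $F$ by a dimension count. Clause~(2): one has $\Lie(F)=\Lie(U^-)\oplus\Lie(C)\oplus\Lie(U^+)$, whose summands carry only $\mathbb{Z}_{\leq 0}$-, zero, and $\mathbb{Z}_{\geq 0}$-combinations of $\Delta$ respectively (the outer summands arising from iterated commutator brackets among the $\Lie(U_{F_a}(\pm\lambda))$). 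Clause~(3): the $a$-weight space of $\Lie(U^+)$ cannot receive contributions from brackets $[\Lie(U_{F_b}(\lambda)),\Lie(U_{F_c}(\lambda))]$ with $b,c\neq a$, by linear independence of $\Delta$, so this space must coincide with $\Lie(E_a)$ and hence the $a$-root group of $F$ is $E_a$; likewise for $-a$. Clauses~(4) and~(5) follow by using the open cell to show that a smooth connected normal unipotent subgroup of $F$ (or an arbitrary smooth connected unipotent subgroup in the reductive case) must intersect each root group trivially by the corresponding property of $F_a$, and so lands in $Z_F(S)=C$, which is a pseudo-reductive Cartan. Clause~(6) is obtained by using the given data to construct an isomorphism on the open cells and extending by density.

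The principal obstacle is verifying that $\mu$ is an open immersion, which breaks into injectivity on functor points and bijectivity of the tangent map at the identity. For injectivity, an identity $u^-cu^+=v^-dv^+$ must be reduced to comparisons within individual $F_a$'s by using $[E_a,E_{-b}]=1$ for $a\neq b$ in $\Delta$ to commute negative pieces past positive ones; the delicate bookkeeping is that commutators $[U_{F_a}(\lambda),U_{F_b}(\lambda)]$ within $U^+$ may contribute to weights $ia+jb$ with $i,j>0$, and the ordering of factors must be chosen consistently to absorb these. For the tangent map, directness of $\Lie(U^-)\oplus\Lie(C)\oplus\Lie(U^+) \subseteq \Lie(F)$ uses linear independence of $\Delta$ together with the absence of zero weights on $\Lie(U^{\pm})$, while equality relies on showing that every $S$-weight on $\Lie(F)$ is accounted for by the iterated brackets generating $\Lie(U^{\pm})$ and by $\Lie(C)$---a consequence of $F$ being generated by $C$ and the $E_{\pm a}$. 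Once these two technical points are settled, the structural conclusions (1)--(6) become routine.
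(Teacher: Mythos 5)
First, note that this statement is not proved in the paper at all: it is quoted verbatim from \cite[Thm.~C.2.29]{CGP15} and used explicitly as a ``black box'', so there is no in-paper proof to match your argument against; your attempt has to stand on its own against the (long and delicate) proof in \emph{loc.~cit.}, and as written it does not. The central problem is that your ``core step'' is exactly the hard content of the theorem and you defer it rather than prove it. Moreover you mislocate the difficulty: injectivity of $\mu\colon U^-\times C\times U^+\to F$ is essentially free, since $U^\pm\subseteq U_X(\pm\lambda)$ and $C\subseteq Z_X(\lambda)$, and \cite[Prop.~2.1.8]{CGP15} applied to the ambient group $X$ already makes $U_X(-\lambda)\times Z_X(\lambda)\times U_X(\lambda)\to X$ an open immersion. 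The genuine issue is the opposite containment: one must show that the group $F$ \emph{generated} by the $F_a$ is no bigger than the proposed cell, i.e.\ that $U_F(\pm\lambda)=U^\pm$ and $Z_F(\lambda)=Z_F(S)=C$, and this is precisely where the hypothesis $[E_a,E_{-b}]=1$ for $a\neq b$ must do real work (without it, mixed products of positive and negative root groups for different simple roots can generate new elements centralising $S$ or new $S$-weights, and every clause of the theorem fails). Your sketch disposes of this by saying equality of the tangent map is ``a consequence of $F$ being generated by $C$ and the $E_{\pm a}$'', which is circular: bounding $\Lie(F)$ and its $S$-weights is the statement to be proved, not a consequence of the definition of $F$.

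A second, concrete error is the repeated appeal to iterated Lie brackets: in positive characteristic the Lie algebra of a subgroup generated by smooth subgroups is in general strictly larger than the Lie subalgebra generated by their Lie algebras, because group-level commutators can be nontrivial while the corresponding brackets vanish --- and characteristic $2$ with multipliable roots, the very setting of this paper, is the standard example. So the arguments you give for clauses (2) and (3) (``the outer summands arising from iterated commutator brackets among the $\Lie(U_{F_a}(\pm\lambda))$'', ``the $a$-weight space \dots cannot receive contributions from brackets'') do not establish those clauses; one needs the $S$-equivariant structure theory of the smooth connected unipotent groups $U^\pm$ (direct spanning by weight groups) rather than Lie-algebra generation. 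Finally, clause (4) needs an actual argument that a smooth connected normal unipotent $k$-subgroup of $F$ meets the root groups trivially and lies in $C$ (your one-line reduction hides nontrivial smoothness and normality issues for intersections with the $F_a$), and clause (6) cannot be settled by ``extending by density'': extending a map given on generators and on a big cell to a homomorphism of $F$ requires precisely the kind of careful cell-compatibility argument (or birational group law input) that \cite{CGP15} supplies. As it stands the proposal is a reasonable outline of the dynamic-method strategy, but the theorem's substance --- $Z_F(S)=C$, $U_F(\pm\lambda)=U^\pm$, and the weight control coming from the commutation hypothesis --- is asserted rather than proved.
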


We can now apply the theorem to the set-up described in the main body of the paper. 
In order to do this, we let $\QQ$ play the role of $X$, $D$ play the role of $S$, $C=\R_{K/k}(D_K)$ and $\Delta=\{a_1,\dots,a_n,2a_n\}$, a base of the full root system $\Phi=\Phi(\QQ,D)$.
Recall that all elements of the Weyl group $W$ of $\Phi$ can be represented by $k$-points of $N_{\QQ}(D)$; in particular, we have the reflections $s_i$ for each root $a_i$.
Let us set $F_{a_1}=\langle G_{a_1},G_{-a_1},C\rangle$, where $G_{-a_1}=G_{a_1}^{s_1}$, and for $2\leq i\leq n-1$, $F_{a_i}=F_{a_1}^w$ where $w\in W$ is such that $w\cdot a_1=a_i$. 
Additionally, put $F_{a_n}=\langle G_{a_n},G_{2a_n},G_{-a_n},G_{-2a_n}\rangle$ where $G_{-a_n}=G_{a_n}^{s_n}$ and $G_{-2a_n}=G_{2a_n}^{s_n}$.

We then wish to show that the smooth and connected $F_{a}$ have $Z_{F_{a}}(S)=C$ and that their root systems with respect to $S$ consist of $\pm a$ or $\pm a, \pm 2a$ if $a=a_n$. 
Once this is achieved, the remarks of Section \ref{pinningq} then imply that the root groups $E_a$ and $E_{-b}$ commute for distinct $a,b\in\Delta$.  Furthermore we need to show that each $F_a$ is pseudo-reductive, implying the same is true for $F$, by part (iv) of the theorem. 

Since the $s_i$ normalise $C$ and $S$, it clearly suffices to show the properties in the previous paragraph for the two cases $a=a_1$ and $a=a_n$. 
In the first case $a=a_1$, we may identify $F_a$ with the Weil restriction $\R_{K/k}(H)$, where $H$ is  a short Levi subgroup of the $K$-group $L_K\cong \Sp_{2m}$ with maximal torus $D_{K}$, and the required properties follow quickly. 

In the second case $a=a_n$, we can proceed as follows: let $F_a=\langle C,G_a,G_{2a},G_{-a},G_{-2a}\rangle$. Let $b\in \{a,2a\}$ and $c$ another root. Then the relations $x_b(v)h_c(u)=h_{c}(u)x_b(u^{\langle b,a^\vee\rangle}v)$ imply that $C(k)$ and hence also $C$ normalises $G_b$. (The interesting case is where $b=2a$, so that $c\in V'$; then $\langle b,a^\vee\rangle$ is always a multiple of $2$, so that $u^{\langle b,a^\vee\rangle}\in K^2$ and its product with $c$ remains in $V'$.) In particular, the root system of $F_a$ is $\{\pm a,\pm 2a\}$.
We also need to show that the root groups of $F_a$ are the $G_b$ and that $C$ is a Cartan subgroup of $F_a$. It will suffice to do so instead for the subgroup $H:=\langle G_a,G_{2a},G_{-a},G_{-2a},C'\rangle\subset \QQ_a$ where $\QQ_a$ is the group generated by the root groups $\MM_{\pm a}$ and $\LL_{\pm a}$; more specifically, to show that the root groups of $H$ are the $G_b$ and that $C':=H\cap C$ is a Cartan subgroup of $H$.

Let $\{e_i,f_i\mid 1\leq i\leq n\}$ be a symplectic basis for $U$ according to the form stabilised by $L$. The centraliser of the subspace $\langle e_2,f_2,\dots, e_n,f_n\rangle$ is a Levi subgroup of $L$ whose derived subgroup $A$ is isomorphic to $\Sp_2\cong \SL_2$ and for which $U'=\langle e_1,f_1\rangle\subseteq U$ affords the structure of a natural module for $A$. Now $\QQ_a=\R_{E/k}(U'\rtimes A)$. Since $A\subseteq \QQ_a$ we have reduced the problem to treating the case $n=1$; that is, it suffices to do our calculations in the special case $\QQ = \QQ_a$. 
The most straightforward way to proceed is by considering $3\times 3$ matrices arising from the embedding of $\QQ$ in a parabolic of $\SL_3$---see the discussion immediately after Proposition \ref{sdpprop}.

Explicitly, from the definition of $M\subseteq Q=U\rtimes L$, the $k$-points of the root groups in $\QQ$ are easily seen to be
\[x_+(t)=\begin{bmatrix}1 & t & 0\\0 & 1 & 0\\0 & 0 & 1\end{bmatrix},\qquad x_-(t)=\begin{bmatrix}1 & 0 & 0\\t & 1 & 0\\0 & 0 & 1\end{bmatrix}\]
\[y_+(t)=\begin{bmatrix}1 & t^2 & 0\\0 & 1 & 0\\0 & t & 1\end{bmatrix},\qquad
y_-(t)=\begin{bmatrix}1 & 0 & 0\\t^2 & 1 & 0\\t & 0 & 1\end{bmatrix}\]
where $\LL=\langle x_+(t),x_-(t)\mid t\in E\rangle$ and  $\MM=\langle y_+(t),y_-(t)\mid t\in E\rangle$. 

Here, the $k$-points of $\mathsf{U}$ identify with the set of matrices
\[\left\{\begin{bmatrix}1 & 0 & 0\\0 & 1 & 0\\t & u & 1\end{bmatrix}\, \middle\vert\, t,u\in E\right\}.\] 
We are therefore led to consider the subgroup 
$$
\mathscr{H}=\langle x_+(t),y_+(u),x_-(t),y_-(u)\mid t\in V',u\in V\rangle\subset H.
$$ 
Note also that the map $\pi$ induces the map on $k$-points
\[\pi(k):\begin{bmatrix}a & b & 0\\c & d & 0\\t & u & e\end{bmatrix}\to \begin{bmatrix}a & b \\ c&d\end{bmatrix}.\]
It is also straightforward to write down the elements of $C(k)$ as diagonal matrices, and to realise the nontrivial element of the Weyl group: it is represented by the element
$$
s=\begin{bmatrix}0 & 1 & 0\\1 & 0 & 0\\0 & 0 & 1\end{bmatrix}\in N_{\QQ}(C)(k).
$$
Now the proofs in the main paper can be carried out with explicit matrix representatives, as follows.
Set $N=\langle s,C\rangle$, $\BB=\langle C,x_+(t),y_+(u)\rangle$, and a general element of $\BB(k)$ is \[\mathsf{b}=
\begin{bmatrix}a &0 &0\\0 &1/a &0\\ 0 &0 &1\end{bmatrix}
\begin{bmatrix}1 &t+u^2 &0\\0 &1 &0\\ 0 &u &1\end{bmatrix}=\begin{bmatrix}a &a(t+u^2) &0\\0 &1/a &0\\ 0 &u &1\end{bmatrix}.\] Now the only calculation remaining is that to show $(\BB(k)s\BB(k))\cdot (\BB(k)s\BB(k)) \subseteq \BB(k) \sqcup \BB(k)s\BB(k)$ which is easily reduced to showing $s\BB(k) s\subseteq \BB(k) s\BB(k)$. Now by matrices:
\[s\mathsf{b}s=
  \begin{bmatrix}0 & 1 & 0\\1 & 0 & 0\\0 & 0 & 1\end{bmatrix}
  \begin{bmatrix}a &a(t+u^2) &0\\0 &1/a &0\\ 0 &u &1\end{bmatrix}
  \begin{bmatrix}0 & 1 & 0\\1 & 0 & 0\\0 & 0 & 1\end{bmatrix}
  =
  \begin{bmatrix}1/a & 0 &0\\a(t+u^2) &a &0\\ u & 0 &1\end{bmatrix}\]
  If $t=u=0$ then $\mathsf{b}\in C(k)$ and hence so is $s\mathsf{b}s={}^s\mathsf{b}\in C(k)$. Otherwise the following check can be made to exhibit $s\mathsf{b}s\in \BB(k)s\BB(k)$:
  \[s\mathsf{b}s = 
  \begin{bmatrix}1/(a(t+u^2)) & 1 &0\\0 & a(t+u^2) &0\\ 0 &u &1\end{bmatrix}
  \begin{bmatrix}0 & 1 & 0\\1 & 0 & 0\\0 & 0 & 1\end{bmatrix}
  \begin{bmatrix}1 & 1/(t+u^2) & 0\\0 & 1 & 0\\0 & u/(t+u^2) & 1\end{bmatrix}
  .\]
  With this established, the argument of Lemma \ref{rootgroupswork} goes through.

We finally note that, in the case $n=2$,  the
additional cases described in Proposition \ref{prop:othertype} can also be constructed inside $\QQ$ using Theorem \ref{CGPC.2.29}.
The key observation is that when $n=2$ we have simple roots $a_1$ (short), $a_2$ (very short), and $2a_2$ (long), and the new groups arise by modifying the subgroups corresponding to the root $a_1$.
In the notation established above, the subgroup $F_{a_1}\cong \R_{K/k}(H)$ has semisimple part isomorphic to $\SL_2$.
Inside a group of the form $\R_{K/k}(\SL_2)$ we can find a $k$-subgroup by generating with the vector subgroup of each root group whose $k$-points are the space $V''$ (these are the subgroups of type $A_1$ denoted by $H_{V''}$ in \cite{CGP15});
this process is described in detail in \cite[pp.\,384--386]{CGP15}.
The upshot is that we can replace $F_{a_1}$ with a different group $F_{a_1}''$ corresponding to the space $V''$, and then $F_{a_2}$ can be dealt with using $3\times 3$ matrices as above, allowing us to apply Theorem \ref{CGPC.2.29} to generate these extra cases.


\subsection*{Conflicts of interest.} The authors declare that they have no conflict of interest.

{\footnotesize
\bibliographystyle{amsalpha}
\bibliography{bib}}

\end{document}